\newcommand{\re}{\mathbb{R}}
\newcommand{\cpx}{\mathbb{C}}
\newcommand{\N}{\mathbb{N}}
\newcommand{\diag}{\mbox{diag}}
\newcommand{\half}{\frac{1}{2}}
\newcommand{\lmd}{\lambda}
\newcommand{\eps}{\epsilon}
\def\af{\alpha}
\def\rank{\mbox{rank}}
\newcommand{\sig}{\sigma}
\newcommand{\Sig}{\Sigma}
\newcommand{\mt}[1]{\mathtt{#1}}
\newcommand{\reff}[1]{(\ref{#1})}
\newcommand{\prm}{\prime}
\newcommand{\mbf}[1]{\mathbf{#1}}
\newcommand{\mc}[1]{\mathcal{#1}}
\newcommand{\supp}[1]{\mbox{supp}(#1)}
\newcommand{\bdes}{\begin{description}}
\newcommand{\edes}{\end{description}}
\newcommand{\bal}{\begin{align}}
\newcommand{\eal}{\end{align}}
\newcommand{\bnum}{\begin{enumerate}}
\newcommand{\enum}{\end{enumerate}}
\newcommand{\bit}{\begin{itemize}}
\newcommand{\eit}{\end{itemize}}
\newcommand{\bea}{\begin{eqnarray}}
\newcommand{\eea}{\end{eqnarray}}
\newcommand{\be}{\begin{equation}}
\newcommand{\ee}{\end{equation}}
\newcommand{\baray}{\begin{array}}
\newcommand{\earay}{\end{array}}
\newcommand{\bsry}{\begin{subarray}}
\newcommand{\esry}{\end{subarray}}
\newcommand{\bca}{\begin{cases}}
\newcommand{\eca}{\end{cases}}
\newcommand{\bcen}{\begin{center}}
\newcommand{\ecen}{\end{center}}
\newcommand{\bbm}{\begin{bmatrix}}
\newcommand{\ebm}{\end{bmatrix}}
\newcommand{\bmx}{\begin{matrix}}
\newcommand{\emx}{\end{matrix}}
\newcommand{\bpm}{\begin{pmatrix}}
\newcommand{\epm}{\end{pmatrix}}
\newcommand{\btab}{\begin{tabular}}
\newcommand{\etab}{\end{tabular}}
\newtheorem{theorem}{Theorem}[section]
\newtheorem{pro}[theorem]{Proposition}
\newtheorem{prop}[theorem]{Proposition}
\newtheorem{ass}[theorem]{Assumption}
\newtheorem{defi}[theorem]{Definition}
\theoremstyle{definition}
\newtheorem{exm}[theorem]{Example}
\numberwithin{equation}{section}
\begin{document}

\title[Tight Relaxations and Lagrange Multiplier Expressions]
{Tight Relaxations for Polynomial Optimization
and Lagrange Multiplier Expressions}

\author{Jiawang Nie}
\address{Department of Mathematics,
University of California at San Diego,
9500 Gilman Drive, La Jolla, CA, USA, 92093.}
\email{njw@math.ucsd.edu}

\subjclass[2010]{65K05, 90C22, 90C26}

\keywords{Lagrange multiplier, Lasserre relaxation, tight relaxation,
polynomial optimization, critical point}

\begin{abstract}
This paper proposes tight semidefinite relaxations for polynomial optimization.
The optimality conditions are investigated.
We show that generally Lagrange multipliers can be
expressed as polynomial functions in decision variables
over the set of critical points.
The polynomial expressions is determined by linear equations.
Based on these expressions, new Lasserre type semidefinite relaxations
are constructed for solving the polynomial optimization.
We show that the hierarchy of new relaxations has finite convergence,
or equivalently, the new relaxations are tight for a finite relaxation order.
\end{abstract}

\maketitle

\section{Introduction}

A general class of optimization problems is
\be \label{pop:c(x)>=0}
\left\{\baray{rl}
f_{\min} :=  \min & f(x)  \\
 s.t. &  c_i(x) = 0 \, ( i \in \mc{E} ), \\
  &  c_j(x) \geq 0 \, ( j \in \mc{I} ),
\earay \right.
\ee
where $f$ and all $c_i, c_j$ are polynomials
in $x :=(x_1, \ldots, x_n)$, the real decision variable. The $\mc{E}$ and $\mc{I}$
are two disjoint finite index sets of constraining polynomials.
Lasserre's relaxations \cite{Las01} are generally used
for solving \reff{pop:c(x)>=0} globally,
i.e., to find the global minimum value $f_{\min}$ and minimizer(s) if any.
The convergence of Lasserre's relaxations is related
to optimality conditions.

\subsection{Optimality conditions}
\label{ssc:opcd}

A general introduction of optimality conditions
in nonlinear programming can be found in \cite[Section~3.3]{Brks}.
Let $u$ be a local minimizer of \reff{pop:c(x)>=0}.
Denote the index set of active constraints
\be \label{df:J(u)}
J(u) \, := \, \{i \in \mc{E}\cup \mc{I} \, \mid \, c_i(u) = 0\}.
\ee
If the {\it constraint qualification condition (CQC)} holds at $u$,
i.e., the gradients $\nabla c_i(u)$ \, $(i \in J(u))$
are linearly independent ($\nabla$ denotes the gradient), then
there exist Lagrange multipliers
$\lmd_i \, (i \in \mc{E} \cup \mc{I})$ satisfying
\be  \label{opcd:one}
\nabla f(u) =  \sum_{ i \in \mc{E} \cup \mc{I}  }
\lmd_i \nabla c_i(u),
\ee
\be \label{opcd:two}
 \quad
c_i(u) = 0 \, (i \in \mc{E}), \quad
\lmd_j c_j(u) = 0 \, (j \in \mc{I}),
\ee
\be \label{opcd:three}
c_j(u) \geq 0 \, (j \in \mc{I}), \quad
\lmd_j \geq 0 \, (j \in \mc{I}).
\ee
The second equation in \reff{opcd:two}
is called the {\it complementarity condition}.
If $\lmd_j + c_j(u) >0$ for all $j \in \mc{I}$,
the {\it strict complementarity condition} (SCC) is said to hold.
For the $\lmd_i$'s satisfying
\reff{opcd:one}-\reff{opcd:three},
the associated Lagrange function is
\[
\mathscr{L}(x) := f(x) -  \sum_{ i \in \mc{E} \cup \mc{I}  }
\lmd_i c_i(x).
\]
Under the constraint qualification condition,
the {\it second order necessary condition} (SONC) holds at $u$,
i.e., ($\nabla^2$ denotes the Hessian)
\be \label{opt:sonc}
v^T \Big( \nabla^2 \mathscr{L}(u) \Big) v \geq 0 \quad  \mbox{ for all } \,
v \in  \bigcap_{i\in J(u)} \nabla c_i(u)^\perp.
\ee
Here, $\nabla c_i(u)^\perp$ is the orthogonal complement of
$\nabla c_i(u)$.
If it further holds that
\be \label{sosc-optcnd}
v^T \Big( \nabla^2 \mathscr{L}(u) \Big) v > 0 \quad  \mbox{ for all } \,
0 \ne v  \in \bigcap_{i\in J(u)} \nabla c_i(u)^\perp,
\ee
%
%In \reff{opt:sonc}, if the strict inequality holds for all nonzero
%$v$ there,
%
then the {\it second order sufficient condition} (SOSC) is said to hold.
If the constraint qualification condition holds at $u$,
then \reff{opcd:one}, \reff{opcd:two} and \reff{opt:sonc}
are necessary conditions for $u$ to be a local minimizer.
If \reff{opcd:one}, \reff{opcd:two}, \reff{sosc-optcnd}
and the strict complementarity condition
hold, then $u$ is a strict local minimizer.

\subsection{Some existing work}

Under the archimedean condition (see \S\ref{sc:pre}),
the hierarchy of Lasserre's relaxations converges asymptotically \cite{Las01}.
Moreover, in addition to the archimedeanness,
if the constraint qualification, strict complementarity,
and second order sufficient conditions hold at every global minimizer,
then the Lasserre's hierarchy converges in finitely many steps \cite{Nie-opcd}.
For convex polynomial optimization, the Lasserre's hierarchy
has finite convergence under the strict convexity
or sos-convexity condition \cite{dKlLau11,Las09}.
For unconstrained polynomial optimization,
the standard sum of squares relaxation was proposed in \cite{PS03}.
When the equality constraints
define a finite set, the Lasserre's hierarchy
also has finite convergence, as shown in \cite{LLR08,Lau07,Nie-rVar}.
Recently, a bounded degree hierarchy of relaxations
was proposed for solving polynomial optimization~\cite{LTY15}.
General introductions to polynomial optimization and moment problems
can be found in the books and surveys \cite{LasBok,LBK15,Lau,LauICM,Sch09}.
Lasserre's relaxations provide lower bounds for the minimum value.
There also exist methods that compute upper bounds \cite{dKLLS15,Las11}.
A convergence rate analysis for such upper bounds is given in \cite{dKLS16,dKHL16}.
When a polynomial optimization problem does not have minimizers
(i.e., the infimum is not achievable), there are relaxation methods for computing
the infimum \cite{Schw06,VuiSon}.

A new type of Lasserre relaxations, based on Jacobian representations,
were recently proposed in \cite{Nie-jac}.
The hierarchy of such relaxations always has finite convergence,
when the tuple of constraining polynomials is {\it nonsingular}
(i.e., at every point in $\cpx^n$,
the gradients of active constraining polynomial are linearly independent;
see Definition~\ref{def:nonsig}).
When there are only equality constraints
$c_1(x)=\cdots = c_m(x)=0$, the method needs
the maximal minors of the matrix
\[
\bbm  \nabla f(x)  &  \nabla c_1(x) & \cdots & \nabla c_m(x) \ebm.
\]
When there are inequality constraints,
it requires to enumerate all possibilities of active constraints.
%This also produces a large number of new polynomial equations
%with high degrees.
The method in \cite{Nie-jac} is expensive when there are a lot of constraints.
For unconstrained optimization, it is reduced to the gradient
sum of squares relaxations in \cite{NDS}.

\subsection{New contributions}

When Lasserre's relaxations are used to solve polynomial optimization,
the following issues are typically of concerns:
\bit

\item The convergence depends on the archimedean condition (see \S\ref{sc:pre}),
which is satisfied only if the feasible set is compact.
If the set is noncompact, how can we get convergent relaxations?

\item The cost of Lasserre's relaxations depends significantly on the relaxation order.
For a fixed order, can we construct tighter
relaxations than the standard ones?

\item When the convergence of Lasserre's relaxations is slow,
can we construct new relaxations
whose convergence is faster?

\item When the optimality conditions fail to hold,
the Lasserre's hierarchy might not have finite convergence.
Can we construct a new hierarchy of stronger relaxations
that also has finite convergence for such cases?

\eit

This paper addresses the above issues.
We construct tighter relaxations by using optimality conditions.
In \reff{opcd:one}-\reff{opcd:two}, under the constraint qualification condition,
the Lagrange multipliers $\lmd_i$ are uniquely determined by $u$.
%
%Note that $(u,\lmd)$ is a solution to
%the polynomial system in $(x,\lmd)$:
%
Consider the polynomial system in $(x,\lmd)$:
\be \label{eq:cfx:lmd}
\sum_{ i \in \mc{E} \cup \mc{I}  }
\lmd_i \nabla c_i(x) = \nabla f(x), \,\,
c_i(x) = 0 \, (i \in \mc{E}),  \,\,
\lmd_j c_j(x) = 0 \, (j \in \mc{I}).
\ee
A point $x$ satisfying \reff{eq:cfx:lmd} is called a {\it critical point},
and such $(x,\lmd)$ is called a critical pair.
In \reff{eq:cfx:lmd}, once $x$ is known,
$\lmd$ can be determined by linear equations.
Generally, the value of $x$ is not known.
One can try to express $\lmd$ as a rational function in $x$.
Suppose $\mc{E}\cup \mc{I} = \{1,\ldots, m\}$ and denote
\[
G(x) := \bbm \nabla c_1(x) & \cdots & \nabla c_m(x) \ebm.
\]
When $m\leq n$ and $\rank\,G(x)=m$,
we can get the rational expression
\be \label{lmd:rat:fun}
\lmd = \big( G(x)^TG(x) \big)^{-1} G(x)^T \nabla f(x).
\ee
Typically, the matrix inverse $\big( G(x)^TG(x) \big)^{-1}$
is expensive for usage. The denominator
$\det \big( G(x)^TG(x) \big)$ is typically a high degree polynomial.
When $m>n$,  $G(x)^TG(x)$ is always singular
and we cannot express $\lmd$ as in \reff{lmd:rat:fun}.

Do there exist polynomials $p_i$ ($i \in \mc{E} \cup \mc{I}$)
such that each
\be \label{lmdi=pi}
\lmd_i = p_i(x)
\ee
for all $(x,\lmd)$ satisfying \reff{eq:cfx:lmd}?
If they exist, then we can do:

\bit

\item The polynomial system \reff{eq:cfx:lmd}
can be simplified to
\be \label{kkt:cfx}
\sum_{ i \in \mc{E} \cup \mc{I}  }
p_i(x) \nabla c_i(x) = \nabla f(x), \,\,
c_i(x) =  0 (i \in \mc{E}), \,  \,
p_j(x) c_j(x) = 0  (j \in \mc{I}).
\ee

\item For each $j \in \mc{I}$, the sign condition
$\lmd_j \geq 0$ is equivalent to
\be \label{kkt:lmd>=0}
p_j(x) \geq 0.
\ee

\eit
The new conditions \reff{kkt:cfx} and \reff{kkt:lmd>=0}
are only about the variable $x$, not $\lmd$.
They can be used to construct tighter relaxations
for solving \reff{pop:c(x)>=0}.

When do there exist polynomials $p_i$ satisfying \reff{lmdi=pi}?
If they exist, how can we compute them?
How can we use them to construct tighter relaxations?
Do the new relaxations have advantages over the old ones?
These questions are the main topics of this paper.
Our major results are:

\bit

\item We show that the polynomials $p_i$ satisfying \reff{lmdi=pi}
always exist when the tuple of constraining polynomials is nonsingular
(see Definition~\ref{def:nonsig}). Moreover, they can be
determined by linear equations.

\item Using the new conditions \reff{kkt:cfx}-\reff{kkt:lmd>=0},
we can construct tight relaxations for solving \reff{pop:c(x)>=0}.
To be more precise, we construct a hierarchy of
new relaxations, which has finite convergence.
This is true even if the feasible set is noncompact and/or
the optimality conditions fail to hold.

\item For every relaxation order, the new relaxations
are tighter than the standard ones in the prior work.

\eit

The paper is organized as follows.
Section~\ref{sc:pre} reviews some basics in polynomial optimization.
Section~\ref{sc:tgtrlx} constructs new relaxations
and proves their tightness.
Section~\ref{sc:PH} characterizes when the polynomials
$p_i$'s satisfying \reff{lmdi=pi} exist
and shows how to determine them, for polyhedral constraints.
Section~\ref{sc:gencon} discusses the case of general nonlinear constraints.
Section~\ref{sc:exm} gives examples of using the new relaxations.
Section~\ref{sc:dis} discusses some related issues.

\section{Preliminaries}
\label{sc:pre}

\noindent
{\bf Notation}
The symbol $\N$ (resp., $\re$, $\cpx$) denotes the set of
nonnegative integral (resp., real, complex) numbers.
The symbol $\re[x] := \re[x_1,\ldots,x_n]$
denotes the ring of polynomials in $x:=(x_1,\ldots,x_n)$
with real coefficients.
The $\re[x]_d$ stands for the set of real polynomials
with degrees $\leq d$. Denote
\[
\N^n_d :=\{ \af:= (\af_1, \ldots, \af_n) \in \N^n \mid
|\af|:=\af_1+\cdots+\af_n \leq d \}.
\]
For a polynomial $p$, $\deg(p)$ denotes its total degree.
For $t \in \re$, $\lceil t \rceil$ denotes
the smallest integer $\geq t$.
For an integer $k>0$, denote
$
[k] \, := \, \{1,2,\ldots, k\}.
$
For $x =(x_1,\ldots, x_n)$ and $\af = (\af_1, \ldots, \af_n)$, denote
\[
x^\af \, := \, x_1^{\af_1} \cdots x_n^{\af_n}, \quad
[x]_{d} := \bbm 1 & x_1 &\cdots & x_n & x_1^2 & x_1x_2 & \cdots & x_n^{d}\ebm^T.
\]
The superscript $^T$ denotes the transpose of a matrix/vector.
The $e_i$ denotes the $i$th standard unit vector,
while $e$ denotes the vector of all ones.
The $I_m$ denotes the $m$-by-$m$ identity matrix.
By writing $X\succeq 0$ (resp., $X\succ 0$), we mean that
$X$ is a symmetric positive semidefinite (resp., positive definite) matrix.
For matrices $X_1,\ldots, X_r$, $\diag(X_1, \ldots, X_r)$
denotes the block diagonal matrix whose diagonal blocks
are $X_1,\ldots, X_r$. In particular, for a vector $a$,
$\diag(a)$ denotes the diagonal matrix whose
diagonal vector is $a$. For a function $f$ in $x$,
$f_{x_i}$ denotes its partial derivative with respect to $x_i$.

We review some basics in computational algebra
and polynomial optimization.
They could be found in \cite{CLO97,LasBok,LBK15,Lau,LauICM}.
An ideal $I$ of $\re[x]$ is a subset
such that $ I \cdot \re[x] \subseteq I$
and $I+I \subseteq I$. For a tuple $h := (h_1,\ldots,h_m)$ of polynomials,
$\mbox{Ideal}(h)$ denotes the smallest ideal containing all $h_i$,
which is the set
\[
h_1 \cdot \re[x] + \cdots + h_m  \cdot \re[x].
\]
The $2k$th {\it truncation} of $\mbox{Ideal}(h)$ is the set
\[
\mbox{Ideal}(h)_{2k}  \, := \,
h_1 \cdot \re[x]_{2k-\deg(h_1)} + \cdots + h_m  \cdot \re[x]_{2k-\deg(h_m)}.
\]
The truncation $\mbox{Ideal}(h)_{2k}$ depends on
the generators $h_1,\ldots, h_m$.
%
%\[
%\langle h \rangle_{k}: =
%\left\{ \sum_{i=1}^m \phi_i h_i:\,
%\phi_i \in \re[x]   \mbox{ and } \deg (\phi_i h_i ) \leq k
%\mbox{ for all } i
%\right\}.
%\]
For an ideal $I$,
its complex and real varieties are respectively defined as
\begin{align*}
\mc{V}_{\cpx}(I) := \{v\in \cpx^n \mid \, p(v) = 0 \, \forall \, p \in I \}, \quad
\mc{V}_{\re}(I)  :=  \mc{V}_{\cpx}(I) \cap \re^n.
\end{align*}

A polynomial $\sig$ is said to be a sum of squares (SOS)
if $\sig = s_1^2+\cdots+ s_k^2$ for some polynomials $s_1,\ldots, s_k \in \re[x]$.
The set of all SOS polynomials in $x$ is denoted as $\Sig[x]$.
For a degree $d$, denote the truncation
\[
\Sig[x]_d := \Sig[x] \cap \re[x]_d.
\]
For a tuple $g=(g_1,\ldots,g_t)$,
its {\it quadratic module} is the set
\[
\mbox{Qmod}(g):=  \Sig[x] + g_1 \cdot \Sig[x] + \cdots + g_t \cdot \Sig[x].
\]
The $2k$th truncation of $\mbox{Qmod}(g)$ is the set
\[
\mbox{Qmod}(g)_{2k} \, := \,
\Sig[x]_{2k} + g_1 \cdot \Sig[x]_{2k - \deg(g_1)}
+ \cdots + g_t \cdot \Sig[x]_{2k - \deg(g_t)}.
\]
The truncation $\mbox{Qmod}(g)_{2k}$ depends on
the generators $g_1,\ldots, g_t$. Denote
\be \label{df:IQ(h,g)}
\left\{ \baray{lcl}
\mbox{IQ}(h,g) &:=& \mbox{Ideal}(h) + \mbox{Qmod}(g), \\
\mbox{IQ}(h,g)_{2k} &:=& \mbox{Ideal}(h)_{2k} + \mbox{Qmod}(g)_{2k}.
\earay \right.
\ee
The set $\mbox{IQ}(h,g)$ is said to be {\it archimedean}
if there exists $p \in \mbox{IQ}(h,g)$ such that
$p(x) \geq 0$ defines a compact set in $\re^n$.
If $\mbox{IQ}(h,g)$ is archimedean, then
\[
K \, := \, \{x \in \re^n \mid h(x)=0, \, g(x)\geq 0 \}
\]
must be a compact set. Conversely, if $K$ is compact, say, $K \subseteq B(0,R)$
(the ball centered at $0$ with radius $R$), then
%$K =\{ h(x)=0, \, g(x) \geq 0,\, R^2-x^Tx \geq 0 \}$,
$\mbox{IQ}(h, (g, R^2-x^Tx) )$ is always archimedean
and $h=0, \, (g, R^2-x^Tx) \geq 0$
give the same set $K$.

\begin{theorem}[Putinar \cite{Put}]\label{thm:PutThm}
Let $h,g$ be tuples of polynomials in $\re[x]$.
Let $K$ be as above. Assume $\mbox{IQ}(h,g)$ is archimedean.
If a polynomial $f\in \re[x]$ is positive on $K$,
then $f\in \mbox{IQ}(h,g)$.
\end{theorem}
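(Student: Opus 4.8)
The plan is to prove the contrapositive by a functional-analytic separation argument. Write $M := \mbox{IQ}(h,g)$ for the quadratic module in question. I will show that if $f \notin M$ then $f$ must take a nonpositive value somewhere on $K$. The two ingredients are: (i) a separation theorem, applicable because the archimedean hypothesis forces $1$ to be an internal (core) point of the convex cone $M$; and (ii) a representation theorem identifying every linear functional that is nonnegative on $M$ and sends $1$ to $1$ with integration against a Borel probability measure supported on $K$.

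First I would record the internal-point property of $1$. Since $M$ is archimedean, for every $q \in \re[x]$ there is $N>0$ with $N \pm q \in M$; because $M$ is a convex cone containing $\Sig[x]$ (hence all nonnegative constants), dividing by $N$ gives $1 \pm q/N \in M$, and convexity then yields $1 + tq \in M$ for all $|t|\le 1/N$. Thus $1$ lies in the algebraic interior of $M$. Now assume $f\notin M$. By the Eidelheit separation theorem (the form of Hahn--Banach valid for a convex set possessing an internal point), there is a nonzero linear functional $L\colon \re[x]\to\re$ with $L\ge 0$ on $M$ and $L(f)\le 0$ (for a cone, the separating inequality forces $L$ nonnegative on $M$ with infimum $0$). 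If $L(1)=0$, then $L(1+tq)\ge 0$ for all small $|t|$ and both signs of $t$ would give $L(q)=0$ for every $q$, contradicting $L\neq 0$; hence $L(1)>0$, and after rescaling we may take $L(1)=1$.

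Next comes the representation step, via a Gelfand--Naimark--Segal type construction. On $\re[x]$ define the bilinear form $\langle p,q\rangle := L(pq)$; it is positive semidefinite, since for $s\in\re[x]$ one has $s^2\in\Sig[x]\subseteq M$, so $\langle s,s\rangle = L(s^2)\ge 0$. Quotienting by the kernel and completing yields a Hilbert space $\mH$ with a cyclic vector $\xi$ represented by the constant $1$. Multiplication by $x_i$ descends to a densely defined symmetric operator $X_i$; archimedeanness gives $N-x_i^2\in M$ for suitable $N$, so $\langle(N-x_i^2)p,p\rangle\ge 0$, i.e. $\|X_ip\|^2\le N\|p\|^2$, whence each $X_i$ extends to a bounded self-adjoint operator, and the $X_i$ pairwise commute. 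The joint spectral theorem furnishes a spectral measure $E$ on a compact box $B\subseteq\re^n$ with $X_i=\int_B t_i\,dE(t)$; setting $\mu(\cdot):=\langle E(\cdot)\xi,\xi\rangle$ gives a Borel probability measure on $B$ with $L(p)=\int_B p\,d\mu$ for all $p\in\re[x]$. Since $h_i\cdot\re[x]\subseteq M\cap(-M)$, taking $p=h_i$ gives $\int h_i^2\,d\mu=0$, so $h_i=0$ on $\supp{\mu}$; since $g_j\cdot\Sig[x]\subseteq M$, we get $\int g_j p^2\,d\mu\ge 0$ for all $p$, which by a routine approximation argument (polynomials are dense in $L^2(\mu)$, $\mu$ compactly supported) forces $g_j\ge 0$ on $\supp{\mu}$. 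Hence $\supp{\mu}\subseteq K$.

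Finally, combining $L(f)\le 0$ with the representing measure yields $\int_K f\,d\mu\le 0$; but $f>0$ on the compact set $K$ and $\mu$ is a probability measure supported on $K$, so $\int_K f\,d\mu\ge(\min_K f)\cdot\mu(K)>0$, a contradiction. Therefore $f\in M=\mbox{IQ}(h,g)$, as claimed. I expect the delicate part to be the spectral step: one must check that the multiplication operators are genuinely self-adjoint (here painless, thanks to the archimedean bounds) and that the joint spectral measure is supported exactly on $K$; the other subtlety is invoking the correct infinite-dimensional separation theorem, which is precisely what the archimedean hypothesis makes available.
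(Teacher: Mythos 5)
The paper does not actually prove this theorem; it is Putinar's Positivstellensatz, stated as a cited prerequisite and attributed to \cite{Put}. So there is no in-paper argument to compare against. That said, your proposal is a correct and essentially complete rendition of the standard operator-theoretic proof, which is in fact close in spirit to Putinar's original argument: separate $f$ from the cone $M=\mbox{IQ}(h,g)$ using that $1$ is an algebraic interior point (forced by archimedeanness), normalize the separating functional $L$ to a state, run the GNS construction to realize $L$ as $\langle \,\cdot\, \xi,\xi\rangle$ with bounded commuting self-adjoint multiplication operators (boundedness again from archimedeanness via $N-x_i^2\in M$ and the fact that $M$ is closed under multiplication by $\Sig[x]$), and invoke the joint spectral theorem to produce a representing probability measure $\mu$; then $\supp{\mu}\subseteq K$ because $L$ kills $\mbox{Ideal}(h)$ and is nonnegative on $g_j\cdot\Sig[x]$, which contradicts $L(f)\le 0$ and $f>0$ on $K$. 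Two small remarks, neither a genuine gap: your last sentence says the spectral measure is ``supported exactly on $K$,'' but your argument (correctly) only establishes and only needs $\supp{\mu}\subseteq K$; and the ``routine approximation argument'' for $g_j\ge 0$ on $\supp{\mu}$ is cleanest via Stone--Weierstrass on the compact set $\supp{\mu}$ (uniformly approximate a suitable continuous bump $\varphi$ by polynomials $p_n$, so $\int g_j p_n^2\,d\mu\to\int g_j\varphi^2\,d\mu<0$), rather than via $L^2$-density alone, since $L^2$-convergence of $p_n$ does not by itself control $\int g_j p_n^2\,d\mu$. With those clarifications the proof is sound.
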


Interestingly, if $f$ is only nonnegative on $K$ but standard optimality
conditions hold (see Subsection~\ref{ssc:opcd}), then we still have
$f \in \mbox{IQ}(h,g)$ \cite{Nie-opcd}.

Let $\re^{\N_d^n}$ be the space of real multi-sequences indexed by
$\af \in \N^n_d$. A vector in $\re^{\N_d^n}$ is called a
{\it truncated multi-sequence (tms)} of degree $d$.
A tms $y :=(y_\af)_{ \af \in \N_d^n}$ gives the Riesz functional
$\mathscr{R}_y$ acting on $\re[x]_d$ as
{\small
\be
\mathscr{R}_y\Big(\sum_{\af \in \N_d^n}
f_\af x^\af  \Big) := \sum_{\af \in \N_d^n}  f_\af y_\af.
\ee
} \noindent
For $f \in \re[x]_d$ and $y \in \re^{\N_d^n}$, we denote
\be \label{df:<p,y>}
\langle f, y \rangle := \mathscr{R}_y(f).
\ee
%%%%%%%%%%%%%%%%%%%%
\iffalse

We say that $y$ admits a representing measure supported in a set $T$
if there exists a Borel measure $\mu$ such that
its support $\supp{\mu} \subseteq T$ and
$y_\af = \int  x^\af \mt{d} \mu$
for all $\af \in \N_d^n$.

\fi
%%%%%%%%%%%%%%%%%%%%%%%%%%%%%%
Let $q \in \re[x]_{2k}$.
The $k$th {\it localizing matrix} of $q$,
generated by $y \in \re^{\N^n_{2k}}$,
is the symmetric matrix $L_q^{(k)}(y)$ such that
\be  \label{LocMat}
vec(a_1)^T \Big( L_q^{(k)}(y) \Big) vec(a_2)  = \mathscr{R}_y(q a_1 a_2)
\ee
for all $a_1,a_2 \in \re[x]_{k - \lceil \deg(q)/2 \rceil}$.
(The $vec(a_i)$ denotes the coefficient vector of $a_i$.)
When $q = 1$, % (the constant one polynomial),
$L_q^{(k)}(y)$ is called a {\it moment matrix} and we denote
\be \label{MomMat}
M_k(y):= L_{1}^{(k)}(y).
\ee
The columns and rows of $L_q^{(k)}(y)$, as well as $M_k(y)$,
are indexed by $\af \in \N^n$ with $2|\af| + \deg(q) \leq 2k$.
When $q = (q_1, \ldots, q_r)$ is a tuple of polynomials, we define
\be  \label{block:LM}
L_q^{(k)}(y) \, := \, \mbox{diag}\Big(
L_{q_1}^{(k)}(y), \ldots,  L_{q_r}^{(k)}(y)
\Big),
\ee
a block diagonal matrix.
For the polynomial tuples $h,g$ as above, the set
\be
\mathscr{S}(h,g)_{2k} :=
\Big \{
\left. y \in \re^{ \N_n^{2k} } \right|
L_h^{(k)}(y) =0, \,  L_g^{(k)}(y) \succeq 0
\Big \}
\ee
is a spectrahedral cone in $\re^{ \N_n^{2k} }$.
The set $\mbox{IQ}(h,g)_{2k}$
is also a convex cone in $\re[x]_{2k}$.
The dual cone of $\mbox{IQ}(h,g)_{2k}$
is precisely $\mathscr{S}(h,g)_{2k}$ \cite{LBK15,Lau,Nie-LinOpt}.
This is because $\langle p, y \rangle \geq 0$
for all $p \in  \mbox{IQ}(h,g)_{2k}$
and for all $y \in \mathscr{S}(h,g)_{2k}$.

\section{The construction of tight relaxations}
\label{sc:tgtrlx}

Consider the polynomial optimization problem \reff{pop:c(x)>=0}.
Let
\[
\lmd :=(\lmd_i)_{ i \in \mc{E}\cup\mc{I} }
\]
be the vector of Lagrange multipliers.
Denote the set
\be \label{kkt:ffifj}
\mc{K} \, := \, \left\{
(x, \lmd) \in \re^n  \times \re^{ \mc{E} \cup  \mc{I} }
\left| \baray{c}
c_i(x) = 0 (i \in \mc{E}), \,  \lmd_j c_j(x) = 0\, (j \in \mc{I})   \\
\nabla f(x) = \sum \limits_{ i \in \mc{E} \cup \mc{I} }
\lmd_i \nabla c_i(x)
\earay \right.
\right \}.
\ee
Each point in $\mc{K}$ is called a critical pair.
The projection
\be  \label{set:Kc}
\mc{K}_c :=  \{ u \mid (u, \lmd) \in \mc{K} \}
\ee
is the set of all real critical points.
To construct tight relaxations for solving \reff{pop:c(x)>=0},
we need the following assumption for Lagrange multipliers.

\begin{ass} \label{ass:lmd:p}
For each $i \in \mc{E} \cup \mc{I}$,
there exists a polynomial $p_i \in \re[x]$ such that for all
$(x,\lmd) \in \mc{K}$  it holds that
\[
\lmd_i \, =  \, p_i(x).
\]
\end{ass}

Assumption~\ref{ass:lmd:p} is generically satisfied,
as shown in Proposition~\ref{pro:c-ns-gen}.
For the following special cases, we can get polynomials
$p_i$ explicitly.
\bit

\item (Simplex) For the simplex
$\{e^Tx-1 =0, \, x_1 \geq 0, \ldots, x_n \geq 0\}$,
it corresponds to that $\mc{E} = \{0\}$, $\mc{I} = [n]$,
$
c_0(x)=e^Tx-1, \, c_j(x)=x_j  \, (j\in [n]).
$
The Lagrange multipliers can be expressed as
\be
\lmd_0 = x^T \nabla f(x), \quad \lmd_j = f_{x_j}-x^T\nabla f(x) \quad (j \in [n]).
\ee

\item (Hypercube) For the hypercube $[-1,1]^n$,
it corresponds to that $\mc{E} = \emptyset$, $\mc{I} = [n]$ and
each $c_j(x)= 1-x_j^2$. We can show that
\be
\lmd_j = - \half x_j f_{x_j} \quad (j\in [n]).
\ee

\item (Ball or sphere)
The constraint is $1-x^Tx=0$ or $1-x^Tx\geq 0$.
It corresponds to that $\mc{E} \cup \mc{I} = \{1\}$ and
$c_1 = 1-x^Tx$. We have
\be
\lmd_1 = -\half x^T \nabla f(x).
\ee

%
%\item (Polyhedron)
%A general polyhedron is the set
%\[
%\{ x\in \re^n:\, a_j^Tx - b_j \geq 0 \, ( 1\leq j \leq m) \}
%\]
%for some vectors $a_1, \ldots, a_m$ and scalars $b_1, \ldots, b_m$.
%Let $A = \bbm a_1 & \cdots & a_m \ebm^T$.
%If $\rank \, A = m$, then we can determine that
%\be
%\lmd = (AA^T)^{-1}A \nabla f(x).
%\ee
%For general polyhedra, we can always express $\lmd$
%as polynomial functions in $x$.
%This is shown in \S\ref{sc:PH}.
%

\item (Triangular constraints)
Suppose $\mc{E} \cup \mc{I} =\{1, \ldots, m\} $ and each
\[
c_i(x) \, =  \tau_i x_i + q_i(x_{i+1}, \ldots, x_n)
\]
for some polynomials $q_i \in \re[x_{i+1}, \ldots, x_n]$
and scalars $\tau_i \ne 0$.
The matrix $T(x)$, consisting of the first $m$ rows of
$
[ \nabla c_1(x), \ldots, \nabla c_m(x) ],
$
is an invertible lower triangular matrix with constant diagonal entries.
Then,
\[
\lmd = T(x)^{-1} \cdot
\bbm f_{x_1} & \cdots & f_{x_m} \ebm^T.
\]
Note that the inverse $T(x)^{-1}$ is a matrix polynomial.

\eit
\noindent
For more general constraints, we can also express $\lmd$
as a polynomial function in $x$ on the set $\mc{K}_c$.
This will be discussed in \S\ref{sc:PH} and \S\ref{sc:gencon}.

%
%
%Let $h,g$ be the tuples of polynomials as follows:
%\be  \label{dfhg:gpop}
%\left\{\baray{rl}
%h = & \Big(  c_i(x) \, ( i \in \mc{E} ) , \,
%p_j(x) c_j(x) \, ( j \in \mc{I} ) , \,  \\
%& \qquad p_0(x) \nabla f(x) - \sum \limits_{ i \in \mc{E} \cup \mc{I} }
%p_i(x) \nabla c_i(x)  \Big), \\
%g = & \Big(  c_j(x) ( j \in \mc{I} ) ), \,  p_j(x) ( j \in \mc{I} )    \Big).
%\earay \right.
%\ee
%

For the polynomials $p_i$ as in Assumption~\ref{ass:lmd:p}, denote
\be \label{df:phi:psi}
\phi \, :=  \Big( \nabla f - \sum \limits_{ i \in \mc{E} \cup \mc{I} }
p_i \nabla c_i, \,\, \big( p_j c_j \big)_{ j \in \mc{I} } \Big), \quad
\psi \, :=  \big(  p_j \big)_{ j \in \mc{I} }.
\ee
When the minimum value $f_{\min}$ of \reff{pop:c(x)>=0}
is achieved at a critical point,
\reff{pop:c(x)>=0} is equivalent to the problem
\be \label{g:kkt:opt}
\left\{\baray{rl}
f_c :=  \min & f(x)  \\
 s.t. &  c_{eq}(x) = 0, \, c_{in}(x) \geq 0, \\
      &  \phi(x) = 0, \, \psi(x) \geq 0.
\earay \right.
\ee
We apply Lasserre relaxations to solve it.
For an integer $k>0$ (called the {\it relaxation order}),
the $k$th order Lasserre's relaxation for \reff{g:kkt:opt} is
\be \label{momrlx:k}
\left\{\baray{rl}
f_k^\prm  := \min & \langle f, y \rangle \\
 s.t. & \langle 1, y \rangle = 1,  M_k(y) \succeq 0 \\
      &  L_{c_{eq}}^{(k)}(y) = 0,   \,  L_{c_{in}}^{(k)}(y) \succeq 0, \\
      &  L_{\phi}^{(k)}(y) = 0,   \,  L_{\psi}^{(k)}(y) \succeq 0, \,
        y \in \re^{ \N_{2k}^n }.
\earay \right.
\ee
Since $x^0 = 1$ (the constant one polynomial),
the condition $\langle 1, y \rangle = 1$ means that $(y)_0=1$.
The dual optimization problem of \reff{momrlx:k} is
\be \label{rlxsos:k}
\left\{\baray{rl}
f_k := \, \max & \gamma  \\
 s.t. &  f-\gamma \in \mbox{IQ}(c_{eq},c_{in})_{2k} +
             \mbox{IQ}(\phi,\psi)_{2k}.
\earay \right.
\ee
We refer to \S\ref{sc:pre} for the notation
used in \reff{momrlx:k}-\reff{rlxsos:k}.
They are equivalent to semidefinite programs (SDPs),
so they can be solved by SDP solvers (e.g., {\tt SeDuMi} \cite{Sturm}).
For $k=1,2,\cdots$, we get a hierarchy of Lasserre relaxations.
In \reff{momrlx:k}-\reff{rlxsos:k}, if we remove the usage of $\phi$ and $\psi$,
they are reduced to standard Lasserre relaxations in \cite{Las01}.
So, \reff{momrlx:k}-\reff{rlxsos:k} are stronger relaxations.

By the construction of $\phi$ as in \reff{df:phi:psi},
Assumption~\ref{ass:lmd:p} implies that
\[
\mc{K}_c = \{ u \in \re^n : \, c_{eq}(u)=0, \, \phi(u)=0 \}.
\]
By Lemma~3.3 of \cite{DNP},
$f$ achieves only finitely many values on $\mc{K}_c$, say,
\be \label{v1<vN}
v_1 < \cdots < v_N.
\ee
A point $u \in \mc{K}_c$ might not be feasible for \reff{g:kkt:opt},
i.e., it is possible that $c_{in}(u) \not\geq 0$ or $\psi(u) \not\geq 0$.
In applications, we are often interested
in the optimal value $f_c$ of \reff{g:kkt:opt}.
When \reff{g:kkt:opt} is infeasible, by convention, we set
\[
f_c \, = \, +\infty.
\]

When the optimal value $f_{\min}$ of \reff{pop:c(x)>=0}
is achieved at a critical point,
$f_c = f_{\min}$. This is the case if the feasible set is compact,
or if $f$ is coercive (i.e., for each $\ell$,
the sublevel set $\{f(x) \leq \ell \}$ is compact),
and the constraint qualification condition holds.
As in \cite{Las01}, one can show that
\be \label{fk<=fc}
f_k \leq f_k^\prm \leq f_c
\ee
for all $k$. Moreover, $\{ f_k \}$ and $\{ f_k^\prm \}$
are both monotonically increasing.
If for some order $k$ it occurs that
\[ f_k = f_k^\prm = f_c,\]
then the $k$th order Lasserre's relaxation
is said to be {\it tight} (or {\it exact}).

\subsection{Tightness of the relaxations}

Let $c_{in},\psi, \mc{K}_c,f_c$ be as above.
We refer to \S\ref{sc:pre} for the notation $\mbox{Qmod}(c_{in},\psi)$.
We begin with a general assumption.

\begin{ass} \label{ass:sig}
There exists $\rho \in \mbox{Qmod}(c_{in},\psi)$
such that if $u\in \mc{K}_c$ and $f(u) < f_c$, then $\rho(u) < 0$.
\end{ass}

In Assumption~\ref{ass:sig}, the hypersurface $\rho(x) = 0$
separates feasible and infeasible critical points.
Clearly, if $u\in \mc{K}_c$ is a feasible point for \reff{g:kkt:opt},
then $c_{in}(u) \geq 0$ and $\psi(u) \geq 0$,
and hence $\rho(u) \geq 0$.
Assumption~\ref{ass:sig} generally holds.
For instance, it is satisfied for the following general cases.

\bit

\item [a)] When there are no inequality constraints,
$c_{in}$ and $\psi$ are empty tuples. Then,
$\mbox{Qmod}(c_{in},\psi) = \Sig[x]$ and
Assumption~\ref{ass:sig} is satisfied for $\rho =0$.

\item [b)] Suppose the set $\mc{K}_c$ is finite, say,
$\mc{K}_c = \{ u_1, \ldots, u_D \}$, and
\[
f(u_1), \ldots, f(u_{t-1}) < f_c \leq
f(u_{t}), \ldots, f(u_D).
\]
Let $\ell_1,\ldots, \ell_D$ be real interpolating polynomials
such that $\ell_i(u_j) = 1$ for $i=j$
and $\ell_i(u_j) = 0$ for $ i \ne j$.
For each $i = 1, \ldots, t$, there must exist $j_i \in \mc{I}$
such that $c_{j_i}(u_i) < 0$.
Then, the polynomial
\be \label{cho:rho:1}
\rho := \sum_{i < t} \frac{-1}{c_{j_i}(u_i)} c_{j_i}(x) \ell_i(x)^2 +
 \sum_{i \geq t}  \ell_i(x)^2
\ee
satisfies Assumption~\ref{ass:sig}.

\item [c)] For each $x$ with $f(x)=v_i<f_c$, at least one of the constraints
$c_j(x)\geq 0, p_j(x) \geq 0 (j \in \mc{I})$ is violated.
Suppose for each critical value $v_i < f_c$,
there exists $g_i \in  \{ c_j, p_j \}_{j \in \mc{I} }$ such that
\[
 g_i < 0 \quad \mbox{ on }  \quad \mc{K}_c \cap \{ f(x) = v_i \}.
\]
Let $\varphi_1, \ldots, \varphi_N$
be real univariate polynomials such that
$\varphi_i( v_j) = 0$ for $i \ne j$ and $\varphi_i( v_j) = 1$ for $i = j$.
Suppose $v_t = f_c$. Then, the polynomial
\be  \label{cho:rho:2}
\rho := \sum_{ i < t } g_i(x) \big( \varphi_i(f(x)) \big)^2 +
\sum_{ i \geq t }  \big( \varphi_i(f(x)) \big)^2
\ee
satisfies Assumption~\ref{ass:sig}.

\eit

We refer to \S\ref{sc:pre}
for the archimedean condition
and the notation $\mbox{IQ}(h,g)$ as in \reff{df:IQ(h,g)}.
The following is about the convergence of relaxations
\reff{momrlx:k}-\reff{rlxsos:k}.

\begin{theorem}  \label{thm:pf:tight}
Suppose $\mc{K}_c \ne \emptyset$ and Assumption~\ref{ass:lmd:p} holds.
If
\bit
\item [i)] $\mbox{IQ}(c_{eq},c_{in}) + \mbox{IQ}(\phi, \psi)$
is archimedean, {\bf or}

\item [ii)] $\mbox{IQ}(c_{eq},c_{in}) $ is archimedean, {\bf or}

\item [iii)] Assumption~\ref{ass:sig} holds,

\eit
then $f_k  = f_k^\prm  = f_c$ for all $k$ sufficiently large.
Therefore, if the minimum value $f_{\min}$ of \reff{pop:c(x)>=0}
is achieved at a critical point,
then $f_k  = f_k^\prm  = f_{\min}$ for all $k$ big enough
if one of the conditions i)-iii) is satisfied.
\end{theorem}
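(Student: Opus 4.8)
The plan is to reduce everything to a single finiteness statement about sums of squares on the critical variety, and then handle each of the three alternative hypotheses by producing an archimedean-type certificate. Set $V := \mc{V}_\re(c_{eq},\phi)$, which by Assumption~\ref{ass:lmd:p} and the construction of $\phi$ equals $\mc{K}_c$, and recall from Lemma~3.3 of \cite{DNP} that $f$ takes only finitely many values $v_1 < \cdots < v_N$ on $\mc{K}_c$. Since the real radical ideal $I(\mc{K}_c)$ contains a polynomial vanishing exactly on $\mc{K}_c$, and $f_c$ equals the smallest $v_i$ attained at a point that is also feasible for the inequalities $c_{in}\ge 0,\ \psi\ge 0$, the key algebraic fact to establish is: for every $\varepsilon>0$ there is a relaxation order $k$ with
\[
f - (f_c-\varepsilon) \ \in\ \mbox{IQ}(c_{eq},c_{in})_{2k} + \mbox{IQ}(\phi,\psi)_{2k}.
\]
This yields $f_k \ge f_c - \varepsilon$ for large $k$; combined with the trivial bound \reff{fk<=fc} and monotonicity of $\{f_k\},\{f_k'\}$, it forces $f_k = f_k' = f_c$ exactly once $k$ is large enough (the ``exactness, not just asymptotics'' step uses that $f$ attains finitely many values on $\mc{K}_c$, so the $\varepsilon$ can be taken to be $0$ after finitely many orders — this is the standard trick from \cite{Nie-opcd,Nie-jac}).

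For case~(i), $\mbox{IQ}(c_{eq},c_{in}) + \mbox{IQ}(\phi,\psi)$ is archimedean, so I would first show $f - f_c \ge 0$ on the feasible set of \reff{g:kkt:opt} (immediate, since that set contains the minimizer-critical point and $f_c$ is its value), then argue that on the \emph{real variety} cut out by $c_{eq}=0,\phi=0$ the function $f - f_c$ is nonnegative at every feasible point and the obstruction to membership is only the finitely many critical points where $f < f_c$ but some inequality is violated; those are excluded because at such a point $\psi$ or $c_{in}$ is negative. Formally I would invoke the representation theorem for nonnegative polynomials on archimedean quadratic modules \emph{with the help of the optimality-condition refinement} noted right after Theorem~\ref{thm:PutThm} (the $f$ nonnegative, not strictly positive, case from \cite{Nie-opcd}) — or, more robustly, perturb to $f - f_c + \delta$, which is strictly positive on the compact feasible set, apply Putinar (Theorem~\ref{thm:PutThm}), and pass to the limit using the finite-value property to clear the $\delta$.

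Case~(ii) reduces to case~(i): if $\mbox{IQ}(c_{eq},c_{in})$ is archimedean then a fortiori the larger set $\mbox{IQ}(c_{eq},c_{in}) + \mbox{IQ}(\phi,\psi)$ is, so nothing new is needed. Case~(iii) is where the real work lies, because the feasible set may be noncompact. Here I would use the separating element $\rho \in \mbox{Qmod}(c_{in},\psi)$ from Assumption~\ref{ass:sig}: on $\mc{K}_c$, $\rho \ge 0$ exactly on the points with $f \ge f_c$ and $\rho < 0$ on the points with $f < f_c$. The idea is to work on the variety $\mc{K}_c$ (where, crucially, $f$ has \emph{finite image}, so $\re[x]/I(\mc{K}_c)$-arguments make the relevant quotient behave like a finite-dimensional / zero-dimensional-in-the-values situation), build a one-dimensional interpolation in the value $f(x)$ — exactly as in the explicit formulas \reff{cho:rho:1}--\reff{cho:rho:2} — to manufacture, for each target $\varepsilon$, an identity expressing $f - (f_c - \varepsilon)$ as an element of $\mbox{Ideal}(c_{eq},\phi) + \mbox{Qmod}(c_{in},\psi,\rho)$, and then absorb $\rho$ back into $\mbox{Qmod}(c_{in},\psi)$ since $\rho$ already lies there. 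Concretely: let $\varphi_i$ be univariate interpolators at the values $v_i$, write $f - (f_c-\varepsilon) = \sum_i (v_i - f_c + \varepsilon)\varphi_i(f)^2 \cdot(\text{mod } I(\mc{K}_c))$ up to an SOS correction, split the sum at $i = t$ where $v_t = f_c$, and on the terms with $v_i < f_c$ use $\rho$ (a negative multiplier is available because $v_i - f_c + \varepsilon < 0$ for small $\varepsilon$) to convert the bad-sign coefficient into something in the quadratic module. The main obstacle is precisely this step — ensuring the interpolation identity holds \emph{modulo the truncated ideal $\mbox{Ideal}(c_{eq},\phi)_{2k}$ at a controlled order $k$}, not merely modulo the full ideal, and checking that the degrees of all the SOS and multiplier pieces stay bounded so that a single finite $k$ works; this is a degree-bookkeeping argument in the spirit of \cite{Nie-jac,DNP} that I would carry out using that $\re[x]/I(\mc{K}_c)$ is a finite module over $\re[f]$ and hence the needed membership stabilizes.
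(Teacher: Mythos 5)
You correctly identify the crux: one must produce, for a \emph{single fixed} truncation order $k^*$, certificates for $f-(f_c-\eps)$ whose degrees are bounded uniformly in $\eps$, so that $f_{k^*}\geq f_c-\eps$ for every $\eps>0$ forces $f_{k^*}=f_c$. Your reduction of (ii) to (i) is exactly the paper's. But your treatment of case (i) has a genuine gap. Perturbing to $f-f_c+\delta$, applying Putinar's theorem, and ``passing to the limit using the finite-value property to clear the $\delta$'' does not work: Putinar's theorem carries no degree bound, and the order of the certificate for $f-f_c+\delta$ typically blows up as $\delta\to 0$. The finite-value property of $f$ on $\mc{K}_c$ does not by itself repair this; it is an input to a completely different construction. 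What the paper actually does in Case I is take the primary decomposition of the critical ideal $I=\mbox{Ideal}(c_{eq},\phi)$ into components $I_{t-1},I_t,\ldots,I_N$ sorted by the value of $f$, apply Hilbert's Strong Nullstellensatz on each component to get $f^{m_t}\in I_t$ (resp.\ $(f/v_i-1)^{m_i}\in I_i$), build explicit truncated square-root polynomials $s_t(\eps)$ whose degrees are independent of $\eps$, handle the ``bad'' component $I_{t-1}$ by the Positivstellensatz plus Putinar (applied once, not along a $\delta$-limit), and glue via a polynomial partition of unity $a_{t-1}^2+\cdots+a_N^2\equiv 1 \bmod I$. That explicit construction is where the uniform degree bound comes from, and none of it appears in your sketch. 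Your appeal to the nonnegativity result of \cite{Nie-opcd} is also off target: it presupposes the optimality conditions, which this theorem deliberately does not assume.

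Your case (iii) is closer in spirit to the paper: the univariate interpolation $\varphi_i(f)$ at the values $v_i$ and the use of $\rho$ to absorb the components with $v_i<f_c$ are the right ingredients. However, you are missing the concrete mechanism that closes the degree-bookkeeping issue you flag as the ``main obstacle.'' The paper sets $\hat f=f-f_c-s$ with $s=\sum_{i\ge t}(v_i-f_c)\varphi_i(f)^2$, observes $\hat f\equiv 0$ on $\{c_{eq}=0,\ \phi=0,\ \rho\ge 0\}$ (a set with a \emph{single} inequality), applies the Positivstellensatz to get $\hat f^{2\ell}+q\in\mbox{Ideal}(c_{eq},\phi)$ with $q\in\mbox{Qmod}(\rho)\subseteq\mbox{Qmod}(c_{in},\psi)$, and then invokes Lemma~2.1 of \cite{Nie-rVar} to decompose $\hat f+\eps=\phi_\eps+\theta_\eps$ with degrees controlled uniformly in $\eps$. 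Your proposed substitute --- that $\re[x]/I(\mc{K}_c)$ is a finite module over $\re[f]$ --- is false in general: $\mc{K}_c$ may be positive-dimensional (several of the paper's examples have infinitely many minimizers), in which case the quotient is not finite over $\re[f]$; only the weaker fact that $\prod_i(f-v_i)$ vanishes on $\mc{K}_c$ is available, and that alone does not deliver the stabilization you need. Without the single-inequality Positivstellensatz step and the uniform $\eps$-decomposition lemma, case (iii) remains incomplete.
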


\noindent
{\it Remark:} In Theorem~\ref{thm:pf:tight},
the conclusion holds if anyone of conditions i)-iii) is satisfied.
The condition ii) is only about constraining polynomials of \reff{pop:c(x)>=0}.
It can be checked without $\phi,\psi$.
Clearly, the condition ii) implies the condition i).

The proof for Theorem~\ref{thm:pf:tight} is given in the following.
The main idea is to consider the set of critical points.
It can be expressed as a union of subvarieties.
The objective $f$ is a constant in each one of them.
We can get an SOS type representation for $f$
on each subvariety, and then construct a single one for
$f$ over the entire set of critical points.

\begin{proof}[Proof of Theorem~\ref{thm:pf:tight}]
Clearly, every point in the complex variety
\[
\mc{K}_1  :=  \{ x \in \cpx^n \mid  c_{eq}(x) = 0, \phi(x) = 0  \}
\]
is a critical point. By Lemma~3.3 of \cite{DNP},
the objective $f$ achieves finitely many real values on
$\mc{K}_c = \mc{K}_1 \cap \re^n$,
say, they are $v_1 < \cdots < v_N$.
Up to the shifting of a constant in $f$, we can further assume that
$
f_c = 0.
$
Clearly, $f_c$ equals one of $v_1, \ldots, v_N$, say
$
v_t = f_c = 0.
$

\bigskip
\noindent
{\bf Case I:}\, Assume 
$\mbox{IQ}(c_{eq},c_{in}) + \mbox{IQ}(\phi, \psi)$
is archimedean. Let
\[ I \, := \, \mbox{Ideal}(c_{eq}, \phi), \]
the critical ideal. Note that $\mc{K}_1 = \mc{V}_{\cpx}(I)$.
The variety $\mc{V}_{\cpx}(I)$ is a union of irreducible subvarieties,
say, $V_1, \ldots, V_\ell$.
If $V_i \cap \re^n \ne \emptyset$, then $f$ is a real constant on $V_i$,
which equals one of $v_1, \ldots, v_N$.
This can be implied by Lemma~3.3 of \cite{DNP} and Lemma~3.2 of \cite{Nie-jac}.
Denote the subvarieties of $\mc{V}_{\cpx}(I)$:
\[
T_i  := \mc{K}_1 \cap \{ f(x) = v_i \} \quad (i=t, \ldots,N).
\]
Let $T_{t-1}$ be the union of irreducible subvarieties
$V_i$, such that either $V_i \cap \re^n = \emptyset$
or $f \equiv v_j$ on $V_i$ with $v_j < v_t = f_c$.
%
%Since $c_{eq}, \phi$ have real coefficients,
%the values $w_1,\ldots, w_M$ must appear in conjugate pairs,
%so the product
%\[
%\eta \, := \, (f-v_1)\cdots (f-v_{t-1}) \cdot (f-w_1) \cdots (f - w_M)
%\]
%is a polynomial in $\re[x]$.
%
Then, it holds that
\[
\mc{V}_{\cpx}(I) = T_{t-1} \cup T_t \cup \cdots \cup T_N.
\]
By the primary decomposition of $I$ \cite{GPsig,Stu02}, there exist ideals
$I_{t-1}, I_t, \ldots, I_N \subseteq \re[x]$ such that
\[
I = I_{t-1} \,\cap \, I_t \,\cap \,\cdots \,\cap \, I_N
\]
and $T_i = \mc{V}_{\cpx}(I_i)$ for all $i = t-1, t, \ldots, N$.
Denote the semialgebraic set
\be \label{set:S}
S := \{ x \in \re^n \mid \, c_{in}(x) \geq 0, \, \psi(x) \geq 0 \}.
\ee

\smallskip
For $i=t-1$, we have $\mc{V}_{\re}(I_{t-1}) \cap S = \emptyset$,
because $v_1, \ldots, v_{t-1} < f_c$.
%By the Positivstellensatz \cite[Corollary~4.1.8]{BCR},
By the Positivstellensatz \cite[Corollary~4.4.3]{BCR},
there exists $p_0 \in \mbox{Preord}(c_{in},\psi)$\footnote
{It is the preordering of the polynomial tuple
$(c_{in},\psi)$; see \S\ref{ssc:prod}.}
satisfying
$2 + p_0 \in I_{t-1}$.
Note that $1+p_0 >0$ on $\mc{V}_{\re}(I_{t-1}) \cap S$.
The set $I_{t-1} + \mbox{Qmod}(c_{in}, \psi)$ is archimedean,
because $I \subseteq I_{t-1}$ and
\[
\mbox{IQ}(c_{eq},c_{in}) + \mbox{IQ}(\phi,\psi)
\, \subseteq \,
I_{t-1} + \mbox{Qmod}(c_{in}, \psi).
\]
By Theorem~\ref{thm:PutThm}, we have
\[
p_1 := 1+ p_0 \in I_{t-1} + \mbox{Qmod}(c_{in}, \psi).
\]
Then, $1+p_1   \in I_{t-1}$.
There exists $p_2 \in \mbox{Qmod}(c_{in}, \psi)$ such that
\[
-1 \equiv p_1 \equiv  p_2  \, \mod \, I_{t-1}.
\]
Since $f = (f/4+1)^2 -1 \cdot (f/4-1)^2$, we have
\begin{align*}
f & \equiv  \sig_{t-1} :=  \left\{
(f/4+1)^2 + p_2 (f/4-1)^2  \right\}
\quad \mod \quad I_{t-1}.
\end{align*}
So, when $k$ is big enough, we have
$\sig_{t-1} \in \mbox{Qmod}(c_{in}, \psi)_{2k}$.

\smallskip
For $i=t$, $v_t=0$ and $f(x)$ vanishes on $\mc{V}_{\cpx}(I_t)$.
By Hilbert's Strong Nullstellensatz \cite{CLO97},
there exists an integer $m_t > 0$
such that $f^{m_t} \in I_t$.
%Hence,
%\[
%s_t(\eps) := \sqrt{\eps} \left(1 + \eps^{-1} f \right)^{1/2}
% \equiv \sqrt{\eps}  \sum_{j=0}^{m_t-1} \binom{1/2}{j} \eps^{-j} f^j
% \,\,\,\, \mod\,\,\,\, I_t\, .
%\]
Define the polynomial
\[
s_t(\eps) :=  \sqrt{\eps}  {\sum}_{j=0}^{m_t-1}
\binom{1/2}{j} \eps^{-j} f^j.
\]
Then, we have that
\[
D_1 := \eps (1 + \eps^{-1} f )  - \big( s_t(\eps) \big)^2
 \equiv  0 \,\,\,\, \mod\,\,\,\, I_t\, .
\]
This is because in the subtraction of $D_1$,
after expanding $\big( s_t(\eps) \big)^2$,
all the terms $f^j$ with $j < m_t$ are cancelled
and $f^j \in I_t$ for $j \geq m_t$.
So, $D_1 \in I_t$. Let $\sig_t(\eps) := s_t(\eps)^2$, then
$f + \eps - \sig_t(\eps) = D_1$ and
\be \label{f+eps=bj(eps)f}
f + \eps - \sig_t(\eps)  \, = \,
{\sum}_{j=0}^{m_t-2}  b_j(\eps) f^{m_t+j}
\ee
for some real scalars $b_j(\eps)$, depending on $\eps$.

\smallskip
For each $i = t+1, \ldots, N$, $v_i>0$ and $f(x)/v_i - 1$
vanishes on $\mc{V}_{\cpx}(I_i)$.
By Hilbert's Strong Nullstellensatz \cite{CLO97},
there exists $0< m_i \in \N$ such that
$(f/v_i - 1)^{m_i} \in I_i.$
%So,
%\[
%\baray{rcl}
%s_i &:=& \sqrt{v_i} \Big(1 \, + \, \big( f/v_i - 1 \big)\Big)^{1/2}  \\
% &\equiv &\sqrt{v_i}  \sum_{j=0}^{m_i-1} \binom{1/2}{j} (f/v_i - 1)^j\,\,\,\,
%\mod\,\,\,\, I_i\, .
%\earay
%\]
Let
\[
s_i   := \sqrt{v_i}
{\sum}_{j=0}^{m_i-1} \binom{1/2}{j} (f/v_i - 1)^j.
\]
Like for the case $i=t$, we can similarly show that
$f - s_i^2 \, \in \, I_i$.
Let $\sig_i =  s_i^2$, then
$f - \sig_i \, \in \, I_i$.

\smallskip
Note that $\mc{V}_{\cpx}(I_i) \cap \mc{V}_{\cpx}(I_j) = \emptyset$
for all $i \ne j$. By Lemma~3.3 of \cite{Nie-jac}, there exist polynomials
$a_{t-1}, \ldots, a_N \in \re[x]$ such that
\[
a_{t-1}^2+\cdots+a_N^2-1 \, \in \, I, \quad a_i \in
\bigcap_{ i \ne j \in \{ t-1, \ldots,N \} } I_j.
\]
For $\eps >0$, denote the polynomial
\[
\sig_\eps \, := \, \sig_t(\eps) a_t^2 + \sum_{ t \ne j \in \{ t-1, \ldots,N \} }
(\sig_j+\eps) a_j^2,
\]
then
\[
\baray{rcl}
 f + \eps  - \sig_\eps
 & = &   (f+\eps)(1-a_{t-1}^2-\cdots-a_N^2) + \\
 &  & \sum_{ t \ne i \in \{ t-1, \ldots,N \} }
         (f-\sig_i) a_i^2 + (f+\eps-\sig_t(\eps) ) a_t^2.
\earay
\]
For each $i \ne t$, $f-\sig_i \in I_i$, so
\[
(f-\sig_i) a_i^2 \, \in \, \bigcap_{j=t-1}^N I_j = I.
\]
Hence, there exists $k_1 >0$ such that
\[
(f-\sig_i) a_i^2 \, \in \, I_{2k_1} \, \,
(t \ne i \in \{t-1, \ldots, N \}).
\]
Since $f+\eps-\sig_t(\eps) \in I_t$, we also have
\[
(f+\eps-\sig_t(\eps) ) a_t^2 \, \in \,  \bigcap_{j=t-1}^N I_j = I.
\]
Moreover, by the equation~\reff{f+eps=bj(eps)f},
\[
(f+\eps-\sig_t(\eps) ) a_t^2  =  \sum_{j=0}^{m_t-2} b_j(\eps) f^{m_t+j} a_t^2.
\]
Each $f^{m_t+j} a_t^2 \in I$, since $f^{m_t+j} \in I_t$.
So, there exists $k_2 >0$ such that for all $\eps >0$
\[
(f+\eps-\sig_t(\eps) ) a_t^2 \, \in \, I_{2k_2}.
\]
Since $1-a_{t-1}^2-\cdots-a_N^2 \in I$,
there also exists $k_3 >0$ such that for all $\eps >0$
\[
(f+\eps)(1-a_{t-1}^2-\cdots-a_N^2) \, \in \, I_{2k_3}.
\]
Hence, if $k^* \geq \max\{k_1, k_2, k_3\}$, then we have
\[
f(x) + \eps  - \sig_\eps \, \in \, I_{2k^*}
\]
for all $\eps >0$. By the construction,
the degrees of all $\sig_i$ and $a_i$ are independent of $\eps$.
So, $\sig_\eps  \, \in \, \mbox{Qmod}(c_{in}, \psi)_{2k^*}$
for all $\eps >0$ if $k^*$ is big enough. Note that
\[
I_{2k^*} + \mbox{Qmod}(c_{in}, \psi)_{2k^*} =
\mbox{IQ}(c_{eq}, c_{in})_{2k^*} +
\mbox{IQ}(\phi, \psi)_{2k^*}.
\]
This implies that
$
f_{k^*} \geq  f_c -\eps
$
for all $\eps >0.$ On the other hand, we always have
$f_{k^*} \leq f_c$. So, $f_{k^*} = f_c$. Moreover,
since $\{ f_k \}$ is monotonically increasing,
we must have $f_{k} = f_c$ for all $k\geq k^*$.

\bigskip
\noindent
{\bf Case II:}\, Assume
$\mbox{IQ}(c_{eq},c_{in})$ is archimedean. Because
\[
\mbox{IQ}(c_{eq},c_{in}) \subseteq \mbox{IQ}(c_{eq},c_{in})
+ \mbox{IQ}(\phi,\psi),
\]
the set $\mbox{IQ}(c_{eq},c_{in})+ \mbox{IQ}(\phi,\psi)$
is also archimedean. Therefore, the conclusion is also true
by applying the result for {\bf Case I}.

\bigskip
\noindent
{\bf Case III:}\, Suppose the Assumption~\ref{ass:sig} holds.
Let $\varphi_1, \ldots, \varphi_N$
be real univariate polynomials such that
$\varphi_i( v_j) = 0$ for $i \ne j$ and $\varphi_i( v_j) = 1$ for $i = j$.
Let
\[
s := s_t + \cdots + s_N \quad \mbox{where each} \quad
s_i :=  ( v_i -  f_c) \big( \varphi_i (  f )  \big)^2.
\]
Then, $s \in \Sig[x]_{2k_4}$ for some integer $k_4 > 0$. Let
\[
\hat{f}:=f - f_c - s.
\]
We show that there exist an integer $\ell > 0$
and $q \in \mbox{Qmod}(c_{in}, \psi)$ such that
\[
\hat{f}^{2\ell} + q  \in \mbox{Ideal}(c_{eq},\phi).
\]
This is because, by Assumption~\ref{ass:sig},
$\hat{f}(x) \equiv 0$ on the set
\[
\mc{K}_2 \, := \, \{ x \in \re^n: \,
c_{eq}(x) = 0, \, \phi(x) = 0, \, \rho(x) \geq 0 \}.
\]
It has only a single inequality.
By the Positivstellensatz~\cite[Corollary~4.4.3]{BCR},
there exist $0< \ell \in \N$ and
$q = b_0 + \rho b_1 $ ($b_0, b_1 \in \Sig[x]$) such that
$
\hat{f}^{2\ell} + q  \in \mbox{Ideal}(c_{eq},\phi).
$
By Assumption~\ref{ass:sig}, $\rho \in \mbox{Qmod}(c_{in}, \psi)$,
so we have $q \in \mbox{Qmod}(c_{in}, \psi)$.

%%%%%%%%%%%%%%%%%%%%%%%%%%%%%%%%%%%%%%%%%%%%%%%%%%%%
\iffalse

\bit

\item [ii)] Suppose $\mbox{Qmod}(c_{in},\psi)$ is archimedean.
Note that $\hat{f} \equiv 0$ on the set
\[
\mc{K}_3 \, := \, \{ x \in \re^n: \,
c_{eq}(x) = 0, \, \phi(x) = 0, \, c_{in}(x) \geq 0, \, \psi(x) \geq 0 \}.
\]
By the Positivstellensatz, there exist an integer $\ell >0$ and
$q = a_0 + \rho a_1 $ ($a_0, a_1 \in \Sig[x]$) such that
$
\hat{f}^{2\ell} + q  \in \mbox{Ideal}(c_{eq},\phi).
$
By Assumption~\ref{ass:sig}, $\rho \in \mbox{Qmod}(c_{in}, \psi)$,
so $q \in \mbox{Qmod}(c_{in}, \psi)$.

\eit

\fi
%%%%%%%%%%%%%%%%%%%%%%%%%%%%%%%%

For all $\eps >0$ and $\tau >0$, we have
$
\hat{f} + \eps = \phi_\eps + \theta_\eps
$
where
\[
\phi_\eps = -\tau \eps^{1-2\ell}
\big(\hat{f}^{2\ell} + q \big),
\]
\[
\theta_\eps = \eps \Big(1 + \hat{f}/\eps +
\tau ( \hat{f}/\eps)^{2\ell} \Big)
+ \tau \eps^{1-2\ell} q.
\]
By Lemma~2.1 of \cite{Nie-rVar}, when $\tau \geq \frac{1}{2\ell}$,
there exists $k_5$ such that, for all $\eps >0$,
\[
\phi_\eps \in \mbox{Ideal}(c_{eq},\phi)_{2k_5}, \quad
\theta_\eps \in \mbox{Qmod}(c_{in},\psi)_{2k_5}.
\]
Hence, we can get
\[
f - (f_c -\eps) = \phi_\eps + \sig_\eps,
\]
where $\sig_\eps = \theta_\eps + s \in \mbox{Qmod}(c_{in},\psi)_{2k_5}$
for all $\eps >0$. Note that
\[
\mbox{IQ}(c_{eq},c_{in})_{2k_5} + \mbox{IQ}(\phi,\psi)_{2k_5} =
\mbox{Ideal}(c_{eq},\phi)_{2k_5} + \mbox{Qmod}(c_{in},\psi)_{2k_5}.
\]
For all $\eps>0$,
$\gamma = f_c-\eps$ is feasible in \reff{rlxsos:k} for the order $k_5$,
so $f_{k_5} \geq f_c$.
Because of \reff{fk<=fc} and the monotonicity of $\{ f_k \}$,
we have $f_{k} = f_{k}^\prm = f_c$ for all $k \geq k_5$.
\end{proof}

\subsection{Detecting tightness and extracting minimizers}

The optimal value of \reff{g:kkt:opt} is $f_c$,
and the optimal value of \reff{pop:c(x)>=0} is $f_{\min}$.
If $f_{\min}$ is achievable at a critical point, then $f_c  = f_{\min}$.
In Theorem~\ref{thm:pf:tight}, we have shown that
$f_k = f_c$ for all $k$ big enough,
where $f_k$ is the optimal value of \reff{rlxsos:k}.
The value $f_c$ or $f_{\min}$ is often not known.
How do we detect the tightness
$
f_k =  f_c
$
in computation? The flat extension or
flat truncation condition~\cite{CuFi05,HenLas05,Nie-ft}
can be used for checking tightness.
Suppose $y^*$ is a minimizer of \reff{momrlx:k}
for the order $k$. Let
\be \label{deg:d}
 d \, := \,
\lceil  \deg(c_{eq}, c_{in}, \phi, \psi)/2 \rceil.
\ee
If there exists an integer $t \in [d, k]$ such that
\be \label{ft:Mty}
\rank \, M_t (y^*) \,= \, \rank \, M_{t-d}(y^*)
\ee
then $f_k  = f_{c}$ and we can get $r := \rank \, M_t (y^*)$
minimizers for \reff{g:kkt:opt}~\cite{CuFi05,HenLas05,Nie-ft}.
The method in \cite{HenLas05}
can be used to extract minimizers.
It was implemented in the software
{\tt GloptiPoly~3} \cite{GloPol3}.
Generally, \reff{ft:Mty}
can serve as a sufficient and necessary condition
for detecting tightness.
The case that \reff{g:kkt:opt} is infeasible
(i.e., no critical points satisfy the constraints
$c_{in} \geq 0, \psi \geq 0$)
can also be detected by solving the relaxations
\reff{momrlx:k}-\reff{rlxsos:k}.

\begin{theorem} \label{thm:ft}
Under Assumption~\ref{ass:lmd:p},
the relaxations \reff{momrlx:k}-\reff{rlxsos:k}
have the following properties:

\bit

\item [i)] If \reff{momrlx:k} is infeasible for some order $k$,
then no critical points satisfy the constraints
$c_{in} \geq 0, \psi \geq 0$,
i.e., \reff{g:kkt:opt} is infeasible.

\item [ii)] Suppose Assumption~\ref{ass:sig} holds.
If \reff{g:kkt:opt} is infeasible,
then the relaxation \reff{momrlx:k}
must be infeasible when the order $k$ is big enough.

\eit
\noindent
In the following, assume \reff{g:kkt:opt} is feasible
(i.e., $f_c < + \infty$). Then, for all $k$ big enough,
\reff{momrlx:k} has a minimizer $y^*$. Moreover,

\bit

\item [iii)] If \reff{ft:Mty} is satisfied for some $t \in [d,k]$,
then $f_k = f_c$.

\item [iv)] If Assumption~\ref{ass:sig} holds
and \reff{g:kkt:opt} has finitely many minimizers, then
every minimizer $y^*$ of \reff{momrlx:k} must satisfy \reff{ft:Mty}
for some $t \in [d,k]$, when $k$ is big enough.

\eit

\end{theorem}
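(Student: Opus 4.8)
The plan is to treat the four items in turn, reusing the machinery already assembled for Theorem~\ref{thm:pf:tight}. For item~i), the key observation is that if $(u,\lmd)\in\mc{K}$ is a critical pair with $c_{in}(u)\geq 0$ and $\psi(u)\geq 0$, then the Dirac tms $y := [u]_{2k}[u]_{2k}^T$-vector (the moment sequence of the atomic measure $\delta_u$) is feasible for \reff{momrlx:k}: all the localizing matrix equalities $L_{c_{eq}}^{(k)}(y)=0$, $L_\phi^{(k)}(y)=0$ hold because $c_{eq}(u)=0$, $\phi(u)=0$ (here I use that Assumption~\ref{ass:lmd:p} gives $\mc{K}_c=\{c_{eq}=0,\phi=0\}$), and the positive-semidefiniteness conditions hold because $c_{in}(u),\psi(u)\geq 0$ and $M_k(y)\succeq 0$ trivially for an atomic measure. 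Contrapositively, infeasibility of \reff{momrlx:k} for even one $k$ forces \reff{g:kkt:opt} to have no feasible point, which is exactly the statement. For item~ii), I would argue that if \reff{g:kkt:opt} is infeasible then $f_c=+\infty$ by convention, and then run the Case~III argument of Theorem~\ref{thm:pf:tight} with $f$ replaced by any constant, say with the target $\gamma$ pushed to $+\infty$: concretely, under Assumption~\ref{ass:sig} the polynomial $\rho$ is negative on all of $\mc{K}_c$ (since every critical point violates the constraints), so $-1-\rho$ vanishes nowhere needed and a Positivstellensatz/Putinar argument on $\mc{K}_2=\{c_{eq}=0,\phi=0,\rho\geq 0\}=\emptyset$ yields $-1\in\mbox{IQ}(c_{eq},c_{in})_{2k}+\mbox{IQ}(\phi,\psi)_{2k}$ for some $k$, which makes the dual \reff{rlxsos:k} unbounded and hence \reff{momrlx:k} infeasible.

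For item~iii), I would invoke the standard flat-extension theorem of Curto–Fialkow and the Henrion–Lasserre extraction: if $y^*$ is a minimizer of \reff{momrlx:k} and \reff{ft:Mty} holds for some $t\in[d,k]$, then $y^*$ (truncated to degree $2t$) admits a representing measure $\mu$ supported on $r=\rank M_t(y^*)$ points, each of which satisfies $c_{eq}=0$, $c_{in}\geq 0$, $\phi=0$, $\psi\geq 0$ — i.e., lies in the feasible set of \reff{g:kkt:opt} — and $\langle f,y^*\rangle=\int f\,d\mu\geq f_c$. Combined with $f_k=\langle f,y^*\rangle\leq f_c$ from \reff{fk<=fc}, this gives $f_k=f_c$. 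The one point needing care is that the degree parameter $d$ in \reff{deg:d} is chosen precisely so that the flat-truncation conclusion of \cite{Nie-ft} applies to the constraint tuple $(c_{eq},c_{in},\phi,\psi)$; I would cite \cite{CuFi05,HenLas05,Nie-ft} for this.

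Item~iv) is the substantive one and the main obstacle. Here I would combine Theorem~\ref{thm:pf:tight} (Case~III gives $f_k=f_c$ for all large $k$, so $f_c<+\infty$ and the feasible set of \reff{momrlx:k} is nonempty and, by a standard coercivity/boundedness argument on the moment cone together with the archimedean-type certificate extracted in the proof, compact — hence a minimizer $y^*$ exists) with an argument that any such minimizer is \emph{flat}. The idea: because \reff{g:kkt:opt} has finitely many minimizers $w_1,\dots,w_r$ and the relaxation is tight, $y^*$ lies in the face of the feasible spectrahedron consisting of moment sequences of measures supported on $\{w_1,\dots,w_r\}$; standard results on optimal moment sequences of exact Lasserre relaxations with finite solution set (as in \cite{Lau07,LLR08,Nie-ft,Nie-opcd}) then show $y^*$ is a convex combination of the atomic moment vectors $[w_i]_{2k}[w_i]_{2k}^T$, whence $\rank M_t(y^*)$ stabilizes at $r$ for $t$ between $d$ and $k$ once $k$ is large enough, giving \reff{ft:Mty}. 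The delicate part is establishing that $y^*$ has \emph{no} mass escaping to critical points $u$ with $f(u)<f_c$ that violate the constraints — this is where Assumption~\ref{ass:sig} is used: the separating polynomial $\rho\in\mbox{Qmod}(c_{in},\psi)$ satisfies $\langle\rho,y^*\rangle\geq 0$ while being negative at every infeasible low-value critical point, which (together with the ideal constraints forcing $y^*$ to be supported on $\mc{K}_1$) rules out such atoms. I expect verifying this support restriction rigorously, and the existence of the minimizer $y^*$, to require the most work; the rest is assembly of known flat-truncation results.
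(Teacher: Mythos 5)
Your arguments for items i)--iii) are essentially the paper's: feasible points of \reff{g:kkt:opt} give feasible truncated moment sequences $[u]_{2k}$ (note the tms is the vector $[u]_{2k}$ evaluated at $u$, not $[u]_{2k}[u]_{2k}^T$), the emptiness of $\{c_{eq}=0,\phi=0,\rho\geq 0\}$ plus the Positivstellensatz yields $-1\in\mbox{Ideal}(c_{eq},\phi)+\mbox{Qmod}(\rho)\subseteq \mbox{IQ}(c_{eq},c_{in})+\mbox{IQ}(\phi,\psi)$ and hence primal infeasibility by weak duality, and iii) is the standard extraction argument. These are fine.

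Item iv) is where your proposal has a genuine gap, and you say so yourself (``I expect verifying this support restriction rigorously \dots to require the most work''). The step you defer is the theorem: it is \emph{not} true in general that a minimizer $y^*$ of a tight moment relaxation is a convex combination of the atomic moment vectors $[w_i]_{2k}$ of the finitely many minimizers --- the higher-degree entries of $y^*$ are only constrained by the localizing conditions, and $y^*$ need not come from any measure at all. That is precisely why flat truncation is a nontrivial certificate and why its necessity requires proof. The paper's resolution is a reduction you do not make: using Assumption~\ref{ass:sig}, replace \reff{g:kkt:opt} by the equivalent problem \reff{opt:rho} with the single inequality $\rho\geq 0$; its hierarchy \reff{mom:k:rho}--\reff{sos:k:rho} is tight by the Case~III argument of Theorem~\ref{thm:pf:tight}; since $\rho\in\mbox{Qmod}(c_{in},\psi)$, every $y$ feasible for \reff{momrlx:k} is feasible for \reff{mom:k:rho}, so $y^*$ is also a minimizer of \reff{mom:k:rho}; and then Theorem~2.6 of \cite{Nie-ft} (necessity of flat truncation for a finitely-convergent hierarchy with finitely many minimizers) applies to \reff{opt:rho} and delivers \reff{ft:Mty}. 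Your sketch gestures at the same family of results but does not supply the mechanism that transfers them to the mixed relaxation \reff{momrlx:k}; without the passage through \reff{opt:rho} (or an equivalent kernel argument on $M_k(y^*)$ via the SOS certificate of $f-f_c$), item iv) is not proved. The existence of a minimizer $y^*$ for large $k$ is also simpler than your proposed compactness argument: tightness gives $f_k^\prm=f_c$, and $[u^*]_{2k}$ attains it for any minimizer $u^*$ of \reff{g:kkt:opt}.
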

\begin{proof}
By Assumption~\ref{ass:lmd:p},
$u$ is a critical point if and only if
$c_{eq}(u) = 0, \phi(u)=0$.

i) For every feasible point $u$ of \reff{g:kkt:opt},
the tms $[u]_{2k}$ (see \S\ref{sc:pre} for the notation) is feasible for
\reff{momrlx:k}, for all $k$.
Therefore, if \reff{momrlx:k} is infeasible for some $k$,
then \reff{g:kkt:opt} must be infeasible.

ii) By Assumption~\ref{ass:sig}, when \reff{g:kkt:opt} is infeasible, the set
\[
\{x\in \re^n:\, c_{eq}(x)=0, \phi(x)=0, \, \rho(x) \geq 0 \}
\]
is empty. It has a single inequality.
By the Positivstellensatz~\cite[Corollary~4.4.3]{BCR},
it holds that
$
-1 \in  \mbox{Ideal}(c_{eq}, \phi) + \mbox{Qmod}(\rho).
$
By Assumption~\ref{ass:sig},
\[
\mbox{Ideal}(c_{eq}, \phi) + \mbox{Qmod}(\rho) \subseteq
\mbox{IQ}(c_{eq}, c_{in}) + \mbox{IQ}(\phi, \psi).
\]
Thus, for all $k$ big enough, \reff{rlxsos:k} is unbounded from above.
Hence, \reff{momrlx:k} must be infeasible, by weak duality.

When \reff{g:kkt:opt} is feasible,
$f$ achieves finitely many values on $\mc{K}_c$,
so \reff{g:kkt:opt} must achieve its optimal value $f_c$.
By Theorem~\ref{thm:pf:tight},
we know that $f_k = f_k^\prm = f_c$
for all $k$ big enough.
For each minimizer $u^*$ of \reff{g:kkt:opt},
the tms $[u^*]_{2k}$ is a minimizer of \reff{momrlx:k}.

iii) If \reff{ft:Mty} holds, we can get $r := \rank \, M_t (y^*)$
minimizers for \reff{g:kkt:opt} \cite{CuFi05,HenLas05},
say, $u_1, \ldots, u_r$,
such that $f_k = f(u_i)$ for each $i$. Clearly, $f_k = f(u_i) \geq f_c$.
On the other hand, we always have $f_k \leq f_c$.
So, $f_k = f_c$.

iv) By Assumption~\ref{ass:sig}, \reff{g:kkt:opt} is equivalent to the problem
\be \label{opt:rho}
\left\{\baray{rl}
 \min & f(x)  \\
 s.t. &  c_{eq}(x) = 0, \,  \phi(x) = 0, \, \rho(x) \geq 0.
\earay \right.
\ee
The optimal value of \reff{opt:rho} is also $f_c$.
Its $k$th order Lasserre's relaxation is
\be \label{mom:k:rho}
\left\{\baray{rl}
\gamma_k^\prm  := \min & \langle f, y \rangle \\
 s.t. & \langle 1, y \rangle = 1,  M_k(y) \succeq 0, \\
      &  L_{c_{eq}}^{(k)}(y) = 0, \,
       L_{\phi}^{(k)}(y) = 0,   \,  L_{\rho}^{(k)}(y) \succeq 0.
\earay \right.
\ee
Its dual optimization problem is
\be  \label{sos:k:rho}
\left\{\baray{rl}
\gamma_k := \, \max & \gamma  \\
 s.t. &  f-\gamma \in \mbox{Ideal}(c_{eq},\phi)_{2k} +
             \mbox{Qmod}(\rho)_{2k}.
\earay \right.
\ee
By repeating the same proof as for Theorem~\ref{thm:pf:tight}(iii),
we can show that
\[
\gamma_k = \gamma_k^\prm = f_c
\]
for all $k$ big enough. Because
$
\rho \in  \mbox{Qmod}(c_{in}, \psi),
$
each $y$ feasible for \reff{momrlx:k}
is also feasible for \reff{mom:k:rho}.
So, when $k$ is big, each $y^*$ is also a minimizer of \reff{mom:k:rho}.
The problem \reff{opt:rho} also has finitely many minimizers.
By Theorem~2.6 of \cite{Nie-ft},
the condition \reff{ft:Mty} must be satisfied
for some $t\in [d,k]$, when $k$ is big enough.
\end{proof}

If \reff{g:kkt:opt} has infinitely many minimizers, then
the condition \reff{ft:Mty} is typically not satisfied.
We refer to \cite[\S6.6]{Lau}.

\section{Polyhedral constraints}
\label{sc:PH}

In this section, we assume the feasible set of \reff{pop:c(x)>=0} is the polyhedron
\[
P :=\{ x \in \re^n \, \mid \, Ax-b \geq 0\},
\]
where $A = \bbm a_1 & \cdots & a_m \ebm^T \in \re^{m \times n}$,
$b = \bbm b_1 & \cdots & b_m \ebm^T \in \re^m$.
This corresponds to that $\mc{E}=\emptyset$, $\mc{I}=[m]$,
and each $c_i(x) = a_i^Tx - b_i$. Denote
\be \label{PH:DClmd}
%D(x) \, := \, \bbm c_1(x) &  & \\  & \ddots &  \\  &  &  c_m(x) \ebm, \quad
D(x) \, := \, \diag( c_1(x), \ldots, c_m(x)), \quad
C(x) \, := \, \bbm  A^T  \\  D(x) \ebm.
\ee
The Lagrange multiplier vector
$\lmd := \bbm \lmd_1 & \cdots & \lmd_m \ebm^T$
satisfies
\be \label{AD*lmd=gf}
\bbm  A^T  \\  D(x) \ebm \lmd = \bbm \nabla f(x) \\ 0 \ebm.
\ee
If $\rank\, A= m$, we can express $\lmd$ as
\be \label{lmd=inv(AAt)Agf}
\lmd =  (AA^T)^{-1} A \nabla f(x).
\ee
If $\rank\, A < m$, how can we express $\lmd$ in terms of $x$?
In computation, we often prefer a polynomial expression.
If there exists $L(x) \in \re[x]^{m \times (n+m)}$ such that
\be \label{polyH:L(x)A(x)=Id}
L(x) C(x)  = I_m,
\ee
then we can get
\[
\lmd \, = \,  L(x) \bbm \nabla f(x) \\ 0 \ebm   \,=\,   L_1(x) \nabla f(x),
\]
where $L_1(x)$ consists of the first $n$ columns of $L(x)$.
In this section, we characterize when such $L(x)$ exists
and give a degree bound for it.

The linear function $Ax-b$ is said to be {\it nonsingular} if
$
\rank \,  C(u) = m
$
for all $u \in \cpx^n$ (also see Definition~\ref{def:nonsig}).
This is equivalent to that for every $u$,
if $J(u) = \{ i_1, \ldots, i_k \}$
(see \reff{df:J(u)} for the notation), then
$
a_{i_1}, \ldots, a_{i_k}
$
are linearly independent.

\begin{pro}  \label{pr:H:Lx}
The linear function $Ax-b$ is nonsingular if and only if
there exists a matrix polynomial $L(x)$ satisfying
\reff{polyH:L(x)A(x)=Id}. Moreover, when $Ax-b$ is nonsingular,
we can choose $L(x)$ in \reff{polyH:L(x)A(x)=Id}
with $\deg(L) \leq m-\rank \,A$.
\end{pro}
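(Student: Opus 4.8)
The plan is to prove both directions of the equivalence and then track degrees to get the quantitative bound. For the "if" direction, suppose $L(x)$ satisfies $L(x)C(x) = I_m$ for all $x$. If $Ax-b$ were singular, there would be a point $u \in \cpx^n$ with $\rank\, C(u) < m$, so $C(u)$ would have a nontrivial left null vector, contradicting $L(u)C(u) = I_m$. Hence $\rank\, C(u) = m$ for all $u$, i.e., $Ax-b$ is nonsingular. This direction is immediate.

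For the "only if" direction, assume $Ax-b$ is nonsingular, so $\rank\, C(u) = m$ for every $u \in \cpx^n$. First I would dispose of the full-row-rank case: if $\rank\, A = m$, take $L(x) = \begin{bmatrix} (AA^T)^{-1}A & 0 \end{bmatrix}$, a constant matrix (degree $0 = m - \rank\, A$). In general, write $r := \rank\, A < m$. The idea is to build $L(x)$ by elimination: after a constant change of coordinates on the rows (permuting and taking linear combinations of the $a_i$), we may assume the first $r$ rows of $A$ are linearly independent and span the row space, so the last $m-r$ rows of $A$ are linear combinations of the first $r$. The key observation is that nonsingularity forces the diagonal entries $c_j(x)$ corresponding to the "dependent" rows to supply the missing rank. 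Concretely, the submatrix of $C(x)$ formed by the first $r$ rows of $A^T$ together with the $m-r$ rows $e_j^T D(x)$ for $j$ in the dependent index set is, after the row reduction, lower-block-triangular with an invertible $r\times r$ constant block and a diagonal $(m-r)\times(m-r)$ block $\diag(c_j(x))_{j \text{ dependent}}$; nonsingularity of $C$ at every $u$ means this diagonal block is invertible, but since each $c_j$ is only a linear polynomial, invertibility at every point forces these $c_j$ to be nonzero \emph{constants}. Inverting a lower-triangular matrix with a constant invertible block and a constant-on-diagonal block produces a matrix polynomial, and one checks that the degree increases by at most one at each of the $m-r$ elimination steps, yielding $\deg(L) \le m - r = m - \rank\, A$.

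The main obstacle is making the elimination argument rigorous when the dependent rows do not reduce to something so clean — in particular, after row-reducing against the first $r$ rows of $A^T$, a dependent row $a_j^T$ becomes zero in its $A^T$-part but its $D(x)$-part is $c_j(x)e_j^T$, and one must verify that the resulting $(m-r)\times(m-r)$ "Schur complement" block in the $x_{i}$-free coordinates is exactly $\diag(c_j(x))$ over the dependent $j$'s with no cross terms, then argue from $\rank\, C(u)=m$ for all $u$ that each such $c_j$ is a nonzero constant. Once that structural fact is established, writing down $L(x)$ explicitly via back-substitution and bounding its degree is routine bookkeeping. I would organize the write-up as: (1) the easy "if" direction; (2) setup and reduction to the case $\rank\, A = r < m$ with a normalized row basis; (3) the structural lemma that the dependent $c_j$'s are nonzero constants; (4) explicit construction of $L(x)$ by triangular back-substitution with the degree count.
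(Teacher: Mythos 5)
Your ``if'' direction is fine, and so is the dispatch of the full-row-rank case. But the heart of your ``only if'' argument --- the structural lemma in step (3), asserting that after row reduction the $c_j$'s attached to the linearly dependent rows of $A$ must be \emph{nonzero constants} --- is false, and the simplex already refutes it. Take $n=2$, $m=3$, constraints $x_1\geq 0$, $x_2\geq 0$, $1-x_1-x_2\geq 0$. Here $\rank A=2$, the third row $a_3=-a_1-a_2$ is dependent, and $c_3(x)=1-x_1-x_2$ is certainly not a constant; yet the tuple is nonsingular, because nonsingularity only requires that constraints with linearly \emph{dependent} gradients can never be simultaneously \emph{active} (here $x_1=x_2=0$ forces $c_3=1\neq 0$), not that the dependent $c_j$'s never vanish. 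Since $C(x)$ is a tall $(n+m)\times m$ matrix, $\rank C(u)=m$ does not require your $(m-r)\times(m-r)$ diagonal block to be invertible at every point --- the rank can be supplied by different rows at different points $u$ --- so the step ``invertibility at every point forces these $c_j$ to be constants'' does not follow. A second, lesser problem: your normalization by ``taking linear combinations of the $a_i$'' amounts to column operations on $C(x)$, which destroy the diagonal structure of the $D(x)$ block (a combined column has bottom part $\sum_i\beta_i c_i(x)e_i$, not $c(x)e_j$), so the clean triangular shape you back-substitute against is not actually available.

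The correct mechanism, which your outline is missing, is a gluing/partition-of-unity argument. From nonsingularity one extracts, for each dependent collection of gradients, the \emph{inconsistency} of the corresponding linear system $c_{i_1}=\cdots=c_{i_{k+1}}=0$, hence an affine identity $\sum_j\mu_j c_{i_j}(x)=1$. These identities are combined (by induction on $m$) into $\sum_{I}\nu_I\, c_I(x)=1$, where $I$ ranges over the $(m-n)$-subsets whose complementary rows form an invertible matrix and $c_I=\prod_{i\in I}c_i$. For each such $I$ one writes down an explicit matrix polynomial $L_I(x)$ with $L_I(x)C(x)=c_I(x)I_m$ (clearing the denominators $c_i^{-1}$, $i\in I$, of the obvious rational left inverse), and then $L=\sum_I\nu_I L_I$ is the desired global left inverse; the degree bound $m-\rank A$ comes from $\deg c_I=m-n$ after reducing to $\rank A=n$. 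Your plan cannot be repaired by better bookkeeping in step (4); it needs this gluing idea to replace step (3).
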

\begin{proof}
Clearly, if \reff{polyH:L(x)A(x)=Id} is satisfied by some $L(x)$,
then $\rank\,C(u) \geq m$ for all $u$.
This implies that $Ax-b$ is nonsingular.

Next, assume that $Ax-b$ is nonsingular.
We show that \reff{polyH:L(x)A(x)=Id}
is satisfied by some $L(x) \in \re[x]^{m\times (n+m)}$ with degree
$\leq m-\rank \,A$. Let $r = \rank\,A$.
Up to a linear coordinate transformation,
we can reduce $x$ to a $r$-dimensional variable.
Without loss of generality, we can assume that $\rank\, A = n$
and $m \geq n$.

For a subset $I := \{i_1, \ldots, i_{m-n}\}$ of $[m]$, denote
\[
c_I(x) :=  \prod_{ i \in I } c_i(x), \quad
E_I(x):= c_I(x) \cdot
%\bbm c_{i_1}(x)^{-1} &  & \\  & \ddots  & \\ &  & c_{i_{m-n}}(x)^{-1} \ebm,
\diag ( c_{i_1}(x)^{-1}, \ldots,  c_{i_{m-n}}(x)^{-1} ),
\]
%\[
%D_I(x):=   \bbm c_{i_1}(x)  &  & \\  &  \ddots  & \\  &  & c_{i_{m-n}}(x) \ebm, \quad
%A_I = \bbm  \big( a_{i_1} \big)^T \\ \vdots \\ \big( a_{i_{n-m}} \big)^T \ebm.
%\]
\[
D_I(x):=   \diag( c_{i_1}(x), \ldots,  c_{i_{m-n}}(x) ), \quad
A_I = \bbm  a_{i_1} & \cdots & a_{i_{n-m}} \ebm^T.
\]
For the case that $I=\emptyset$ (the empty set),
we set $c_{\emptyset}(x) = 1$. Let
\[
V = \{ I \subseteq [m]: \,\, |I|=m-n, \, \rank \, A_{[m]\backslash I} = n \}.
\]
\noindent
{\bf Step I:} \,
For each $I \in V$, we construct a matrix polynomial
$L_I(x)$ such that
\be \label{eq:LIC=Im}
L_I(x)  C(x)  = c_I(x)  I_m.
\ee
%We index the rows of $L_I(x)$ by $i \in [m]$
%and index the columns by $j \in [n+m]$.
The matrix $L_I:= L_I(x)$ satisfying \reff{eq:LIC=Im}
can be given by the following $2\times 3$ block matrix
($L_I(\mc{J},\mc{K})$ denotes the submatrix
whose row indices are from $\mc{J}$
and whose column indices are from $\mc{K}$):
%
%\be \label{table:LI(x)}
%\left(\begin{array}{cc|c|c|c}
%&          &  \multicolumn{3}{c}{ \rm column \quad  indices } \\  \cmidrule{3-5}
%& \ L_I(x) &  [n]  &  n+I  & n + [m] \backslash I \\  \hline
% {\rm row} & I   &  0   &   E_I(x)     &      0        \\
%{\rm indices}  & \text{$ [m] \backslash I $ }   &  c_I(x) \cdot \big( A_{[m]\backslash I} \big)^{-T}
%  & -\big( A_{[m]\backslash I} \big)^{-T} \big( A_{I} \big)^T E_I(x)  & 0     \\
%\end{array} \right).
%\ee
%
%\be \label{table:LI(x)}
%\left(\begin{array}{c|c|c|c}
%\ L_I(x)(\mc{J},\mc{K}) & \mc{K} = [n]  & \mc{K} =  n+I  & \mc{K} =  n + [m] \backslash I \\  \hline
%\mc{J} =  I   &  0   &   E_I(x)     &      0        \\
%\mc{J} =  \text{$ [m] \backslash I $ }   &  c_I(x) \cdot \big( A_{[m]\backslash I} \big)^{-T}
%  & -\big( A_{[m]\backslash I} \big)^{-T} \big( A_{I} \big)^T E_I(x)  & 0     \\
%\end{array} \right).
%\ee
%
\be \label{table:LI(x)}
\left(\begin{array}{c|ccc}
\mc{J} \, \backslash \, \mc{K}   &   [n]  &   n+I  &    n + [m] \backslash I \\  \hline
 I   &  0   &   E_I(x)     &      0        \\
\text{$ [m] \backslash I $ }   &  c_I(x) \cdot \big( A_{[m]\backslash I} \big)^{-T}
  & -\big( A_{[m]\backslash I} \big)^{-T} \big( A_{I} \big)^T E_I(x)  & 0     \\
\end{array} \right).
\ee
Equivalently, the blocks of $L_I$ are:
\[
L_I\big(I, [n]\big)= 0, \quad
L_I\big(I, n + [m]\backslash I \big) = 0, \quad
L_I\big([m]\backslash I, n + [m]\backslash I \big) = 0,
\]
\[
L_I \big(I, n+I\big)  = E_I(x), \quad
L_I \big([m]\backslash I, [n]\big)  = c_I(x) \big( A_{[m]\backslash I} \big)^{-T},
\]
\[
L_I \big( [m]\backslash I, n+I \big)  =
-\big( A_{[m]\backslash I} \big)^{-T} \big( A_{I} \big)^T.
\]
For each $I \in V$, $ A_{[m]\backslash I}$ is invertible.
The superscript $^{-T}$ denotes the inverse of the transpose.
Let $G:=L_I(x) C(x)$, then one can verify that
\[
G(I,I) = E_I(x) D_I(x)  = c_I(x)   I_{m-n}, \quad
G(I,[m]\backslash I) = 0,
\]
\[
G([m]\backslash I, [m]\backslash I) =
\bbm   c_I(x) \big( A_{[m]\backslash I} \big)^{-T}
& -A_{[m]\backslash I}^{-T} A_{I}^{T}  E_I(x)   \ebm
\bbm  \big( A_{[m]\backslash I} \big)^T  \\ 0 \ebm
= c_I(x) I_n.
\]
\[
G([m]\backslash I,   I) =
\bbm   c_I(x) \big( A_{[m]\backslash I} \big)^{-T}
& -\big( A_{[m]\backslash I} \big)^{-T} \big( A_{I} \big)^T E_I(x)   \ebm
\bbm A_{I}^T  \\ D_I(x) \ebm  = 0.
\]
This shows that the above $L_I(x)$ satisfies \reff{eq:LIC=Im}.

\bigskip
\noindent
{\bf Step II:} \, We show that there exist real scalars $\nu_I$ satisfying
\be \label{eq:nuIcI=1}
\sum_{ I \in V} \nu_I c_I(x) = 1.
\ee
This can be shown by induction on $m$.
\bit

\item When $m=n$, $V = \emptyset$ and $c_{\emptyset}(x) = 1$,
so \reff{eq:nuIcI=1} is clearly true.

\item When $m > n$,  let
\be \label{set:N}
N  \, := \, \{ i \in [m] \, \mid \,  \rank\, A_{[m]\backslash \{i\} } = n \}.
\ee
For each $i \in N$, let $V_i$ be the set of all
$I' \subseteq [m]\backslash \{i\}$ such that
$|I'|=m-n-1$ and $\rank \, A_{[m]\backslash (I'\cup \{i\}) } = n$.
For each $i \in N$, by the assumption, the linear function
$
A_{m \backslash \{i\} } x - b_{m \backslash \{i\}}
$
is nonsingular. By induction,
there exist real scalars $\nu_{I'}^{(i)}$ satisfying
\be \label{nuIcI:i}
\sum_{ I' \in V_i } \nu_{I'}^{(i)} c_{I'}(x) = 1.
\ee
Since $\rank \, A = n$, we can generally assume that
$\{a_1, \ldots, a_n\}$ is linearly independent.
So, there exist scalars $\af_1, \ldots, \af_n$ such that
\[
a_m = \af_1 a_1 + \cdots + \af_n a_n.
\]
If all $\af_i =0$, then $a_m = 0$,
and hence $A$ can be replaced by its first $m-1$ rows.
So, \reff{eq:nuIcI=1} is true by the induction.
In the following, suppose at least one $\af_i \ne 0$ and write
\[
\{ i: \, \af_i \ne 0 \} =
\{ i_1, \ldots, i_k \}.
\]
%
%Let the set of all $i$ with $\af_i \ne 0$ be
%$\{ i_1, \ldots, i_k \}$.
%
Then, $a_{i_1}, \ldots, a_{i_k}, a_{m}$
are linearly dependent. For convenience, set $i_{k+1} := m$.
Since $Ax-b$ is nonsingular, the linear system
\[
c_{i_1}(x) = \cdots = c_{i_k}(x)= c_{i_{k+1}}(x) = 0
\]
has no solutions. Hence, there exist real scalars
$\mu_1, \ldots, \mu_{k+1}$ such that
\[
\mu_{1} c_{i_1}(x) +  \cdots + \mu_{k} c_{i_k}(x) + \mu_{k+1} c_{i_{k+1}}(x) = 1.
\]
This above can be implied by echelon's form for inconsistent linear systems.
Note that $i_1, \ldots, i_{k+1} \in N$.
For each $j = 1,\ldots, k+1$, by \reff{nuIcI:i},
\[
\sum_{ I' \in V_{i_j} }   \nu_{I'}^{(i_j)} c_{I'}(x) = 1.
\]
Then, we can get
\[
1 =  \sum_{j=1}^{k+1} \mu_{j} c_{i_j}(x) =
\sum_{j=1}^{k+1} \mu_{j} \sum_{ I' \in V_{i_j}}
\nu_{I'}^{(i_j)} c_{i_j}(x) c_{I'}(x)  =
\]
\[
\sum_{ I = I' \cup \{i_j\},  I' \in V_{i_j}, 1 \leq j \leq k+1 }
\nu_{I'}^{(i_j)} \mu_{j}   c_{I}(x).
\]
Since each $I' \cup \{i_j\} \in V$,
\reff{eq:nuIcI=1} must be satisfied by some scalars $\nu_I$.

\eit

\noindent
{\bf Step III:} \, For $L_I(x)$ as in \reff{eq:LIC=Im},
we construct $L(x)$ as
\be \label{L=sum:nuIcILi}
L(x) := \sum_{ I \in V } \nu_I c_I(x) L_I(x).
\ee
Clearly, $L(x)$ satisfies \reff{polyH:L(x)A(x)=Id} because
\[
L(x) C(x) =  \sum_{ I \in V } \nu_I L_I(x) C(x) =
\sum_{ I \in V } \nu_I c_I(x) I_m = I_m.
\]
Each $L_I(x)$ has degree $\le m-n$,
so $L(x)$ has degree $\leq m-n$.
\end{proof}

Proposition~\ref{pr:H:Lx} characterizes
when there exists $L(x)$ satisfying \reff{polyH:L(x)A(x)=Id}.
When it does, a degree bound for $L(x)$ is $m-\rank\,A$.
Sometimes, its degree can be smaller than that,
as shown in Example~\ref{exm:box}.
For given $A,b$, the matrix polynomial $L(x)$
satisfying \reff{polyH:L(x)A(x)=Id}
can be determined by linear equations,
which are obtained by matching coefficients on both sides.
In the following, we give some examples of $L(x)C(x)=I_m$
for polyhedral sets.

\begin{exm}  \label{exm:simplex}
Consider the simplicial set
\[
x_1 \geq 0, \, \ldots, \, x_n \geq 0, \, 1-e^Tx \geq 0.
\]
%
%We have
%\[
%C(x) =
%\left[ \baray{rr}
%I_n &  e\\ \diag(x) & \\  & 1-e^Tx
%\earay \right].
%\]
%
The equation $L(x)C(x) = I_{n+1}$ is satisfied by
\[
L(x) =
\bbm
 1 - x_1 &    -x_2 & \cdots  & -x_n   & 1     & \cdots & 1 \\
    -x_1 & 1 - x_2 & \cdots  & -x_n   & 1     & \cdots & 1  \\
  \vdots & \vdots  & \ddots  & \vdots &\vdots & \vdots & \vdots \\
    -x_1 &   -x_2  & \cdots  & 1 -x_n & 1     & \cdots & 1 \\
    -x_1 &   -x_2  & \cdots  &  -x_n  & 1     & \cdots & 1
\ebm.
\]

\end{exm}

\begin{exm}  \label{exm:box}
Consider the box constraint
\[
x_1 \geq 0, \, \ldots, \, x_n \geq 0, \, 1-x_1 \geq 0, \, \ldots, \, 1-x_n \geq 0.
\]
%
%We have
%\[
%C(x) =
%\bbm
%I_n  &  -I_n \\  \diag(x) &  \\  & I_n - \diag(x)
%\ebm.
%\]
%
The equation $L(x)C(x) = I_{2n}$ is satisfied by
\[
L(x) =
\bbm
I_n- \diag(x)  & I_n  &  I_n \\
   - \diag(x)  & I_n  &  I_n \\
\ebm.
\]
\end{exm}

\begin{exm}
Consider the polyhedral set
\[
1-x_4 \geq 0, \, x_4-x_3 \geq 0, \, x_3-x_2 \geq 0, \, x_2-x_1 \geq 0, \, x_1+1 \geq 0.
\]
%
%We have
%\[
%C(x) = \bbm
% 0 &  0  &   0   &  -1 &  1 \\
% 0 &  0  &  -1   &   1 &  0 \\
% 0 & -1  &   1   &   0 &  0 \\
%-1 &  1  &   0   &   0 &  0 \\
%1-x_4 &  0  &  0  &  0 &  0  \\
%  0  & x_4-x_3  & 0  & 0 &  0 \\
%  0  & 0 & x_3-x_2  & 0  & 0   \\
%  0  &   0  & 0  & x_2-x_1 & 0 \\
%  0  &   0  &  0 &  0   & x_1+ 1
%\ebm.
%\]
%
The equation $L(x)C(x) = I_{5}$ is satisfied by
\[
L(x) =  \frac{1}{2}
\left[\begin{array}{rrrrrrrrr}  - x_{1} - 1 &  - x_{2} - 1 &  - x_{3} - 1 &  - x_{4} - 1 & 1 & 1 & 1 & 1 & 1\\  - x_{1} - 1 &  - x_{2} - 1 &  - x_{3} - 1 & 1 - x_{4} & 1 & 1 & 1 & 1 & 1\\  - x_{1} - 1 &  - x_{2} - 1 & 1 - x_{3} & 1 - x_{4} & 1 & 1 & 1 & 1 & 1\\  - x_{1} - 1 & 1 - x_{2} & 1 - x_{3} & 1 - x_{4} & 1 & 1 & 1 & 1 & 1\\ 1 - x_{1} & 1 - x_{2} & 1 - x_{3} & 1 - x_{4} & 1 & 1 & 1 & 1 & 1 \end{array}\right].
\]
%%%%%%%%%%%%%%%%%%%%%%%%%%
\iffalse

clear all
syms x_1 x_2 x_3 x_4
x = [x_1; x_2; x_3; x_4]; n = length(x);
ccxx = [1-x(4), x(4)-x(3), x(3)-x(2), x(2)-x(1), x(1)+1];
matLx = left_inv_KKTx(ccxx, x, 1, 1),

\fi
%%%%%%%%%%%%%%%%%%%%%%%%%%%%%%%%

\end{exm}

\begin{exm}
Consider the polyhedral set
\[
1+x_1 \geq 0, \, 1-x_1 \geq 0, \, 2-x_1-x_2 \geq 0, \, 2-x_1 +x_2 \geq 0.
\]
%
%We have
%\[
%C(x) = \bbm
%1  &  -1  &  -1   &  -1  \\
%0  &   0  &  -1   &   1  \\
%1 + x_1  &  0  &  0  & 0 \\
%0  &  1-x_1 &  0  &  0 \\
%0  &  0  & 2-x_1-x_2 & 0 \\
%0  &  0  &  0  &  2-x_1+x_2
%\ebm.
%\]
%
The matrix $L(x)$ satisfying $L(x)C(x) = I_4$ is {\small
\[
\frac {1}{6}
\left[\begin{array}{rrrrrr} {x_{1}}^2 - 3\, x_{1} + 2 & x_{1}\, x_{2} - x_{2} & 4 - x_{1} & 2 - x_{1} & 1 - x_{1} & 1 - x_{1}\\ 3\, {x_{1}}^2 - 3\, x_{1} - 6 & 3\, x_{2} + 3\, x_{1}\, x_{2} & 6 - 3\, x_{1} & - 3\, x_{1} &  - 3\, x_{1} - 3 &  - 3\, x_{1} - 3\\ 1 - {x_{1}}^2 &  - 2\, x_{2} - x_{1}\, x_{2} - 3 & x_{1} - 1 & x_{1} + 1 & x_{1} + 2 & x_{1} + 2\\ 1 - {x_{1}}^2 & 3 - x_{1}\, x_{2} - 2\, x_{2} & x_{1} - 1 & x_{1} + 1 & x_{1} + 2 & x_{1} + 2 \end{array}\right].
\]
}
%%%%%%%%%%%%%%%%%%%%%%%%%%
\iffalse

clear all
syms x_1 x_2
x = [x_1; x_2]; n = length(x);
a1 = round(5*randn(1,n));
a2 = round(5*randn(1,n));
%ccxx = [ a1*x, a2*x, (3*a1+a2)*x-5, (a1-5*a2)*x+7];
ccxx = [1+x(1), 1-x(1),2-x(1)-x(2),2-x(1)+x(2)];
matLx = left_inv_KKTx(ccxx, x, 1, 1),

\fi
%%%%%%%%%%%%%%%%%%%%%%%%%%%%%%%%

\end{exm}

\section{General constraints}
\label{sc:gencon}

We consider general nonlinear constraints as in \reff{pop:c(x)>=0}.
The critical point conditions are in \reff{eq:cfx:lmd}.
%
%\[
%\nabla f(x) =  \sum_{i \in \mc{E} \cup \mc{I} } \lmd_i \nabla c_i(x), \quad
%c_i(x) = 0 ( i \in \mc{E}),
%\]
%\[
%\lmd_i \geq 0 ( i \in \mc{I} ), \quad
%\lmd_i c_i (x) = 0  ( i \in \mc{I} ).
%\]
%
We discuss how to express Lagrange multipliers $\lmd_i$
as polynomial functions in $x$
on the set of critical points.

Suppose there are totally $m$
equality and inequality constraints, i.e.,
\[
\mc{E} \cup \mc{I} = \{1, \ldots, m \}.
\]
If $(x,\lmd)$ is a critical pair, then
$\lmd_i c_i(x) = 0$ for all $i \in \mc{E} \cup \mc{I}$.
So, the Lagrange multiplier vector
$\lmd := \bbm \lmd_1 & \cdots & \lmd_m \ebm^T$
satisfies the equation
\be \label{C(x)*lmd=gf(x)}
\underbrace{
\bbm
\nabla c_1(x) & \nabla c_2(x) & \cdots & \nabla c_m(x) \\
 c_1(x) & 0 & \cdots & 0 \\
 0   &  c_2(x) & 0  & 0 \\
 \vdots & \vdots & \ddots & \vdots \\
 0  &  0  & \cdots &  c_m(x)  \\
\ebm
}_{C(x)}
\lmd  =
\bbm \nabla f(x) \\ 0 \\ 0 \\ \vdots \\ 0  \ebm.
\ee
Let $C(x)$ be as in above. If there exists
$L(x) \in \re[x]^{m \times (m+n)}$ such that
\be \label{eq:L(x)C(x)=Im}
L(x) C(x) = I_m,
\ee
then we can get
\be \label{lmd=L1*gf(x)}
\lmd   = L(x)  \bbm \nabla f(x) \\ 0 \ebm =    L_1(x)  \nabla f(x),
\ee
where $L_1(x)$ consists of the first $n$ columns of $L(x)$.
Clearly, \reff{eq:L(x)C(x)=Im}
implies that Assumption~\ref{ass:lmd:p} holds.
This section characterizes when such $L(x)$ exists.

\begin{defi} \label{def:nonsig}
The tuple $c := (c_1, \ldots, c_m)$ of constraining polynomials
is said to be {\it nonsingular} if
$\rank\, C(u) = m$ for every $u \in \cpx^n$.
\end{defi}

Clearly, $c$ being nonsingular
is equivalent to that for each $u \in \cpx^n$,
if $J(u) = \{ i_1, \ldots, i_k \}$
(see \reff{df:J(u)} for the notation), then the gradients
$
\nabla c_{i_1}(u), \ldots, \nabla c_{i_k}(u)
$
are linearly independent.
Our main conclusion is that \reff{eq:L(x)C(x)=Im}
holds if and only if the tuple $c$ is nonsingular.

\begin{prop}  \label{pro:L(x)W(x)=Idt}
(i) For each $W(x) \in \cpx[x]^{s \times t}$ with $s \geq t$,
$\rank\,W(u) = t$ for all $u \in \cpx^n$
if and only if there exists
$P(x) \in \cpx[x]^{t \times s}$ such that
\[
P(x) W(x) = I_t.
\]
Moreover, for $W(x) \in \re[x]^{s \times t}$, we can choose
$P(x) \in \re[x]^{t \times s}$ for the above. \\
(ii) The constraining polynomial tuple
$c$ is nonsingular if and only if
there exists $L(x) \in \re[x]^{m \times (m+n)}$
satisfying \reff{eq:L(x)C(x)=Im}.
\end{prop}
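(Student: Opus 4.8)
The plan is to establish part (i) directly and then obtain part (ii) as an immediate special case. The easy implication in (i) is quick: if $P(x)W(x) = I_t$, then evaluating at any $u \in \cpx^n$ gives $P(u)W(u) = I_t$, so $\rank W(u) \ge t$, and since $W(u)$ has exactly $t$ columns, $\rank W(u) = t$. (The same computation covers the real case once $P$ is known to be choosable over $\re[x]$.)

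For the converse in (i), I would argue via minors and the Nullstellensatz. The hypothesis $\rank W(u) = t$ for all $u \in \cpx^n$ is equivalent to the statement that the $t \times t$ minors of $W(x)$ have no common zero in $\cpx^n$, because $\rank W(u) < t$ holds precisely when all such minors vanish at $u$. Enumerate the size-$t$ subsets $I \subseteq [s]$; let $W_I(x)$ denote the corresponding $t \times t$ submatrix of $W(x)$ and $d_I(x) := \det W_I(x)$. Hilbert's Nullstellensatz then yields polynomials $g_I(x)$ with $\sum_I g_I(x) d_I(x) = 1$. For each $I$, the adjugate identity gives $\adj{W_I(x)} \, W_I(x) = d_I(x) I_t$, where $\adj{W_I(x)}$ is a polynomial matrix. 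Letting $P_I(x) \in \cpx[x]^{t \times s}$ be the matrix whose columns indexed by $I$ equal $\adj{W_I(x)}$ and whose remaining columns are zero, we get $P_I(x) W(x) = d_I(x) I_t$. Setting $P(x) := \sum_I g_I(x) P_I(x)$ then gives $P(x) W(x) = \big( \sum_I g_I(x) d_I(x) \big) I_t = I_t$, as desired.

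To get the real refinement, note that when $W(x)$ has real entries so do all $d_I(x)$ and all entries of $\adj{W_I(x)}$; the only a priori complex objects are the $g_I$. Since $1$ and all $d_I$ are real, taking real parts in $1 = \sum_I g_I(x) d_I(x)$ gives $1 = \sum_I \mathrm{Re}(g_I)(x)\, d_I(x)$ with $\mathrm{Re}(g_I) \in \re[x]$, so $P(x)$ can be taken in $\re[x]^{t \times s}$.

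Finally, part (ii) follows by applying part (i) with $W(x) = C(x)$ from \reff{C(x)*lmd=gf(x)}, which lies in $\re[x]^{(m+n) \times m}$ and has $m+n \ge m$: the condition $\rank C(u) = m$ for all $u \in \cpx^n$ is exactly nonsingularity of $c$ (Definition~\ref{def:nonsig}), and the ``Moreover'' part of (i) produces $L(x) := P(x) \in \re[x]^{m \times (m+n)}$ with $L(x)C(x) = I_m$, i.e.\ \reff{eq:L(x)C(x)=Im}. I expect the only genuine content is the reduction ``full column rank everywhere $\Leftrightarrow$ the $t \times t$ minors have no common zero'' together with the invocation of the Nullstellensatz; the adjugate bookkeeping and the real-part trick are routine.
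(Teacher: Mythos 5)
Your proof is correct, but it takes a genuinely different route from the paper's. For the nontrivial direction of (i), the paper performs an iterated column reduction: it applies Hilbert's Weak Nullstellensatz to the entries of the first column $w_1(x)$ (which cannot all vanish at any $u$, since $W(u)$ has full column rank) to produce $\xi_1$ with $\xi_1^Tw_1=1$, uses this to row-reduce $W$ to a matrix with a leading $1$, and repeats $t$ times until a unit upper triangular matrix is reached, obtaining $P$ as a product of elementary-type matrix polynomials $P_{t+1}P_t\cdots P_1$. You instead apply the Nullstellensatz once, to the family of maximal minors $d_I=\det W_I$, whose common zero set is empty precisely by the everywhere-full-rank hypothesis, and then patch together the local left inverses $\adj{W_I}$ via the partition of unity $\sum_I g_I d_I=1$. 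Both arguments are sound; yours is arguably cleaner and has the virtue of mirroring the structure the paper itself uses in the polyhedral case (Proposition~\ref{pr:H:Lx}, where $L=\sum_I\nu_Ic_IL_I$ with $\sum_I\nu_Ic_I=1$ and scalar coefficients $\nu_I$), making the two results visibly instances of one mechanism. The paper's iterative scheme produces $P$ one Nullstellensatz certificate at a time, which is perhaps closer to an algorithm one would implement column by column, but neither approach yields a useful explicit degree bound (both inherit the exponential bounds of the effective Nullstellensatz, as the paper notes in \S\ref{sc:dis}). Your real-coefficient refinement via coefficientwise real parts is equivalent to the paper's replacement of $P(x)$ by $\big(P(x)+\overline{P(x)}\big)/2$, and your derivation of (ii) from (i) is the same as the paper's.
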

\begin{proof}
(i) ``$\Leftarrow$":\,
If $L(x) W(x) = I_t$, then for all $u\in \cpx^n$
\[
t = \rank \, I_t \leq \rank \, W(u) \leq t.
\]
So, $W(x)$ must have full column rank everywhere.

\bigskip
\noindent
``$\Rightarrow$":\,
Suppose $\rank\,W(u) = t$ for all $u\in\cpx^n$. Write $W(x)$ in columns
\[
W(x) = \bbm  w_1(x)  &  w_2(x) & \cdots & w_t(x) \ebm.
\]
Then, the equation $w_1(x)=0$
does not have a complex solution.
By Hilbert's Weak Nullstellensatz \cite{CLO97},
there exists $\xi_1(x) \in \cpx[x]^s$ such that
$\xi_1(x)^T w_1(x) = 1.$
For each $i = 2, \ldots, t$, denote
\[
r_{1,i}(x):= \xi_1(x)^T w_i(x),
\]
then (use $\sim$ to denote row equivalence between matrices)
\[
W(x) \sim
\bbm
1 & r_{1,2}(x) & \cdots & r_{1,t}(x) \\
w_1(x) & w_2(x) & \cdots & w_m(x) \\
\ebm
\sim
W_1(x) :=
\bbm
 1 & r_{1,2}(x) & \cdots & r_{1,m}(x) \\
0  & w_2^{(1)}(x) & \cdots & w_m^{(1)}(x) \\
\ebm,
\]
where each ($i=2,\ldots,m$)
\[
w_i^{(1)}(x) =  w_i(x) - r_{1,i}(x) w_1(x).
\]
So, there exists $P_1(x) \in \re[x]^{(s+1) \times s}$ such that
\[
P_1(x) W(x) = W_1(x).
\]
Since $W(x)$ and $W_1(x)$ are row equivalent,
$W_1(x)$ must also have full column rank everywhere.
Similarly, the polynomial equation
\[
w_2^{(1)}(x) = 0
\]
does not have a complex solution. Again,
by Hilbert's Weak Nullstellensatz \cite{CLO97},
there exists $\xi_2(x) \in \cpx[x]^{s}$ such that
\[
\xi_2(x)^T w_2^{(1)}(x) = 1.
\]
For each $i = 3, \ldots, t$, let $r_{2,i}(x):= \xi_2(x)^T w_2^{(1)}(x)$, then
\[
W_1(x) \sim
\bbm
1 & r_{1,2}(x) &  r_{1,3}(x) & \cdots & r_{1,m}(x) \\
0 & 1  & r_{2,3}(x) & \cdots & r_{2,m}(x) \\
0  & w_2^{(1)}(x) & w_3^{(1)}(x) & \cdots & w_m^{(1)}(x) \\
\ebm
\sim
\]
\[
W_2(x) :=
\bbm
1 & r_{1,2}(x) &  r_{1,3}(x) & \cdots & r_{1,m}(x) \\
0 & 1  & r_{2,3}(x) & \cdots & r_{2,m}(x) \\
0  & 0 & w_3^{(2)}(x) & \cdots & w_m^{(2)}(x) \\
\ebm,
\]
where each ($i=3,\ldots,m$)
\[
w_i^{(2)}(x) =  w_i^{(1)}(x) - r_{2,i}(x) w_2^{(1)}(x).
\]
Similarly, $W_1(x)$ and $W_2(x)$ are row equivalent,
so $W_2(x)$ has full column rank everywhere.
There exists $P_2(x) \in \cpx[x]^{(s+2) \times (s+1)}$ such that
\[
P_2(x) W_1(x) = W_2(x).
\]
Continuing this process, we can finally get
\[
W_2(x) \sim \cdots \sim
W_t(x):=
\bbm
1 & r_{1,2}(x) &  r_{1,3}(x) & \cdots & r_{1,t}(x) \\
0 & 1  & r_{2,3}(x) & \cdots & r_{2,t}(x) \\
0  & 0 & 1 & \cdots & r_{3,t}(x) \\
\vdots &  \vdots & \vdots & \ddots & \vdots \\
0  &  0  &  0  & \cdots &  1 \\
0  &  0  &  0  & \cdots &  0 \\
\ebm.
\]
Consequently, there exists $P_i(x) \in \re[x]^{(s+i) \times (s+i-1)}$
for $i=1,2,\ldots, t$, such that
\[
P_t(x) P_{t-1}(x) \cdots P_1(x) W(x) = W_t(x).
\]
Since $W_t(x)$ is a unit upper triangular matrix polynomial,
there exists $P_{t+1}(x) \in \re[x]^{t \times (s+t)}$ such that
$
P_{t+1}(x) W_t(x) = I_t.
$
Let
\[
P(x) \, := \, P_{t+1}(x) P_t(x) P_{t-1}(x) \cdots P_1(x),
\]
then $P(x) W(x) = I_m$.
Note that $P(x) \in \cpx[x]^{t \times  s}$.

\smallskip
For $W(x) \in \re[x]^{s\times t}$, we can replace $P(x)$ by
$\big(P(x) + \overline{P(x)}\big)/2$
(the $\overline{P(x)}$ denotes the complex conjugate of $P(x)$),
which is a real matrix polynomial.

\smallskip
(ii) The conclusion is implied directly by the item (i).
\end{proof}

In Proposition~\ref{pro:L(x)W(x)=Idt},
there is no explicit degree bound for $L(x)$
satisfying \reff{eq:L(x)C(x)=Im}.
This question is mostly open, to the best of the author's  knowledge.
However, once a degree is chosen for $L(x)$, it can be determined
by comparing coefficients of both sides of \reff{eq:L(x)C(x)=Im}.
This can be done by solving a linear system. In the following, we give some
examples of $L(x)$ satisfying \reff{eq:L(x)C(x)=Im}.

\begin{exm}
Consider the hypercube with quadratic constraints
\[
1-x_1^2 \geq 0, 1-x_2^2 \geq 0, \ldots, 1-x_n^2 \geq 0.
\]
%The tuple $c = ( 1-x_1^2, \ldots, 1-x_n^2 )$.
The equation $L(x)C(x)=I_n$ is satisfied by
\[
L(x) = \bbm -\frac{1}{2} \diag(x)  &  I_n \ebm.
\]
%%%%%%%%%%%%%%%%%%%%%%%%%%
\iffalse

clear all
syms x_1 x_2 x_3 x_4;
x = [x_1; x_2; x_3; x_4];
ccxx = [1-x(1)^2, 1-x(2)^2,1-x(3)^2, 1-x(4)^2];
matLx = left_inv_KKTx(ccxx, x, 2, 1),

\fi
%%%%%%%%%%%%%%%%%%%%%%%%%%%%%%%%
\end{exm}

\begin{exm}  \label{exm:nng:sph}
Consider the nonnegative portion of the unit sphere
\[
x_1 \geq 0, x_2 \geq 0, \ldots, x_n\geq 0,  x_1^2+\cdots+x_n^2 - 1 =0.
\]
%
%The tuple
%$c = ( x_1, \ldots, x_n,  x_1^2+\cdots+x_n^2 - 1 )$.
%
The equation $L(x)C(x)=I_{n+1}$ is satisfied by
\[
L(x) =
\bbm
I_n - xx^T &    x \mbf{1}_n^T    &   2x \\
\half x^T  & -\half \mbf{1}_n^T  &   -1
\ebm.
\]
%%%%%%%%%%%%%%%%%%%%%%%%%%
\iffalse

clear all
syms x_1 x_2 x_3 x_4;
x = [x_1; x_2; x_3; x_4];
ccxx = [x_1, x_2, x_3, x_4, sum(x.^2)-1];
matLx = left_inv_KKTx(ccxx, x, 2, 2),

\fi
%%%%%%%%%%%%%%%%%%%%%%%%%%%%%%%%
\end{exm}

\begin{exm}
Consider the set
\[
1-x_1^3-x_2^4\geq 0,  1- x_3^4 - x_4^3 \geq 0.
\]
%The tuple $c = ( 1-x_1^3-x_2^4,  1- x_3^4 - x_4^3 )$.
The equation $L(x)C(x)=I_2$ is satisfied by
\[
L(x) =
\left[\begin{array}{cccccc} -\frac{x_{1}}{3} & -\frac{x_{2}}{4} & 0 & 0 & 1 & 0\\ 0 & 0 & -\frac{x_{3}}{4} & -\frac{x_{4}}{3} & 0 & 1 \end{array}\right].
\]
%%%%%%%%%%%%%%%%%%%%%%%%%%
\iffalse

clear all
syms x_1 x_2 x_3 x_4;
x = [x_1; x_2; x_3; x_4];
ccxx = [1-x(1)^3-x(2)^4, 1-x(3)^4-x(4)^3];
matLx = left_inv_KKTx(ccxx, x, 4, 3),

\fi
%%%%%%%%%%%%%%%%%%%%%%%%%%%%%%%%
\end{exm}

\begin{exm}
Consider the quadratic set
\[
1-x_1x_2 -x_2x_3 - x_1x_3 \geq 0, \,  1- x_1^2 - x_2^2 - x_3^2 \geq 0.
\]
%The tuple $c = (1-x_1x_2 -x_2x_3 - x_1x_3, 1- x_1^2 - x_2^2 - x_3^2)$.
The matrix $L(x)^T$ satisfying $L(x)C(x)=I_2$ is
{\smaller \smaller \smaller
\[
\left[\begin{array}{rr} 25\, {x_{1}}^3 + 10\, {x_{1}}^2\, x_{2} + 40\, x_{1}\, {x_{2}}^2 - 25\, x_{1} - 2\, x_{3} &  - 25\, {x_{1}}^3 - 10\, {x_{1}}^2\, x_{2} - 40\, x_{1}\, {x_{2}}^2 + \frac{49\, x_{1}}{2} + 2\, x_{3}\\  - 15\, {x_{1}}^2\, x_{2} + 10\, x_{1}\, {x_{2}}^2 + 20 \,x_{3}\, x_{1}\, x_{2} - 10\, x_{1} & 15\, {x_{1}}^2\, x_{2} - 10\, x_{1}\, {x_{2}}^2 - 20\, x_{3}\, x_{1}\, x_{2} + 10\, x_{1} - \frac{x_{2}}{2}\\ 25\, x_{3}\, {x_{1}}^2 - 20\, x_{1}\, {x_{2}}^2 + 10\, x_{3}\, x_{1}\, x_{2} + 2\, x_{1} &  - 25\, x_{3}\, {x_{1}}^2 + 20\, x_{1}\, {x_{2}}^2 - 10\, x_{3}\, x_{1}\, x_{2} - 2\, x_{1} - \frac{x_{3}}{2}\\ 1 - 20\, x_{1}\, x_{3} - 10\, {x_{1}}^2 - 20\, x_{1}\, x_{2} & 20\, x_{1}\, x_{2} + 20\, x_{1}\, x_{3} + 10\, {x_{1}}^2\\  - 50\, {x_{1}}^2 - 20\, x_{2}\, x_{1} & 50\, {x_{1}}^2 + 20\, x_{2}\, x_{1} + 1 \end{array}\right].
\]
}
%%%%%%%%%%%%%%%%%%%%%%%%%%
\iffalse

clear all
syms x_1 x_2 x_3;
x = [x_1; x_2; x_3;]; n = length(x);
ccxx = [1-x(1)*x(2)-x(2)*x(3)-x(2)*x(3), 1-sum(x.^2)];
matLx = left_inv_KKTx(ccxx, x, 2, 3),

\fi
%%%%%%%%%%%%%%%%%%%%%%%%%%%%%%%%
\end{exm}

We would like to remark that a polynomial tuple $c=(c_1, \ldots, c_m)$
is generically nonsingular and Assumption~\ref{ass:lmd:p}
holds generically.

\begin{prop} \label{pro:c-ns-gen}
For all positive degrees $d_1, \ldots, d_m$,
there exists an open dense subset $\mc{U}$ of
$\mc{D} := \re[x]_{d_1} \times \cdots \times \re[x]_{d_m}$
such that every tuple $c=(c_1, \ldots, c_m) \in \mc{U}$
is nonsingular. Indeed, such $\mc{U}$ can be chosen
as a Zariski open subset of $\mc{D}$, i.e.,
it is the complement of a proper real variety of $\mc{D}$.
Moreover, Assumption~\ref{ass:lmd:p} holds for all $c \in \mc{U}$,
i.e., it holds generically.
\end{prop}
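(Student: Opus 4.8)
The plan is to exhibit the desired $\mc{U}$ as the complement of a proper algebraic subset of the coefficient space, and then to read off Assumption~\ref{ass:lmd:p} from Proposition~\ref{pro:L(x)W(x)=Idt}. Recall from the remark following Definition~\ref{def:nonsig} that $c=(c_1,\dots,c_m)$ is nonsingular if and only if, at every $u\in\cpx^n$, the gradients $\nabla c_i(u)$ with $i\in J(u)$ (see \reff{df:J(u)}) are linearly independent; in particular this forces $|J(u)|\le n$. Equivalently, $c$ \emph{fails} to be nonsingular precisely when there are $u\in\cpx^n$ and an index set $S\subseteq[m]$ which is ``bad'' for $u$ in one of two senses: (a) $|S|=n+1$ and $c_i(u)=0$ for all $i\in S$ (take $S\subseteq J(u)$ of size $n+1$ when $|J(u)|>n$); or (b) $|S|=k\le n$, $c_i(u)=0$ for all $i\in S$, and $\rank\,[\nabla c_i(u)]_{i\in S}<k$ (take $S=J(u)$ when the active gradients are dependent and $|J(u)|\le n$). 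The converse implications are clear, since a superset of a linearly dependent set of gradients is dependent.

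I would then work in the complexification $\mc{D}_{\cpx}:=\cpx[x]_{d_1}\times\cdots\times\cpx[x]_{d_m}$, of complex dimension $N$, and for each admissible $S$ form the incidence variety $Z_S\subseteq\mc{D}_{\cpx}\times\cpx^n$ of pairs $(c,u)$ satisfying the conditions in (a) or (b). Each $Z_S$ is Zariski closed and defined over $\re$: $c_i(u)$ is bilinear in the coefficients of $c_i$ and in $u$, and ``$\rank<k$'' is cut out by the vanishing of $k\times k$ minors. The key estimate is that every fiber of the projection $Z_S\to\cpx^n$ has codimension at least $n+1$ in $\mc{D}_{\cpx}$. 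Indeed, fix $u$. Because $d_i\ge1$, the condition $c_i(u)=0$ is one nonzero linear constraint on the block $\cpx[x]_{d_i}$, and on the resulting hyperplane the linear map $c_i\mapsto\nabla c_i(u)$ is onto $\cpx^n$ (witnessed by $c_i(x)=a^{T}(x-u)$, of degree $1\le d_i$). In case (a) the fiber therefore has codimension exactly $n+1$; in case (b) the product map $\prod_{i\in S}\{c_i(u)=0\}\to(\cpx^n)^{k}$ is a surjective linear map, so pulling back the rank-deficiency locus, of codimension $n-k+1$ in $(\cpx^n)^{k}$, gives the fiber total codimension $k+(n-k+1)=n+1$. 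By the theorem on dimension of fibers (applied to the irreducible components of $Z_S$), $\dim Z_S\le n+\big(N-(n+1)\big)=N-1$, so the Zariski closure $\mc{Z}_S$ of the image of $Z_S$ in $\mc{D}_{\cpx}$ is a proper subvariety defined over $\re$; hence so is the finite union $\mc{Z}:=\bigcup_S\mc{Z}_S$, and every $c\in\mc{D}_{\cpx}\setminus\mc{Z}$ is nonsingular.

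Finally, $\mc{Z}\cap\mc{D}$ is a real algebraic subset of the real vector space $\mc{D}=\re[x]_{d_1}\times\cdots\times\re[x]_{d_m}$; it is proper, because $\mc{D}$ is Zariski dense in $\mc{D}_{\cpx}$, so $\mc{D}\subseteq\mc{Z}$ would force $\mc{Z}=\mc{D}_{\cpx}$. Thus $\mc{U}:=\mc{D}\setminus\mc{Z}$ is the complement of a proper real variety, i.e.\ a nonempty Zariski open (hence Euclidean open and dense) subset of $\mc{D}$, and every $c\in\mc{U}$ is nonsingular. For such $c$, Proposition~\ref{pro:L(x)W(x)=Idt}(ii) yields $L(x)\in\re[x]^{m\times(m+n)}$ with $L(x)C(x)=I_m$ as in \reff{eq:L(x)C(x)=Im}; then by \reff{lmd=L1*gf(x)}, $\lmd=L_1(x)\nabla f(x)$ holds at every critical pair, so Assumption~\ref{ass:lmd:p} holds with $p_i$ the $i$-th entry of $L_1(x)\nabla f(x)$, giving the ``Moreover''.

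I expect the main obstacle to be the fiber-dimension count for the incidence varieties $Z_S$: making precise that imposing $c_i(u)=0$ really drops the dimension by one and that the gradient-evaluation map stays surjective on the resulting hyperplane (both using $d_i\ge1$), together with recalling that the locus of rank-deficient $k$-tuples of vectors in $\cpx^n$ has codimension $n-k+1$. The remaining points — closedness and $\re$-rationality of the $Z_S$, the passage from the generic complex statement to a genuine dense subset of the real coefficient space, and the deduction of Assumption~\ref{ass:lmd:p} — are routine once this estimate is in hand.
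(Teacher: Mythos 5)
Your proof is correct, but it follows a genuinely different route from the paper's. You characterize the failure of nonsingularity by bad index sets $S$ (either $n+1$ simultaneously active constraints, or $k\le n$ active constraints with rank-deficient gradients), build the corresponding incidence varieties $Z_S\subseteq \mc{D}_{\cpx}\times\cpx^n$, and kill them by a fiber-dimension count: each fiber over $u$ has codimension $n+1$ (your surjectivity of $c_i\mapsto\nabla c_i(u)$ on the hyperplane $\{c_i(u)=0\}$, using $d_i\ge 1$, and the codimension $n-k+1$ of the determinantal locus are both right), so the projection of $Z_S$ to $\mc{D}_{\cpx}$ lands in a proper subvariety defined over $\re$; density of $\mc{D}$ in $\mc{D}_{\cpx}$ then gives a proper real variety whose complement is the desired $\mc{U}$. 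The paper instead produces explicit polynomials in the coefficients: for every $(n+1)$-subset it takes the multivariate resultant $Res(c_{i_1},\dots,c_{i_{n+1}})$ (nonvanishing forbids $n+1$ constraints from having a common complex zero), and for every subset of size $k\le n$ it takes the discriminant $\Delta(c_{j_1},\dots,c_{j_k})$ (nonvanishing forbids singular common zeros), citing \cite{Nie-dis}; the set $\mc{U}$ is then the nonvanishing locus of the product $F=F_1F_2$. The paper's approach buys an explicit defining hypersurface for the complement of $\mc{U}$ (in principle computable, and it immediately certifies properness because resultants and discriminants are known to be not identically zero), at the cost of invoking the machinery of resultants and discriminants; your approach is self-contained elementary dimension counting, at the cost of being nonconstructive and requiring the small extra step (which you supply) of checking that each incidence variety really has small enough dimension. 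The final deduction of Assumption~\ref{ass:lmd:p} from Proposition~\ref{pro:L(x)W(x)=Idt}(ii) and \reff{lmd=L1*gf(x)} is identical in both arguments.
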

\begin{proof}
The proof needs to use resultants and discriminants,
which we refer to \cite{Nie-dis}.

First, let $J_1$ be the set of all
$(i_1, \ldots, i_{n+1})$ with
$1 \leq i_1 < \cdots < i_{n+1} \leq m$.
The resultant $Res(c_{i_1}, \ldots, c_{i_{n+1}})$
\cite[Section~2]{Nie-dis} is a polynomial
in the coefficients of $c_{i_1}, \ldots, c_{i_{n+1}}$
such that if $Res(c_{i_1}, \ldots, c_{i_{n+1}}) \ne 0$
then the equations
\[
c_{i_1}(x) =\cdots = c_{i_{n+1}}(x) = 0
\]
have no complex solutions. Define
\[
F_1 (c) \, := \, \prod_{ (i_1, \ldots, i_{n+1}) \in J_1  }
Res(c_{i_1}, \ldots, c_{i_{n+1}}).
\]
For the case that $m\leq n$, $J_1 = \emptyset$
and we just simply let $F_1(c) = 1$.
Clearly, if $F_1(c) \ne 0$, then no more than $n$ polynomials
of $c_1, \ldots, c_m$ have a common complex zero.

Second, let $J_2$ be the set of all
$(j_1, \ldots, j_k)$ with $k \leq n$ and
$1 \leq j_1 < \cdots < j_k \leq m$.
When one of $c_{j_1}, \ldots, c_{j_k}$ has degree bigger than one,
the discriminant $\Delta(c_{j_1}, \ldots, c_{j_k})$ is a polynomial
in the coefficients of $c_{j_1}, \ldots, c_{j_k}$
such that if $\Delta(c_{j_1}, \ldots, c_{j_k}) \ne 0$
then the equations
\[
c_{j_1}(x) =\cdots = c_{j_k}(x) = 0
\]
have no singular complex solution \cite[Section~3]{Nie-dis}, i.e.,
at every complex common solution $u$,
the gradients of $c_{j_1}, \ldots, c_{j_k}$
at $u$ are linearly independent.
When all $c_{j_1}, \ldots, c_{j_k}$ have degree one,
the discriminant of the tuple $(c_{j_1}, \ldots, c_{j_k})$
is not a single polynomial,
but we can define $\Delta(c_{j_1}, \ldots, c_{j_k})$
to be the product of all maximum minors of
its Jacobian (a constant matrix). Define
\[
F_2 (c) \, := \, \prod_{ (j_1, \ldots, j_k) \in J_2  }
\Delta(c_{j_1}, \ldots, c_{j_k}).
\]
Clearly, if $F_2(c) \ne 0$, then
then no $n$ or less polynomials
of $c_1, \ldots, c_m$ have a singular complex comon zero.

Last, let $F(c) := F_1(c) F_2(c)$ and
\[
\mc{U} \, := \, \{ c=(c_1, \ldots, c_m) \in \mc{D}: \,
F(c) \ne 0 \}.
\]
Note that $\mc{U}$ is a Zariski open subset of $\mc{D}$
and it is open dense in $\mc{D}$. For all $c\in \mc{D}$,
no more than $n$ of $c_1, \ldots, c_m$ can have a complex common zero.
For any $k$ polynomials ($k \leq n$) of $c_1, \ldots, c_m$,
if they have a complex common zero, say, $u$,
then their gradients at $u$ must be linearly independent.
This means that $c$ is a nonsingular tuple.

Since every $c \in \mc{U}$ is nonsingular, Proposition~\ref{pro:L(x)W(x)=Idt}
implies \reff{eq:L(x)C(x)=Im}, whence Assumption~\ref{ass:lmd:p} is satisifed.
Therefore, Assumption~\ref{ass:lmd:p} holds
for all $c \in \mc{U}$. So, it holds generically.
\end{proof}

\section{Numerical examples}
\label{sc:exm}

This section gives examples of using the new relaxations
\reff{momrlx:k}-\reff{rlxsos:k}
for solving the optimization problem~\reff{pop:c(x)>=0},
with usage of Lagrange multiplier expressions.
Some polynomials in the examples are from \cite{Rez00}.
The computation is implemented in MATLAB R2012a,
on a Lenovo Laptop with CPU@2.90GHz and RAM 16.0G.
The relaxations \reff{momrlx:k}-\reff{rlxsos:k} are solved by the software
{\tt GloptiPoly~3} \cite{GloPol3},
which calls the SDP package {\tt SeDuMi} \cite{Sturm}.
For neatness, only four decimal digits are displayed
for computational results.

The polynomials $p_i$ in Assumption~\ref{ass:lmd:p}
are constructed as follows. Order the constraining polynomials
as $c_1, \ldots, c_m$. First, find a matrix polynomial
$L(x)$ satisfying \reff{polyH:L(x)A(x)=Id} or \reff{eq:L(x)C(x)=Im}.
Let $L_1(x)$ be the submatrix of $L(x)$, consisting of the first $n$ columns.
Then, choose $(p_1, \ldots, p_m)$ to be the product
$L_1(x) \nabla f(x)$, i.e.,
\[
p_i = \Big( L_1(x) \nabla f(x) \Big)_i.
\]
In all our examples, the global minimum value
$f_{\min}$ of \reff{pop:c(x)>=0} is achieved at a critical point.
This is the case if the feasible set is compact,
or if $f$ is coercive (i.e.,the sublevel set
$\{f(x) \leq \ell \}$ is compact for all $\ell$),
and the constraint qualification condition holds.

By Theorem~\ref{thm:pf:tight}, we have
$f_k = f_{\min}$ for all $k$ big enough,
if $f_c = f_{\min}$ and
anyone of its conditions i)-iii) holds.
Typically, it might be inconvenient to check these conditions.
However, in computation, we do not need to check them at all.
Indeed, the condition~\reff{ft:Mty} is more convenient for usage.
When there are finitely many global minimizers,
Theorem~\ref{thm:ft} proved that \reff{ft:Mty}
is an appropriate criteria for detecting convergence.
It is satisfied for all our examples,
except Examples~\ref{exmp:01}, \ref{exmp:07} and \ref{exmp:09}
(they have infinitely many minimizers).

We compare the new relaxations~\reff{momrlx:k}-\reff{rlxsos:k}
with standard Lasserre relaxations in \cite{Las01}.
The lower bounds given by relaxations in \cite{Las01}
(without using Lagrange multiplier expressions)
and the lower bounds given by \reff{momrlx:k}-\reff{rlxsos:k}
(using Lagrange multiplier expressions)
are shown in the tables.
The computational time (in seconds) is also compared.
The results for standard Lasserre relaxations
are titled ``w./o. L.M.E.",
and those for the new relaxations~\reff{momrlx:k}-\reff{rlxsos:k}
are titled ``with L.M.E.".
%
%The new relaxations \reff{momrlx:k}-\reff{rlxsos:k}
%not only give tighter lower bounds but also
%consume less time (except a few cases).
%

\begin{exm} \label{exmp:01}
Consider the optimization problem
\[
\left\{\baray{rl}
\min & x_1x_2(10 - x_3 )\\
s.t. &  x_1\geq 0, x_2 \geq 0, x_3 \geq 0, 1-x_1-x_2-x_3 \geq 0.
\earay\right.
\]
The matrix polynomial $L(x)$ is given in Example~\ref{exm:simplex}.
Since the feasible set is compact, the minimum $f_{\min} =0$
is achieved at a critical point.
The condition~ii) of Theorem~\ref{thm:pf:tight}
is satisfied.\footnote{Note that
$1-x^Tx =(1-e^Tx)(1+x^Tx)+\sum_{i=1}^n x_i(1-x_i)^2+\sum_{i\neq j}x_i^2x_j
\in \mbox{IQ}(c_{eq},c_{in}).$
}
Each feasible point $(x_1,x_2,x_3)$ with $x_1x_2=0$ is a global minimizer.
The computational results for standard Lasserre's relaxations
and the new ones \reff{momrlx:k}-\reff{rlxsos:k} are in Table~\ref{tab:emp:01}.
It confirms that $f_k = f_{\min}$ for all $k\geq 3$,
up to numerical round-off errors.
\begin{table}[htb]
\centering
\caption{Computational results for Example~\ref{exmp:01}.}
\label{tab:emp:01}
\begin{tabular}{|c|c|c|c|c|} \hline
\multirow{2}{*}{order $k$} &  \multicolumn{2}{c|}{w./o. L.M.E.}
         &  \multicolumn{2}{c|}{with L.M.E.}   \\ \cline{2-5}
  &  lower bound &  time  &  lower bound &  time   \\ \hline
2 &  $-0.0521$   &  $0.6841$   & $-0.0521$           &  $0.1922$   \\   \hline
3 &  $-0.0026$   &  $0.2657$   & $-3 \cdot 10^{-8}$  &  $0.2285$    \\   \hline
4 &  $-0.0007$   &  $0.6785$   & $-6 \cdot 10^{-9}$  &  $0.4431$    \\   \hline
5 &  $-0.0004$   &  $1.6105$   & $-2 \cdot 10^{-9}$  &  $0.9567$    \\   \hline
\end{tabular}
\end{table}

%%%%%%%%%%%%%%%%%%%%%%%%%%%%
\iffalse

clear all
syms x_1 x_2 x_3
x = [x_1 x_2 x_3].';
ccxx = [x(1), x(2), x(3), 1-x(1)-x(2)-x(3)];
matLx = left_inv_KKTx(ccxx, x, 1, 1),
latex(matLx),

clear all,
mpol x 3;
f = x(1)*x(2)*(10-x(3));
K = [x' >=0, 1-sum(x) >=0];
objbd =[];
tic; las_time = [];
for ord = 2 : 5
P = msdp(min(f),K, ord);
used_time = tic;
[sta, obj ] = msol(P);
las_time = [las_time, toc(used_time)];
objbd = [objbd, obj];
end;

x1 = x(1); x2 = x(2); x3 = x(3);
cx = [x(1), x(2), x(3), 1-sum(x)];
lmd=[...
1 - x1,    -x2,    -x3; ...
   -x1, 1 - x2,    -x3; ...
   -x1,    -x2, 1 - x3; ...
   -x1,    -x2,    -x3; ...
]*diff(f,x)';
KT = [K, lmd'>=0,  x(1)*lmd(1)==0, x(2)*lmd(2)==0, x(3)*lmd(3)==0, (1-sum(x))*lmd(4)==0];
KT = [KT, diff(f,x)-lmd(1)*diff(cx(1),x)-lmd(2)*diff(cx(2),x)-lmd(3)*diff(cx(3),x)-lmd(4)*diff(cx(4),x)==0];
P = msdp(min(f),KT, 2);
tic;  start_time = tic;
[sta, objkt ] = msol(P);
lme_time = toc(start_time);
for ord = 3 : 5
P = msdp(min(f),KT, ord);
start_time = tic;
[sta, obj ] = msol(P);
lme_time = [lme_time, toc(start_time)];
objkt = [objkt, obj];
end;

[objbd; objkt],

[las_time; lme_time],

\fi
%%%%%%%%%%%%%%%%%%%%%%%%%%%%
\end{exm}

\begin{exm} \label{exmp:02}
Consider the optimization problem
\[
\left\{\baray{rl}
\min & x_1^4x_2^2 + x_1^2x_2^4 + x_3^6 - 3 x_1^2x_2^2 x_3^2 + (x_1^4 + x_2^4 + x_3^4) \\
s.t. &  x_1^2 + x_2^2 +  x_3^2 \geq 1.
\earay\right.
\]
The matrix polynomial $L(x) = \bbm \half x_1 & \half x_2 & \half x_3  & -1 \ebm$.
The objective $f$ is the sum of the positive definite form $x_1^4+x_2^4+x_3^4$
and the Motzkin polynomial
\[
M(x) := x_1^4x_2^2 + x_1^2x_2^4 + x_3^6 - 3 x_1^2x_2^2 x_3^2.
\]
Note that $M(x)$ is nonnegative everywhere but not SOS \cite{Rez00}.
Clearly, $f$ is coercive and $f_{\min}$
is achieved at a critical point.
The set $\mbox{IQ}(\phi,\psi)$ is archimedean, because
\[
c_1(x)p_1(x) = (x_1^2 + x_2^2 +  x_3^2 -1)\big(3M(x) +
2(x_1^4 + x_2^4 + x_3^4)\big) = 0
\]
defines a compact set. So, the condition~i) of Theorem~\ref{thm:pf:tight}
is satisfied.\footnote{
This is because $-c_1^2p_1^2 \in \mbox{Ideal}(\phi) \subseteq \mbox{IQ}(\phi,\psi)$
and the set $\{-c_1(x)^2p_1(x)^2 \geq 0 \}$ is compact.
}
The minimum value $f_{\min}= \frac{1}{3}$,
and there are $8$ minimizers
$
(\pm \frac{1}{\sqrt{3}}, \, \pm \frac{1}{\sqrt{3}}, \, \pm \frac{1}{\sqrt{3}}).
$
The computational results for standard Lasserre's relaxations
and the new ones \reff{momrlx:k}-\reff{rlxsos:k} are in Table~\ref{tab:emp:02}.
It confirms that $f_k = f_{\min}$ for all $k\geq 4$,
up to numerical round-off errors.
\begin{table}[htb]
\centering
\caption{Computational results for Example~\ref{exmp:02}.}
\label{tab:emp:02}
\begin{tabular}{|c|c|c|c|c|} \hline
\multirow{2}{*}{order $k$} &  \multicolumn{2}{c|}{w./o. L.M.E.}
         &  \multicolumn{2}{c|}{with L.M.E.}   \\ \cline{2-5}
  &  lower bound &  time   &  lower bound &  time   \\ \hline
3 & $-\infty$    & $0.4466$  & $0.1111$   & $0.1169$   \\   \hline
4 & $-\infty$    & $0.4948$  & $0.3333$   & $0.3499$   \\   \hline
5 & $ -2.1821 \cdot 10^5$    & $1.1836$  & $0.3333$   & $0.6530$   \\  \hline
\end{tabular}
\end{table}

%%%%%%%%%%%%%%%%%%%%%%%%%%%%
\iffalse

clear all,
syms x_1 x_2 x_3;
x = [x_1 x_2 x_3].';
ccxx = [sum(x.^2)-1];
matLx = left_inv_KKTx(ccxx, x, 2, 1),

clear all,
mpol x 3;
f = x(1)^2*x(2)^2*(x(1)^2+x(2)^2) + x(3)^6 -3*x(1)^2*x(2)^2*x(3)^2 + sum(x.^4);
K = [x'*x-1 >=0];
objbd =[];
tic; las_time = [];
for ord = 3 : 6
P = msdp(min(f),K, ord);
used_time = tic;
[sta, obj ] = msol(P);
las_time = [las_time, toc(used_time)];
objbd = [objbd, obj];
end;
objbd,

gradf = diff(f,x)*x; cx = [x'*x-1];
lmd = gradf/2;
KT = [K, lmd>=0];
P = msdp(min(f),KT, 3);
tic; start_time = tic;
[sta, objkt ] = msol(P);
lme_time = toc(start_time);
KT = [K, lmd>=0, cx(1)*lmd==0,diff(f,x)-lmd*diff(cx,x)==0];
for ord = 4 : 6
P = msdp(min(f),KT, ord);
start_time = tic;
[sta, obj ] = msol(P);
lme_time = [lme_time, toc(start_time)];
objkt = [objkt, obj];
end;
objkt,

[objbd; objkt],
[las_time; lme_time],

\fi
%%%%%%%%%%%%%%%%%%%%%%%%%%%%
\end{exm}

\begin{exm}\label{exmp:03}
Consider the optimization problem:
\[
\left\{\baray{rl}
\min &  x_1x_2 + x_2x_3 + x_3x_4 - 3 x_1x_2 x_3x_4 +( x_1^3+\cdots + x_4^3)   \\
s.t. & x_1, x_2, x_3, x_4  \geq  0, 1-x_1-x_2 \geq 0, 1 - x_3 - x_4 \geq 0.
\earay\right.
\]
The matrix polynomial $L(x)$ is
\[
\left[\begin{array}{rrrrrrrrrr} 1 - x_{1} & - x_{2} & 0 & 0 & 1 & 1 & 0 & 0 & 1 & 0\\ - x_{1} & 1 - x_{2} & 0 & 0 & 1 & 1 & 0 & 0 & 1 & 0\\ 0 & 0 & 1 - x_{3} & - x_{4} & 0 & 0 & 1 & 1 & 0 & 1\\ 0 & 0 & - x_{3} & 1 - x_{4} & 0 & 0 & 1 & 1 & 0 & 1\\ - x_{1} & - x_{2} & 0 & 0 & 1 & 1 & 0 & 0 & 1 & 0\\ 0 & 0 & - x_{3} & - x_{4} & 0 & 0 & 1 & 1 & 0 & 1 \end{array}\right].
\]
The feasible set is compact, so
$f_{\min}$ is achieved at a critical point.
One can show that $f_{\min} =0$ and the minimizer is the origin.
The condition~ii) of Theorem~\ref{thm:pf:tight}
is satisfied, because $\mbox{IQ}(c_{eq},c_{in})$
is archimedean.\footnote{
This is because $1-x_1^2-x_2^2$ belongs to the quadratic module
of $(x_1, x_2,1-x_1-x_2)$ and $1-x_3^2-x_4^2$ belongs to the quadratic module
of $(x_3, x_4,1-x_3-x_4)$.
See the footnote in Example~\ref{exmp:01}.
}
The computational results for standard Lasserre's relaxations
and the new ones \reff{momrlx:k}-\reff{rlxsos:k} are in Table~\ref{tab:emp:03}.
\begin{table}[htb]
\centering
\caption{Computational results for Example~\ref{exmp:03}.}
\label{tab:emp:03}
\begin{tabular}{|c|c|r|c|r|} \hline
\multirow{2}{*}{order $k$} &  \multicolumn{2}{c|}{w./o. L.M.E.}
         &  \multicolumn{2}{c|}{with L.M.E.}   \\ \cline{2-5}
  &  lower bound &  time \,\,  &  lower bound &  time \,\,   \\ \hline
3 &  $-2.9 \cdot 10^{-5}$       &  $0.7335$   & $-6 \cdot 10^{-7}$   &  $0.6091$   \\   \hline
4 &  $-1.4 \cdot 10^{-5}$    &  $2.5055$   & $-8 \cdot 10^{-8}$   &  $2.7423$   \\   \hline
5 &   $-1.4 \cdot 10^{-5}$   & $12.7092$   & $-5 \cdot 10^{-8}$   & $13.7449$    \\   \hline
\end{tabular}
\label{tab:exmp:3}
\end{table}
%%%%%%%%%%%%%%%%%%%%%%%%%%%%
\iffalse

clear all,
syms x_1 x_2 x_3 x_4;
xx = [x_1 x_2 x_3 x_4].';
ccxx = [xx(1) xx(2)  xx(3)  xx(4) 1-xx(1)-xx(2)   1-xx(3)-xx(4)];
matLx = left_inv_KKTx(ccxx, xx, 1, 1),
latex(matLx),

clear all,
mpol x 4;
f = x(1)*x(2)+x(2)*x(3)+x(3)*x(4)-3*x(1)*x(2)*x(3)*x(4)+sum(x.^3);
K = [x(1)>=0, x(2)>=0, x(3)>=0, x(4)>=0, 1-x(1)-x(2)>=0, 1-x(3)-x(4)>=0];
objbd =[];
tic; las_time = [];
for ord = 3 : 5
P = msdp(min(f),K, ord);
used_time = tic;
[sta, obj ] = msol(P);
las_time = [las_time, toc(used_time)];
objbd = [objbd, obj];
end;
objbd,

gf = diff(f, x)';
lmd = [...
[ 1 - x(1),    -x(2),       0,       0]*gf; ...
[    -x(1), 1 - x(2),       0,       0]*gf; ...
[       0,       0, 1 - x(3),    -x(4)]*gf; ...
[       0,       0,    -x(3), 1 - x(4)]*gf; ...
[    -x(1),    -x(2),       0,       0]*gf; ...
[       0,       0,    -x(3),    -x(4)]*gf ];
cx = [x(1),  x(2), x(3), x(4), 1-x(1)-x(2), 1-x(3)-x(4)];
KT = [K, lmd'>=0, ( lmd'.*cx ) ==0];
gfkt = diff(f,x);
for k = 1 : 6
 gfkt = gfkt-lmd(k)*diff(cx(k),x);
end
KT = [K, lmd'>=0, ( lmd'.*cx ) ==0, gfkt ==0];
objkt =[];
lme_time = [];
for ord = 3 : 5
P = msdp(min(f),KT, ord);
start_time = tic;
[sta, obj ] = msol(P);
lme_time = [lme_time, toc(start_time)];
objkt = [objkt, obj];
end;
objkt,

[objbd; objkt],
[las_time ; lme_time],

\fi
%%%%%%%%%%%%%%%%%%%%%%%%%%%%
\end{exm}

\begin{exm}\label{exmp:04}
Consider the polynomial optimization problem
\[
\left\{ \baray{rl}
\underset{x\in\re^2}{\min} &  x_1^2+50x_2^2  \\
s.t. & x_1^2- \frac{1}{2} \geq 0,
x_2^2-2x_1x_2 - \frac{1}{8} \geq 0, x_2^2+2x_1x_2 - \frac{1}{8} \geq 0.
\earay \right.
\]
It is motivated from an example in \cite[\S3]{HLNZ}.
The first column of $L(x)$ is
\[
\left[\begin{array}{r} \frac{8\, {x_{1}}^3}{5} + \frac{x_{1}}{5}\\ \frac{288\, x_{2}\, {x_{1}}^4}{5} - \frac{16\, {x_{1}}^3}{5} - \frac{x_{2}\, {x_{1}}^2\, 124}{5} + \frac{8\, x_{1}}{5} - 2\, x_{2}\\  - \frac{288\, x_{2}\, {x_{1}}^4}{5} - \frac{16\, {x_{1}}^3}{5} + \frac{x_{2}\, {x_{1}}^2\, 124}{5} + \frac{8\, x_{1}}{5} + 2\, x_{2} \end{array}\right],
\]
and the second column of $L(x)$ is
\[
\left[\begin{array}{r}  - \frac{8\, {x_{1}}^2\, x_{2}}{5} + \frac{4\, {x_{2}}^3}{5} - \frac{x_{2}}{10}\\ \frac{288\, {x_{1}}^3\, {x_{2}}^2}{5} + \frac{16\, {x_{1}}^2\, x_{2}}{5} - \frac{142\, x_{1}\, {x_{2}}^2}{5} - \frac{9\, x_{1}}{20} - \frac{8\, {x_{2}}^3}{5} + \frac{11\, x_{2}}{5}\\  - \frac{288\, {x_{1}}^3\, {x_{2}}^2}{5} + \frac{16\, {x_{1}}^2\, x_{2}}{5} + \frac{142\, x_{1}\, {x_{2}}^2}{5} + \frac{9\, x_{1}}{20} - \frac{8\, {x_{2}}^3}{5} + \frac{11\, x_{2}}{5} \end{array}\right].
\]
%
%The matrix $L_1(x)$ for $L(x)C(x) = I_3$ is
%{\tiny
%\[
%\left(\begin{array}{rr} \frac{8\, {x_{1}}^3}{5} + \frac{x_{1}}{5} &  - \frac{8\, {x_{1}}^2\, x_{2}}{5} + \frac{4\, {x_{2}}^3}{5} - \frac{x_{2}}{10}\\ \frac{x_{2}\, {x_{1}}^4\, 288}{5} - \frac{16\, {x_{1}}^3}{5} - \frac{x_{2}\, {x_{1}}^2\, 124}{5} + \frac{8\, x_{1}}{5} - 2\, x_{2} & \frac{288\, {x_{1}}^3\, {x_{2}}^2}{5} + \frac{16\, {x_{1}}^2\, x_{2}}{5} - \frac{142\, x_{1}\, {x_{2}}^2}{5} - \frac{9\, x_{1}}{20} - \frac{8\, {x_{2}}^3}{5} + \frac{11\, x_{2}}{5}\\  - \frac{x_{2}\, {x_{1}}^4\, 288}{5} - \frac{16\, {x_{1}}^3}{5} + \frac{x_{2}\, {x_{1}}^2\, 124}{5} + \frac{8\, x_{1}}{5} + 2\, x_{2} &  - \frac{288\, {x_{1}}^3\, {x_{2}}^2}{5} + \frac{16\, {x_{1}}^2\, x_{2}}{5} + \frac{142\, x_{1}\, {x_{2}}^2}{5} + \frac{9\, x_{1}}{20} - \frac{8\, {x_{2}}^3}{5} + \frac{11\, x_{2}}{5} \end{array}\right).
%\]\noindent}
%
The objective is coercive, so $f_{\min}$
is achieved at a critical point. The minimum value
$f_{\min} = 56+3/4+25\sqrt{5}  \approx 112.6517$ and
the minimizers are $(\pm \sqrt{1/2}, \pm (\sqrt{5/8}+\sqrt{1/2}))$.
The computational results for standard Lasserre's relaxations
and the new ones \reff{momrlx:k}-\reff{rlxsos:k} are in Table~\ref{tab:emp:04}.
It confirms that $f_k = f_{\min}$ for all $k\geq 4$,
up to numerical round-off errors.
\begin{table}[htb]
\centering
\caption{Computational results for Example~\ref{exmp:04}.}
\label{tab:emp:04}
\begin{tabular}{|c|c|c|c|c|} \hline
\multirow{2}{*}{order $k$} &  \multicolumn{2}{c|}{w./o. L.M.E.}
         &  \multicolumn{2}{c|}{with L.M.E.}   \\ \cline{2-5}
  &  lower bound &  time  &  lower bound &  time   \\ \hline
3 &  $6.7535$     &  $0.4611$    &  $56.7500$     & $0.1309$    \\   \hline
4 &  $6.9294$     &  $0.2428$    &  $112.6517$    & $0.2405$    \\   \hline
5 &  $8.8519$     &  $0.3376$    &  $112.6517$    & $0.2167$    \\   \hline
6 &  $16.5971$    &  $0.4703$    &  $112.6517$    & $0.3788$    \\   \hline
7 &  $35.4756$    &  $0.6536$    &  $112.6517$    & $0.4537$    \\   \hline
\end{tabular}
\end{table}

\end{exm}

\begin{exm}\label{exmp:05}
Consider the optimization problem
\[
\left\{\baray{rl}
\min\limits_{x\in\re^3} &  x_1^3+x_2^3+x_3^3 + 4x_1x_2x_3
-\big( x_1(x_2^2+x_3^2)+x_2(x_3^2+x_1^2)+x_3(x_1^2+x_2^2) \big) \\
\mbox{s.t.} &  x_1 \geq 0, x_1x_2-1 \geq 0, \, x_2x_3 -1 \geq 0.
\earay\right.
\]
The matrix polynomial $L(x)$ is
\[
\left[
\begin{array}{rrrrrr} 1 - x_{1}\, x_{2} & 0 & 0 & x_{2} & x_{2} & 0\\ x_{1} & 0 & 0 & -1 & -1 & 0\\ - x_{1} & x_{2} & 0 & 1 & 0 & -1 \end{array}
\right].
\]
The objective is a variation of Robinson's form \cite{Rez00}.
It is a positive definite form over the nonnegative orthant $\re_+^3$,
so the minimum value is achieved at a critical point.
In computation, we got $f_{\min} \approx 0.9492$
and a global minimizer $( 0.9071, 1.1024, 0.9071)$.
The computational results for standard Lasserre's relaxations
and the new ones \reff{momrlx:k}-\reff{rlxsos:k} are in Table~\ref{tab:emp:05}.
It confirms that $f_k = f_{\min}$ for all $k\geq 3$,
up to numerical round-off errors.
\begin{table}[htb]
\centering
\caption{Computational results for Example~\ref{exmp:05}.}
\label{tab:emp:05}
\begin{tabular}{|c|c|c|c|c|} \hline
\multirow{2}{*}{order $k$} &  \multicolumn{2}{c|}{w./o. L.M.E.}
         &  \multicolumn{2}{c|}{with L.M.E.}   \\ \cline{2-5}
  &  lower bound &  time  &  lower bound &  time   \\ \hline
2 & $-\infty$              &  $0.4129$    &  $-\infty$   & $0.1900$    \\   \hline
3 & $-7.8184 \cdot 10^6$   &  $0.4641$    &  $0.9492$    & $0.3139$    \\   \hline
4 & $-2.0575 \cdot 10^4$   &  $0.6499$    &  $0.9492$    & $0.5057$    \\   \hline
\end{tabular}
\end{table}
%%%%%%%%%%%%%%%%%%%%%%%%%%%%%%%%%%%%%%%%%
%% the following is a Singular code
\iffalse

clear all,
syms x_1 x_2 x_3;
x = [x_1  x_2  x_3].';
ccxx = [x(1), x(1)*x(2)-1, x(2)*x(3)-1];
matLx = left_inv_KKTx(ccxx, x, 2, 2),
latex(matLx),

clear all,
mpol x 3;
f = sum(x.^3)+4*(prod(x)) -(x(1)*(x(2)^2+x(3)^2)+x(2)*(x(3)^2+x(1)^2)+x(3)*(x(1)^2+x(2)^2));
K = [x(1) >=0, x(1)*x(2)-1>=0,  x(2)*x(3)-1>=0];
objbd =[];
tic; las_time = [];
for ord = 2 : 6
P = msdp(min(f),K, ord);
used_time = tic;
[sta, obj ] = msol(P);
las_time = [las_time, toc(used_time)];
objbd = [objbd, obj];
end;
objbd,

gf = diff(f, x)'; x_1 = x(1); x_2 = x(2);
lmd = [ ...
 1 - x_1*x_2,   0, 0; ...
         x_1,   0, 0; ...
        -x_1, x_2, 0; ...
]*gf;
cx= [x(1), x(1)*x(2)-1,  x(2)*x(3)-1];
KT = [K, lmd'>=0];
P = msdp(min(f),KT, 2);
start_time = tic;
[sta, objkt ] = msol(P);
lme_time = toc(start_time);
KT = [K,  1e-2*lmd'>=0,  1e-2*(cx .* lmd' ) ==0];
KT = [KT, 1e-2*( diff(f,x)-lmd(1)*diff(cx(1),x)-lmd(2)*diff(cx(2),x)-lmd(3)*diff(cx(3),x) )==0];
for ord = 3 : 6
P = msdp(min(f),KT, ord);
start_time = tic;
[sta, obj ] = msol(P);
lme_time = [ lme_time, toc(start_time) ];
objkt = [objkt, obj];
end;
objkt,

[objbd; objkt],
[las_time; lme_time],

\fi
%%%%%%%%%%%%%%%%%%%%%%%%%%%%%%%%%%%%%%%
\end{exm}

\begin{exm}\label{exmp:06}
Consider the optimization problem ($x_0 := 1$)
\[
\left\{\baray{rl}
\min\limits_{x\in\re^4} &
x^Tx + \sum_{i=0}^4\prod\limits_{j\ne i} (x_i-x_j) \\
\mbox{s.t.} &  x_1^2-1 \geq 0,\, x_2^2 -1 \geq 0,\, x_3^2 - 1 \geq 0,\,  x_4^2-1 \geq 0.
\earay\right.
\]
The matrix polynomial $L(x) = \bbm \half \diag(x) & -I_4 \ebm$.
%
%\[
%\left[\begin{array}{cccccccc} \frac{x_{1}}{2} & 0 & 0 & 0 & -1 & 0 & 0 & 0\\ 0 & \frac{x_{2}}{2} & 0 & 0 & 0 & -1 & 0 & 0\\ 0 & 0 & \frac{x_{3}}{2} & 0 & 0 & 0 & -1 & 0\\ 0 & 0 & 0 & \frac{x_{4}}{2} & 0 & 0 & 0 & -1 \end{array}\right].
%\]
%
The first part of the objective is $x^Tx$,
while the second part is a nonnegative polynomial \cite{Rez00}.
The objective is coercive, so $f_{\min}$ is achieved at a critical point.
In computation, we got $f_{\min} = 4.0000$ and $11$ global minimizers:
\[
(1, 1, 1, 1), \quad
(1, -1, -1, 1), \quad
(1, -1, 1, -1), \quad
(1, 1, -1, -1),
\]
\[
(1, -1, -1, -1), \quad
(-1, -1, 1, 1), \quad
(-1, 1, -1, 1), \quad
(-1, 1, 1, -1),
\]
\[
(-1, -1, -1, 1), \quad
(-1, -1, 1, -1), \quad
(-1, 1, -1, -1).
\]
The computational results for standard Lasserre's relaxations
and the new ones \reff{momrlx:k}-\reff{rlxsos:k} are in Table~\ref{tab:emp:06}.
It confirms that $f_k = f_{\min}$ for all $k\geq 4$,
up to numerical round-off errors.
\begin{table}[htb]
\centering
\caption{Computational results for Example~\ref{exmp:06}.}
\label{tab:emp:06}
\begin{tabular}{|c|c|r|c|r|} \hline
\multirow{2}{*}{order $k$} &  \multicolumn{2}{c|}{w./o. L.M.E.}
         &  \multicolumn{2}{c|}{with L.M.E.}   \\ \cline{2-5}
  &  lower bound &  time \,\, &  lower bound &  time \,\,  \\ \hline
3 &  $-\infty$      & $1.1377$    & $3.5480$   & $1.1765$    \\   \hline
4 &  $-6.6913 \cdot 10^4$  & $4.7677$    & $4.0000$   & $3.0761$    \\   \hline
5 &  $-21.3778$     & $22.9970$   & $4.0000$   & $10.3354$    \\   \hline
\end{tabular}
\end{table}

\end{exm}

\begin{exm} \label{exmp:07}
Consider the optimization problem
\[
\left\{\baray{rl}
\min\limits_{x\in\re^3} & x_1^4x_2^2 + x_2^4x_3^2 + x_3^4x_1^2 - 3 x_1^2x_2^2 x_3^2 + x_2^2 \\
\mbox{s.t.} &  x_1-x_2x_3 \geq 0, -x_2+x_3^2 \geq 0.
\earay\right.
\]
The matrix polynomial
$
L(x) = \left[\begin{array}{rrrrr} 1 & 0 & 0 & 0 & 0\\ - x_{3} & -1 & 0 & 0 & 0 \end{array}\right].
$
By the arithmetic-geometric mean inequality,
one can show that $f_{\min} =  0 $.
The global minimizers are $(x_1,0,x_3)$ with $x_1 \geq 0$ and $x_1x_3 =0$.
The computational results for standard Lasserre's relaxations
and the new ones \reff{momrlx:k}-\reff{rlxsos:k} are in Table~\ref{tab:emp:07}.
It confirms that $f_k = f_{\min}$ for all $k\geq 5$,
up to numerical round-off errors.
\begin{table}[htb]
\centering
\caption{Computational results for Example~\ref{exmp:07}.}
\label{tab:emp:07}
\begin{tabular}{|c|c|c|c|c|} \hline
\multirow{2}{*}{order $k$} &  \multicolumn{2}{c|}{w./o. L.M.E.}
         &  \multicolumn{2}{c|}{with L.M.E.}   \\ \cline{2-5}
  &  lower bound &  time  &  lower bound &  time   \\ \hline
3 &  $-\infty$    & $0.6144$     & $-\infty$     & $0.3418$    \\   \hline
4 &  $-1.0909 \cdot 10^7$      & $1.0542$     & $-3.9476$     & $0.7180$    \\   \hline
5 &  $-942.6772$  & $1.6771$     & $-3 \cdot 10^{-9}$     & $1.4607$    \\   \hline
6 &  $-0.0110$    & $3.3532$     & $-8 \cdot 10^{-10}$    & $3.1618$    \\   \hline
\end{tabular}
\end{table}

%%%%%%%%%%%%%%%%%%%%%%%%%%%%%%%%%%%%%%%%%
%% the following is a Singular code
\iffalse

clear all,
syms x_1 x_2 x_3;
x = [x_1 x_2 x_3].';
ccxx = [x(1)-x(2)*x(3), -x(2)+x(3)^2];
matLx = left_inv_KKTx(ccxx, x, 2, 1),
latex(matLx),

clear all
mpol x 3;
f =  x(1)^4*x(2)^2 + x(2)^4*x(3)^2 + x(3)^4*x(1)^2 -3*x(1)^2*x(2)^2*x(3)^2 +x(2)^2;
K = [x(1)-x(2)*x(3)>=0, -x(2)+x(3)^2>=0];
objbd =[];
tic; las_time = [];
for ord = 3 : 6
P = msdp(min(f),K, ord);
used_time = tic;
[sta, obj ] = msol(P);
las_time = [las_time, toc(used_time)];
objbd = [objbd, obj];
end;
objbd,

gf = diff(f, x)';
lmd = [1 0 0; -x(3) -1 0]*gf;
cx = [x(1)-x(2)*x(3), -x(2)+x(3)^2];
KT = [K, lmd'>=0];
P = msdp(min(f),KT, 3);
start_time = tic;
[sta, objkt ] = msol(P);
lme_time = toc(start_time);
gfkt = diff(f,x)-lmd(1)*diff(cx(1),x)-lmd(2)*diff(cx(2),x);
KT = [K,  lmd'>=0,   (cx.*lmd')==0, gfkt ==0];
for ord = 4 : 6
P = msdp(min(f),KT, ord);
start_time = tic;
[sta, obj ] = msol(P);
lme_time = [lme_time, toc(start_time)];
objkt = [objkt, obj];
end;
objkt,

[objbd; objkt],
[las_time; lme_time],

\fi
%%%%%%%%%%%%%%%%%%%%%%%%%%%%%%%%%%%%%%%
\end{exm}

\begin{exm}\label{exmp:08}
Consider the optimization problem
\[
\left\{\baray{rl}
\min\limits_{x\in\re^4} &
 x_1^2(x_1-x_4)^2 + x_2^2(x_2-x_4)^2 + x_3^2(x_3-x_4)^2 + \\
  &  \qquad 2x_1x_2x_3(x_1+x_2+x_3-2x_4) + (x_1-1)^2   + (x_2-1)^2  + (x_3-1)^2   \\
\mbox{s.t.} &  x_1 - x_2 \geq 0, \,\, x_2 - x_3 \geq 0.
\earay \right.
\]
The matrix polynomial
$
L(x) = \left[\begin{array}{rrrrrr} 1 & 0 & 0 & 0 & 0 & 0\\
1 & 1 & 0 & 0 & 0 & 0 \end{array}\right].
$
In the objective, the sum of the first $4$ terms
is a nonnegative form \cite{Rez00},
while the sum of the last $3$ terms is a coercive polynomial.
The objective is coercive, so $f_{\min}$ is achieved at a critical point.
In computation, we got $f_{\min} \approx 0.9413 $
and a minimizer
\[ (0.5632, \,   0.5632, \,  0.5632, \,   0.7510).  \]
The computational results for standard Lasserre's relaxations
and the new ones \reff{momrlx:k}-\reff{rlxsos:k} are in Table~\ref{tab:emp:08}.
It confirms that $f_k = f_{\min}$ for all $k\geq 3$,
up to numerical round-off errors.
\begin{table}[htb]
\centering
\caption{Computational results for Example~\ref{exmp:08}.}
\label{tab:emp:08}
\begin{tabular}{|c|c|c|c|c|} \hline
\multirow{2}{*}{order $k$} &  \multicolumn{2}{c|}{w./o. L.M.E.}
         &  \multicolumn{2}{c|}{with L.M.E.}   \\ \cline{2-5}
  &  lower bound &  time  &  lower bound &  time   \\ \hline
2 &  $-\infty$       &  $0.3984$    & $-0.3360$ \,\,   & $0.9321$    \\   \hline
3 &  $-\infty$       &  $0.7634$    & $0.9413$     & $0.5240$    \\   \hline
4 &  $-6.4896 \cdot 10^5$  &  $4.5496$    & $0.9413$     & $1.7192$    \\   \hline
5 &  $-3.1645 \cdot 10^3$    & $24.3665$    & $0.9413$     & $8.1228$    \\   \hline
\end{tabular}
\end{table}

%%%%%%%%%%%%%%%%%%%%%%%%%%%%%%%%%%%%%%%%%
\iffalse

clear all,
syms x_1 x_2 x_3  x_4;
x = [x_1 x_2 x_3  x_4].';
ccxx = [x(1)-x(2), x(2)-x(3)];
matLx = left_inv_KKTx(ccxx, x, 1, 1),
latex(matLx),

clear all
mpol x 4;
f =   x(1)^2*(x(1)-x(4))^2 + x(2)^2*(x(2)-x(4))^2 + ...
    x(3)^2*(x(3)-x(4))^2 +2*x(1)*x(2)*x(3)*( x(1)+x(2)+x(3)-x(4) ) + ...
    (x(1)-1)^2 + (x(2)-1)^2 + (x(3)-1)^2;
K = [ x(1)>=x(2), x(2)>=x(3)];
objbd =[];
tic; las_time = [];
for ord = 2 : 5
P = msdp(min(f),K, ord);
used_time = tic;
[sta, obj ] = msol(P);
las_time = [las_time, toc(used_time)];
objbd = [objbd, obj];
end;
objbd,

gf = diff(f, x)';
lmd = [1 0 0 0; 1 1 0 0]*gf;
cx = [x(1)-x(2), x(2)-x(3)];
gfkt = diff(f,x)-lmd(1)*diff(cx(1),x)-lmd(2)*diff(cx(2),x);
KT =[K, lmd'>=0, lmd'.*cx ==0, gfkt ==0];
objkt =[];
tic; lme_time = [];
for ord = 2 : 5
P = msdp(min(f),KT, ord);
start_time = tic;
[sta, obj ] = msol(P);
lme_time = [lme_time, toc(start_time)];
objkt = [objkt, obj];
end;
objkt,

[objbd; objkt],
[las_time; lme_time],

P = msdp(min(f),[K, f-objkt(4)<=0], 4);
[sta, obj ] = msol(P);

\fi
%%%%%%%%%%%%%%%%%%%%%%%%%%%%%%%%%%%%%%%
\end{exm}

\begin{exm}\label{exmp:09}
Consider the optimization problem
\[
\left\{ \baray{rl}
\min\limits_{x\in\re^4} &  (x_1+ x_2 + x_3 + x_4 + 1)^2
          -4(x_1x_2 +x_2x_3 +x_3x_4 + x_4 + x_1) \\
\mbox{s.t.} &   0 \leq  x_1, \ldots, x_4 \leq  1.
\earay \right.
\]
The matrix $L(x)$ is given in Example~\ref{exm:box}.
The objective is the dehomogenization of Horn's form \cite{Rez00}.
The feasible set is compact, so $f_{\min}$
is achieved at a critical point.
The condition~ii) of Theorem~\ref{thm:pf:tight}
is satisfied.\footnote{
Note that $4-{\sum}_{i=1}^4 x_i^2 = {\sum}_{i=1}^4
\Big( x_i (1-x_i)^2 + (1-x_i) (1+x_i^2)  \Big)
\in \mbox{IQ}(c_{eq}, c_{in}).$
}
The minimum value $f_{\min}=0$.
For each $t\in [0,1]$, the point $(t,0,0,1-t)$ is a global minimizer.
The computational results for standard Lasserre's relaxations
and the new ones \reff{momrlx:k}-\reff{rlxsos:k} are in Table~\ref{tab:emp:09}.
\begin{table}[htb]
\centering
\caption{Computational results for Example~\ref{exmp:09}.}
\label{tab:emp:09}
\begin{tabular}{|c|c|c|c|c|} \hline
\multirow{2}{*}{order} &  \multicolumn{2}{c|}{w./o. L.M.E.}
         &  \multicolumn{2}{c|}{with L.M.E.}   \\ \cline{2-5}
  &  lower bound &  time  &  lower bound &  time   \\ \hline
2 &  $-0.0279$     &  $0.2262$    &  $-5 \cdot 10^{-6}$    &  $1.1835$   \\   \hline
3 &  $-0.0005$     &  $0.4691$    &  $-6 \cdot 10^{-7}$    &  $1.6566$   \\   \hline
4 &  $-0.0001$     &  $3.1098$    &  $-2 \cdot 10^{-7}$    &  $5.5234$   \\   \hline
5 &  $-4 \cdot 10^{-5}$   & $16.5092$     & $-6 \cdot 10^{-7}$ & $19.7320$   \\   \hline
\end{tabular}
\end{table}
%%%%%%%%%%%%%%%%%%%%%%%%%%%%%%%%%%%%%%%%%
%% the following is a Singular code
\iffalse

clear all,
syms x_1 x_2  x_3  x_4;
x = [x_1  x_2  x_3  x_4].';
ccxx = [x_1  x_2  x_3  x_4  1-sum(x)];
matLx = left_inv_KKTx(ccxx, x, 1, 1),
latex(matLx),

clear all,
mpol x 4;
f  = ( sum(x)+1 )^2 -4*( x(1)*x(2) + x(2)*x(3) + x(3)*x(4) + x(4) + x(1) ) ;
K = [x'>=0, 1-x'>=0];
objbd =[];
tic; las_time = [];
for ord = 1 : 5
P = msdp(min(f),K, ord);
used_time = tic;
[sta, obj ] = msol(P);
las_time = [las_time, toc(used_time)];
objbd = [objbd, obj];
end;
objbd,

gradf = diff(f,x)';
lmd = [gradf-x.*gradf;-x.*gradf];
cx = [x' 1-x'];
KT = [K, lmd'>=0];
start_time = tic;
P = msdp(min(f),KT, 1);
lme_time = toc(start_time);
[sta, objkt ] = msol(P);
gfkt = diff(f, x);
for k = 1 : 8
gfkt = gfkt - lmd(k)*diff(cx(k),x);
end
KT = [K, lmd'>=0, 1*( lmd' .* cx )==0,  1*gfkt ==0];
for ord = 2 : 5
P = msdp(min(f),KT, ord);
start_time = tic;
[sta, obj ] = msol(P);
lme_time = [lme_time, toc(start_time)];
objkt = [objkt, obj];
end;
objkt,

[objbd; objkt],
[las_time; lme_time],

\fi
%%%%%%%%%%%%%%%%%%%%%%%%%%%%%%%%%%%%%%%
\end{exm}

For some polynomial optimization problems,
the standard Lasserre relaxations might converge fast,
e.g., the lowest order relaxation may often be tight.
For such cases, the new relaxations~\reff{momrlx:k}-\reff{rlxsos:k}
have the same convergence property,
but might take more computational time.
The following is such a comparison.

\begin{exm} \label{exmp:10}
Consider the optimization problem ($x_0 :=1$)
\[
\left\{ \baray{rl}
\min\limits_{x\in\re^n} &  \sum_{0 \leq i \le j \le j \le n}
c_{ijk} x_i x_j x_k \\
\mbox{s.t.} &   0 \leq  x\leq  1,
\earay \right.
\]
where each coefficient $c_{ijk}$ is randomly generated (by {\tt randn} in MATLAB).
The matrix $L(x)$ is the same as in Example~\ref{exm:box}.
Since the feasible set is compact, we always have $f_c = f_{\min}$.
The condition~ii) of Theorem~\ref{thm:pf:tight} is satisfied,
because of box constraints.
For this kind of randomly generated problems,
standard Lasserre's relaxations are often tight for the order $k=2$,
which is also the case for the new relaxations \reff{momrlx:k}-\reff{rlxsos:k}.
Here, we compare the computational time that is consumed
by standard Lasserre's relaxations and \reff{momrlx:k}-\reff{rlxsos:k}.
The time is shown (in seconds) in Table~\ref{tab:emp:10}.
For each $n$ in the table, we generate $10$ random instances
and we show the average of the consumed time.
For all instances, standard Lasserre's relaxations
and the new ones~\reff{momrlx:k}-\reff{rlxsos:k}
are tight for the order $k=2$,
while their time is a bit different.
\begin{table}[htb]
\centering
\caption{Consumed time (in seconds) for Example~\ref{exmp:10}.}
\label{tab:emp:10}
\begin{tabular}{|r|c|c|c|c|c|c|c|c|} \hline
$n$ \qquad \,  &9     & 10    & 11   &12     &13     &14   \\   \hline
w./o.~L.M.E.   &1.2569&2.5619 &6.3085&15.8722&35.1675&78.4111 \\   \hline
with\,~L.M.E.  &1.9714&3.8288 &8.2519&20.0310&37.6373&82.4778 \\   \hline
\end{tabular}
\end{table}
We can observe that \reff{momrlx:k}-\reff{rlxsos:k}
consume slightly more time.
%%%%%%%%%%%%%%%%%%%%%%%%%%%%%%%%%%%%%%%%%
%% the following is a Singular code
\iffalse

clear all, clc
mpol x 9;
n = length(x);
las_time = []; objbd =[];
lme_time = []; objkt =  [];
repeat = 0;
while( repeat < 10 )
repeat = repeat + 1;
%
%
f  = randn(1, binom(n+3,3) )*mmon(x, 3);
K = [x'>=0, 1-x'>=0];
tic;
P = msdp(min(f),K, 2);
used_time = tic;
[sta, obj ] = msol(P);
las_time = [las_time, toc(used_time)];
objbd = [objbd, obj];
%%%%%%
gradf = diff(f,x)';
lmd = [gradf-x.*gradf;-x.*gradf];
cx = [x' 1-x'];
KT = [K, lmd'>=0];
start_time = tic;
gfkt = diff(f, x);
for k = 1 : 2*n
gfkt = gfkt - lmd(k)*diff(cx(k),x);
end
KT = [K, lmd'>=0,  lmd' .* cx==0,  gfkt ==0];
P = msdp(min(f),KT, 2);
start_time = tic;
[sta, obj ] = msol(P);
lme_time = [lme_time, toc(start_time)];
objkt = [objkt, obj];
%
%
end %% while
[objbd; objkt],
[las_time; lme_time],
[ mean(las_time); mean(lme_time) ]

\fi
%%%%%%%%%%%%%%%%%%%%%%%%%%%%%%%%%%%%%%%
\end{exm}

\section{Discussions}
\label{sc:dis}

\subsection{Tight relaxations using preorderings}
\label{ssc:prod}

When the global minimum value $f_{\min}$ is achieved at a critical point,
the problem \reff{pop:c(x)>=0} is equivalent to \reff{g:kkt:opt}.
We proposed relaxations \reff{momrlx:k}-\reff{rlxsos:k}
for solving \reff{g:kkt:opt}. Note that
\[
\mbox{IQ}(c_{eq},c_{in})_{2k} + \mbox{IQ}(\phi,\psi)_{2k} =
\mbox{Ideal}(c_{eq},\phi)_{2k} + \mbox{Qmod}(c_{in},\psi)_{2k}.
\]
If we replace the quadratic module $\mbox{Qmod}(c_{in},\psi)$ by
the preordering of $(c_{in},\psi)$ \cite{LasBok,Lau},
we can get further tighter relaxations.
For convenience, write $(c_{in},\psi)$ as a single tuple
$
(g_1, \ldots, g_\ell).
$
Its preordering is the set
\[
\mbox{Preord}(c_{in},\psi) \, := \,
\sum_{ r_1, \ldots, r_\ell \in \{0, 1\}  }
g_1^{r_1}\cdots g_\ell^{r_\ell}  \Sig[x].
\]
The truncation $\mbox{Preord}(c_{in},\psi)_{2k}$
is similarly defined like $\mbox{Qmod}(c_{in},\psi)_{2k}$
in Section~\ref{sc:pre}.
A tighter relaxation than \reff{momrlx:k}, of the same order $k$, is
\be \label{kmom:pord}
\left\{\baray{rl}
f_k^{\prm, pre}  := \min & \langle f, y \rangle \\
 s.t. & \langle 1, y \rangle = 1, \, L_{c_{eq}}^{(k)}(y) = 0, \, L_{\phi}^{(k)}(y) = 0,  \\
      &  L_{ g_1^{r_1}\cdots g_\ell^{r_\ell} }^{(k)}(y) \succeq 0 \quad
      \forall \, r_1,\ldots, r_\ell \in \{0, 1\}, \\
      &  y \in \re^{ \N_{2k}^n }.
\earay \right.
\ee
Similar to \reff{rlxsos:k}, the dual optimization problem of the above is
\be \label{ksos:pre}
\left\{\baray{rl}
f_k^{pre} := \, \max & \gamma  \\
 s.t. &  f-\gamma \in \mbox{Ideal}(c_{eq},\phi)_{2k} +
             \mbox{Preord}(c_{in},\psi)_{2k}.
\earay \right.
\ee
An attractive property of the relaxations
\reff{kmom:pord}-\reff{ksos:pre} is that:
the conclusion of Theorem~\ref{thm:pf:tight}
still holds, even if none of the conditions i)-iii) there is satisfied.
This gives the following theorem.

\begin{theorem}  \label{thm:tight:pre}
Suppose $\mc{K}_c \ne \emptyset$ and Assumption~\ref{ass:lmd:p} holds.
Then, \[ f_k^{pre}  = f_k^{\prm,pre}  = f_c \]
for all $k$ sufficiently large.
Therefore, if the minimum value $f_{\min}$ of \reff{pop:c(x)>=0}
is achieved at a critical point,
then $f_k^{pre}  = f_k^{\prm,pre}  = f_{\min}$ for all $k$ big enough.
\end{theorem}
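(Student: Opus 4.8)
The plan is to adapt the argument of \textbf{Case III} in the proof of Theorem~\ref{thm:pf:tight}, with one structural change: replacing the quadratic module $\mbox{Qmod}(c_{in},\psi)$ by the preordering $\mbox{Preord}(c_{in},\psi)$ removes the need for the separating polynomial $\rho$ of Assumption~\ref{ass:sig} (and for any archimedeanness). The point is that, unlike a single-inequality description, the full preordering already encodes the \emph{entire} basic closed semialgebraic description of the feasible critical set, so the Positivstellensatz can be applied to it directly. Throughout, $f_k^{pre} \leq f_k^{\prm,pre}$ holds by weak SDP duality and both sequences are monotonically increasing in $k$, so it suffices to produce one order $k_5$ with $f_{k_5}^{pre} \geq f_c$ and to note $f_k^{\prm,pre} \leq f_c$.

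Set $\mc{K}_c^S := \mc{K}_c \cap S$, where $S = \{x\in\re^n: c_{in}(x)\geq 0,\ \psi(x)\geq 0\}$; by Assumption~\ref{ass:lmd:p} this is exactly the feasible set of \reff{g:kkt:opt}. If $\mc{K}_c^S = \emptyset$, i.e. $f_c = +\infty$, then by the Positivstellensatz \cite[Corollary~4.4.3]{BCR} one has $-1 \in \mbox{Ideal}(c_{eq},\phi) + \mbox{Preord}(c_{in},\psi)$, so \reff{ksos:pre} is unbounded above and \reff{kmom:pord} is infeasible for all large $k$, consistent with the convention $f_k^{\prm,pre}=f_k^{pre}=+\infty = f_c$. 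So assume $f_c < +\infty$. By Lemma~3.3 of \cite{DNP}, $f$ takes finitely many values $v_1 < \cdots < v_N$ on $\mc{K}_c$; after shifting $f$ by a constant we may assume $f_c = 0$, and $f_c$ equals one of them, say $v_t = f_c$. Since every point of $\mc{K}_c^S$ is a feasible critical point, $f \geq f_c$ on $\mc{K}_c^S$, so $f$ takes only the values $v_t,\dots,v_N$ there. Choose univariate polynomials $\varphi_1,\dots,\varphi_N$ with $\varphi_i(v_j)=0$ for $i\neq j$ and $\varphi_i(v_j)=1$ for $i=j$, and set
\[
s := \sum_{i=t}^N (v_i - f_c)\,\varphi_i(f)^2 \in \Sig[x], \qquad \hat f := f - f_c - s .
\]
Then $\hat f \equiv 0$ on $\mc{K}_c^S = \{c_{eq}=0,\ \phi=0,\ c_{in}\geq 0,\ \psi\geq 0\}$: on this set $f=v_j$ with $j\geq t$, hence $s = v_j - f_c$ and $\hat f = 0$.

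Now apply the Positivstellensatz \cite[Corollary~4.4.3]{BCR} to $\hat f$ and the above semialgebraic description: there exist an integer $\ell>0$ and $q \in \mbox{Preord}(c_{in},\psi)$ with
\[
\hat f^{\,2\ell} + q \ \in\ \mbox{Ideal}(c_{eq},\phi).
\]
From here the estimates of \textbf{Case III} go through with $\mbox{Qmod}$ replaced by $\mbox{Preord}$: for $\tau \geq \tfrac{1}{2\ell}$ and all $\eps>0$ write $\hat f + \eps = \phi_\eps + \theta_\eps$, where $\phi_\eps = -\tau\eps^{1-2\ell}(\hat f^{\,2\ell}+q)$ and $\theta_\eps = \eps\bigl(1 + \hat f/\eps + \tau(\hat f/\eps)^{2\ell}\bigr) + \tau\eps^{1-2\ell} q$. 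By Lemma~2.1 of \cite{Nie-rVar} there is a single order $k_5$, independent of $\eps$, with $\phi_\eps \in \mbox{Ideal}(c_{eq},\phi)_{2k_5}$ and $\theta_\eps \in \mbox{Preord}(c_{in},\psi)_{2k_5}$; enlarging $k_5$ so that $s \in \Sig[x]_{2k_5}$, we get $f - (f_c - \eps) = \phi_\eps + (\theta_\eps + s)$ with $\theta_\eps + s \in \mbox{Preord}(c_{in},\psi)_{2k_5}$. Hence $\gamma = f_c - \eps$ is feasible for \reff{ksos:pre} at order $k_5$, so $f_{k_5}^{pre} \geq f_c - \eps$ for every $\eps>0$, i.e. $f_{k_5}^{pre} \geq f_c$. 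Conversely, for a minimizer $u$ of \reff{g:kkt:opt} the tms $[u]_{2k}$ is feasible for \reff{kmom:pord} with value $f(u) = f_c$, so $f_k^{\prm,pre} \leq f_c$ for all $k$. Monotonicity then yields $f_k^{pre} = f_k^{\prm,pre} = f_c$ for all $k$ large, and the final claim follows since $f_c = f_{\min}$ whenever $f_{\min}$ is attained at a critical point.

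The main obstacle is that the two ``easy'' routes of Theorem~\ref{thm:pf:tight} (Cases~I and~II, via archimedeanness and Putinar's theorem) are unavailable here, so the whole argument must be routed through the Positivstellensatz as in Case~III; the delicate part is keeping all degrees uniform in the perturbation parameter $\eps$, which is exactly what Lemma~2.1 of \cite{Nie-rVar} supplies, together with the observation that the interpolation term $s$ forces $\hat f$ to vanish on the \emph{entire} feasible critical set $\mc{K}_c^S$, so that no separating hypersurface (Assumption~\ref{ass:sig}) is needed once one works with the preordering rather than the quadratic module.
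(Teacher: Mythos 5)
Your proof is correct and follows essentially the same route as the paper: the paper's own proof simply says to repeat \textbf{Case III} of Theorem~\ref{thm:pf:tight}, observing that $\hat f\equiv 0$ on the full set $\{c_{eq}=0,\,\phi=0,\,c_{in}\ge 0,\,\psi\ge 0\}$ so the Positivstellensatz yields $\hat f^{2\ell}+q\in\mbox{Ideal}(c_{eq},\phi)$ with $q\in\mbox{Preord}(c_{in},\psi)$, after which the $\eps$-uniform degree estimates via Lemma~2.1 of \cite{Nie-rVar} go through unchanged. You have in fact filled in more detail than the paper (e.g.\ the infeasible case and why $f\ge f_c$ on the feasible critical set), all of it accurate.
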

\begin{proof}
The proof is very similar to the {\bf Case III} of Theorem~\ref{thm:pf:tight}.
Follow the same argument there. Without Assumption~\ref{ass:sig},
we still  have $\hat{f}(x) \equiv 0$ on the set
\[
\mc{K}_3 \, := \, \{ x \in \re^n \, \mid \,
c_{eq}(x) = 0, \, \phi(x) = 0, \, c_{in}(x) \geq 0, \, \psi(x) \geq 0 \}.
\]
By the Positivstellensatz,
there exists an integer $\ell >0$ and
$q \in \mbox{Preord}(c_{in},\psi)$ such that
$
\hat{f}^{2\ell} + q  \in \mbox{Ideal}(c_{eq},\phi).
$
The resting proof is the same.
\end{proof}

\subsection{Singular constraining polynomials}

As shown in Proposition~\ref{pro:L(x)W(x)=Idt},
if the tuple $c$ of constraining polynomials is nonsingular,
then there exists a matrix polynomial $L(x)$ such that
$
L(x) C(x) = I_m.
$
Hence, the Lagrange multiplier $\lmd$
can be expressed as in \reff{lmd=L1*gf(x)}.
However, if $c$ is not nonsingular, then such $L(x)$ does not exist.
For such cases, how can we express $\lmd$ in terms of $x$
for critical pairs $(x,\lmd)$?
This question is mostly open, to the best of the author's knowledge.

\subsection{Degree bound for $L(x)$}

For a nonsingular tuple $c$ of constraining polynomials,
what is a good degree bound for $L(x)$
in Proposition~\ref{pro:L(x)W(x)=Idt}?
When $c$ is linear, a degree bound is given
in Proposition~\ref{pr:H:Lx}. However, for nonlinear $c$,
an explicit degree bound is not known.
Theoretically, we can get a degree bound for $L(x)$.
In the proof of Proposition~\ref{pro:L(x)W(x)=Idt},
the Hilbert's Nullstellensatz is used for $t$ times.
There exists sharp degree bounds for Hilbert's Nullstellensatz~\cite{Kollar88}.
For each time of its usage, if the degree bound in \cite{Kollar88}
is used, then a degree bound for $L(x)$
can be eventually obtained. However, such obtained bound is enormous,
because the one in \cite{Kollar88} is already exponential
in the number of variables.
An interesting future work is to get
a useful degree bound for $L(x)$.

\subsection{Rational representation of Lagrange multipliers}

In \reff{C(x)*lmd=gf(x)}, the Lagrange multiplier vector
$\lmd$ is determined by a linear equation.
Naturally, one can get
\[
\lmd \, = \, \Big( C(x)^T C(x) \Big)^{-1} C(x)^T \bbm \nabla f(x) \\ 0 \ebm,
\]
when $C(x)$ has full column rank.
This rational representation is expensive for usage,
because its denominator
%$\det \big( C(x)^T C(x) \big)$,
is typically a high degree polynomial.
However, $\lmd$ might have
rational representations other than the above.
Can we find a rational representation
whose denominator and numerator have low degrees?
If this is possible, the methods for optimizing rational
functions \cite{BHL16,JdeK06,NDG08} can be applied.
This is an interesting question for future research.

\bigskip \noindent
{\bf Acknowledgement} \,
The author was partially supported by the NSF grants
DMS-1417985 and DMS-1619973.
He would like to thank the anonymous referees 
for valuable comments on improving the paper.


\begin{thebibliography}{99}

%
%\bibitem{AlgRAG}
%S.~Basu, R.~Pollack, and M.-F. Roy.
%{\it Algorithms in real algebraic geometry.} Series:
%Algorithms and Computation in Mathematics, Vol~10.  Springer-Verlag, 2003.
%
%

\bibitem{Brks}
D.~Bertsekas.
{\it Nonlinear programming}, second edition.
Athena Scientific, 1995.



\bibitem{BCR}
J. Bochnak, M. Coste and M-F. Roy.
{\it Real algebraic geometry}, Springer, 1998.


\bibitem{BHL16}
F.~Bugarin, D.~Henrion, and J.~Lasserre.
Minimizing the sum of many rational functions.
{\it  Math. Programming Comput.} \, 8(2016), \,  pp. 83--111.

%
%\bibitem{Conway}
%Conway, John B.
%A course in Functional Analysis,
%Springer-Verlag, 1990, Second Edition.
%



\bibitem{CLO97}
D.~Cox, J.~Little and D.~O'Shea.
{\it Ideals, varieties, and algorithms.
An introduction to computational algebraic geometry and commutative algebra.}
Third edition. Undergraduate Texts in Mathematics. Springer, New York, 1997.

%
%\bibitem{CLO98}
%D.~Cox, J.~Little and D.~O'Shea.
%{\it Using algebraic geometry}. Graduate Texts in Mathematics,
%185. Springer-Verlag, New York, 1998.
%



\bibitem{CuFi05}
R. Curto and L. Fialkow.
Truncated K-moment problems in several variables.
{\it Journal of Operator Theory},  54(2005), pp. 189-226.




\bibitem{DNP}
J. Demmel, J.Nie and V. Powers.
Representations of positive polynomials on
non-compact semialgebraic sets via KKT ideals.
{\it Journal of Pure and Applied Algebra},
209 (2007), no.~1, pp. 189--200.



\bibitem{dKlLau11}
E.~de Klerk and M.~Laurent.
On the Lasserre hierarchy of semidefinite programming relaxations
of convex polynomial optimization problems,
{\it SIAM Journal On Optimization}, 21 (2011), pp. 824-832.



\bibitem{dKLLS15}
E.~de~Klerk, J.~Lasserre, M.~Laurent and Z.~Sun.
Bound-constrained polynomial optimization using only elementary calculations,
{\it Mathematics of Operations Research}, to appear.
%\url{arXiv:1507.04404 [math.OC]}



\bibitem{dKLS16}
E.~de~Klerk, M.~Laurent and Z.~Sun.
Convergence analysis for Lasserre's measure-based
hierarchy of upper bounds for polynomial optimization,
{\it Mathematical Programming}, to appear.



\bibitem{dKHL16}
E.~de~Klerk, R.~Hess and M.~Laurent.
Improved convergence rates for Lasserre-type hierarchies
of upper bounds for box-constrained polynomial optimization.
{\it Preprint}, 2016.
\url{arXiv:1603.03329 [math.OC]}


%
%\bibitem{DeSLim08}
%V. De Silva and L.-H. Lim. Tensor rank and the ill-posedness of the
%best low-rank approximation problem. {\em SIAM Journal on Matrix
%Analysis and Applications}, 30, no. 3, 1084-1127, 2008.
%


%
%
%\bibitem{GKZ}
%I. Gel’fand, M. Kapranov, and A. Zelevinsky.
%{\it Discriminants, resultants, and multidimensional determinants.}
%Mathematics: Theory \& Applications, Birkh\"{a}user, 1994.
%
%
%
%
%\bibitem{GrSED11}
%A.~Greuet and M.~Safey El Din.
%Deciding reachability of the infimum of a multivariate polynomial.
%{\it Proceedings of ISSAC}, 131-138, 2011.
%


\bibitem{GPsig}
G.-M. Greuel and G. Pfister,
{\it A Singular introduction to commutative algebra},
Springer, 2002.



%%
%\bibitem{GEZ10}
%F.~Guo, M.~Safey El Din, and L.~Zhi.
%Global optimization of polynomials using generalized critical values
%and sums of squares. {\it Proceedings of ISSAC}, pp. 107--114, 2010.
%
%
%\bibitem{Har}
%J. Harris.
%{\it Algebraic Geometry, A First Course}. Springer Verlag, 1992.
%
%
%
%\bibitem{Hrtsn}
%R.~Hartshorne. {\it  Algebraic geometry.}
%Graduate Texts in Mathematics, No. 52. Springer-Verlag, New York, 1977.
%


\bibitem{HLNZ}
S. He, Z. Luo, J.~Nie and S. Zhang.
Semidefinite relaxation bounds for indefinite homogeneous quadratic optimization.
{\it SIAM J. Optim.,} \, 19 (2), \, pp. 503--523.




\bibitem{GloPol3}
D.~Henrion, J.~Lasserre and J.~Loefberg.
{\it GloptiPoly 3: moments, optimization and semidefinite programming}.
{\it Optimization Methods and Software},  24 (2009), no. 4-5, pp. 761--779.

%
%
%\bibitem{HenLas03}
%D.~Henrion and J.B.~Lasserre.
%GloptiPoly : Global Optimization over Polynomials with Matlab and SeDuMi.
%{\it ACM Trans. Math. Soft.} 29,  pp. 165--194, 2003.
%
%


\bibitem{HenLas05}
D.~Henrion and J.B.~Lasserre.
Detecting global optimality and extracting solutions in GloptiPoly.
{\it Positive polynomials in control},  293--310,
Lecture Notes in Control and Inform. Sci., 312,
Springer, Berlin, 2005.


%
%\bibitem{HL06}
%D.~Henrion and J.~Lasserre.
%Convergent relaxations of polynomial matrix inequalities and static output feedback.
%{\it IEEE Trans. Auto. Control},\, {\bf 51}:192--202, 2006.
%


%
%\bibitem{HQ12}
%S.~Hu and L.~Qi.
%Algebraic connectivity of an even uniform hypergraphs.
%{\it Journal of Combinatorial Optimization}, 24 (2012), 564-579.
%


\bibitem{JdeK06}
D.~Jibetean and E.~de Klerk.
Global optimization of rational functions: A semidefinite programming approach.
{\it Mathematical Programming}, 106 (2006), no.~1, pp. 93--109.


%
%\bibitem{KlSc10}
%I.~Klep and M.~schweighofer.
%Pure states, positive matrix polynomials and sums of hermitian squares.
%{\it Indiana University Mathematics Journal} \, 59, No. 3, 857–874 (2010).
%


\bibitem{Kollar88}
J\'{a}nos Kollá\'{a}r.
Sharp effective Nullstellensatz.
{\it J. Amer. Math. Soc.~1} (1988),
no. 4, 963–975.



\bibitem{Las01}
J.B.~Lasserre. Global optimization with polynomials and the problem of moments.
{\it SIAM J. Optim.}, 11(3), pp. 796--817, 2001.


\bibitem{LLR08}
J.~Lasserre, M.~Laurent and P.~Rostalski.
Semidefinite characterization and computation of zero-dimensional real radical ideals.
{\it Foundations of Computational Mathematics}, \,
8(2008), \, no. 5, \, pp. 607--647.


\bibitem{Las11}
J.~Lasserre.
A new look at nonnegativity on closed sets and polynomial optimization,
{\it SIAM J. Optim.},\, 21 (2011), \, pp. 864--885.


\bibitem{Las09}
J.B.~Lasserre. Convexity in semi-algebraic geometry
and polynomial optimization.
{\it SIAM J. Optim.},\, 19 (2009), \,  pp. 1995-2014.


\bibitem{LasBok}
J.B.~Lasserre.
{\it Moments, positive polynomials and their applications},
Imperial College Press, 2009.


\bibitem{LBK15}
J.B.~Lasserre.
{\it Introduction to polynomial and semi-algebraic optimization},
Cambridge University Press, Cambridge, 2015.



\bibitem{LTY15}
J.B.~Lasserre, K.C.~Toh and S.~Yang.
A bounded degree SOS hierarchy for polynomial optimization.
{\it Euro. J. Comput. Optim.}, to appear.


\bibitem{Lau07}
M. Laurent.
Semidefinite representations for finite varieties.
{\it Mathematical Programming},  109 (2007), pp. 1--26.



\bibitem{Lau}
M.~Laurent.
Sums of squares, moment matrices and optimization over polynomials.
{\it Emerging Applications of Algebraic Geometry, Vol. 149 of
IMA Volumes in Mathematics and its Applications}
(Eds. M. Putinar and S. Sullivant), Springer, pages 157-270, 2009.


\bibitem{LauICM}
M.~Laurent.
Optimization over polynomials: Selected topics,
{\it Proceedings of the International Congress of Mathematicians}, 2014.


%
%\bibitem{LQY13}
%G.~Li, L.~Qi and G.~Yu.
%The Z-eigenvalues of a symmetric tensor and its applications to
%spectral hypergraph theory.
%{\it Numerical Linear Algebra with Applications}, 20 (2013), 1001-1029.
%

%
%\bibitem{YALMIP}
%J. L\"{o}fberg. YALMIP: a toolbox for modeling and optimization in Matlab.
%{\it Proc. IEEE CACSD Symposium}, Taiwan, 2004.
%\url{www.control.isy.liu.se/~johanl}
%


%
%\bibitem{MPECbook}
%Z.~Luo, J.~Pang, and D.~Ralph.
%{\it Mathematical programs with equilibrium constraints}.
%Cambridge University Press, Cambridge, 1996.
%

%
%
%\bibitem{Mar06}
%M. Marshall.
%Representation of non-negative polynomials with finitely many zeros.
%{\it Annales de la Faculte des Sciences Toulouse}, 15 (2006), pp. 599--609.
%
%
%
%\bibitem{MarBk}
%M. Marshall.
%{\it Positive polynomials and sums of squares}.
%Mathematical Surveys and Monographs, 146.
%American Mathematical Society, Providence, RI, 2008.
%
%%
%\bibitem{Mar09}
%M. Marshall.
%Representation of non-negative polynomials,
%degree bounds and applications to optimization.
%{\it Canad. J. Math.}, 61 (2009), pp. 205--221.
%
%
%
%\bibitem{MurKab}
%K.~G. Murty and S.~N. Kabadi.
%Some NP-complete problems in quadratic and nonlinear programming.
%{\it Mathematical Programming}, \, 39 (1987), pp.~117--129.
%
%

\bibitem{NDS}
J.~Nie, J.~Demmel and B.~Sturmfels.
Minimizing polynomials via sum of squares over the gradient ideal.
{\it Mathematical Programming}, Ser. A, 106 (2006), no. 3, pp. 587--606.


\bibitem{NDG08}
J.~Nie, J.~Demmel and M. Gu.
Global minimization of rational functions and the nearest GCDs.
{\it Journal of Global Optimization}, 40 (2008), no.~4, pp. 697--718.

%
%\bibitem{NieSwg}
%J.Nie and M. Schweighofer.
%On the complexity of putinar's positivstellensatz.
%{\it Journal of Complexity} \, 23(2007), pp.135-150.
%

%
%\bibitem{NR09}
%J.~Nie and K.~Ranestad.
%Algebraic degree of polynomial optimization.
%{\it SIAM Journal on Optimization}, Vol. 20, No. 1, pp. 485--502, 2009.
%


\bibitem{Nie-dis}
J.~Nie. Discriminants and Nonnegative Polynomials.
{\it Journal of Symbolic Computation},
Vol.~47, no.~2, pp.~167--191, 2012.



\bibitem{Nie-jac}
J.~Nie.
An exact Jacobian SDP relaxation for polynomial optimization.
{\it Mathematical Programming}, Ser. A, 137 (2013), pp. 225--255.


\bibitem{Nie-rVar}
J.~Nie.
Polynomial optimization with real varieties.
{\it SIAM J. Optim.}, Vol.~23, No.~3,
pp. 1634-1646, 2013.



\bibitem{Nie-ft}
J.~Nie.
Certifying convergence of Lasserre's hierarchy via flat truncation.
{\it Mathematical Programming},
Ser. A, 142 (2013), no.~1-2, pp. 485--510.


\bibitem{Nie-opcd}
J.~Nie.
Optimality conditions and finite convergence of Lasserre's hierarchy.
{\it Mathematical Programming}, Ser. A,
146 (2014), no.~1-2, pp.~97--121.


\bibitem{Nie-LinOpt}
J.~Nie. Linear optimization with cones of moments and nonnegative polynomials.
{\it Mathematical Programming},
Ser. B, 153 (2015), no. 1, pp. 247--274.


%
%\bibitem{Nie-atkmp}
%J.~Nie.
%The A-truncated K-moment problem.
%{\it Preprint}, \, 2012.
%


%
%\bibitem{ParMP}
%P.~Parrilo.
%Semidefinite programming relaxations for semialgebraic problems.
%{\it Math. Prog.},\,  Ser. B, Vol.~96, No.2, pp. 293-320, 2003.
%


\bibitem{PS03}
P. A.~Parrilo and B.~Sturmfels.
Minimizing polynomial functions. In S. Basu and L. Gonzalez-Vega, editors,
{\it Algorithmic and Quantitative Aspects of Real Algebraic
Geometry in Mathematics and Computer Science, volume
60 of DIMACS Series in Discrete Mathematics and
Computer Science}, pages 83-99. AMS, 2003.




\bibitem{Put}
M. Putinar.
Positive polynomials on compact semi-algebraic sets,
{\it Ind. Univ. Math. J.} \, 42 (1993), 969--984.


%
%\bibitem{Qi03}
%L.~Qi and K.~L. Teo.
%Multivariate polynomial minimization and its application in signal processing.
%{\it  Journal of Global Optimization},  26 (2003),  pp.~419--433.
%


%
%\bibitem{Qi10}
%L.~Qi,  G.~Yu,  and E.~X. Wu.
%Higher order positive semidefinite diffusion tensor imaging.
%{\it SIAM Journal on Imaging Sciences},  3 (2010),
%pp.~416--433.
%


\bibitem{Rez00}
B. Reznick. Some concrete aspects of Hilbert's $17^{th}$ problem.
In {\it Contemp. Math.},  Vol.~ 253, pp. 251-272. American
Mathematical Society, 2000.


%
%\bibitem{SED08}
%M.~Safey El Din.
%Computing the global optimum of a multivariate polynomial
%over the reals. {\it Proceedings of ISSAC}, 71-78, 2008.
%


%
%\bibitem{Sch05}
%C.~Scheiderer.
%Non-existence of degree bounds for weighted sums of squares representations.
%{\it Journal of Complexity}, \, 21, 823-844 (2005).
%
%\bibitem{Sch05JoA}
%C.~Scheiderer.	
%Distinguished representations of non-negative polynomials.
%{\it Journal of Algebra}, \,  289, 558-573 (2005).
%%
%
%\bibitem{Sch99}
%C.~Scheiderer.
%Sums of squares of  regular functions on real algebraic varieties.
%{\it Trans. Am. Math. Soc.}, \,  352, 1039-1069 (1999).
%


\bibitem{Schw06}
M.~Schweighofer.
Global optimization of polynomials using gradient tentacles and sums of squares.
{\it SIAM J. Optim.}, 17 (2006), no. 3, pp. 920--942.




\bibitem{Sch09}
C.~Scheiderer.
Positivity and sums of squares: A guide to recent results.
{\it Emerging Applications of Algebraic Geometry (M. Putinar, S. Sullivant, eds.)},
IMA Volumes Math. Appl. 149, Springer, 2009, pp. 271--324.


\bibitem{Sturm}
J.~F.~Sturm.
{\em SeDuMi 1.02: A MATLAB toolbox for optimization over symmetric cones},
Optim. Methods Softw., 11 \& 12 (1999), pp. 625--653.


\bibitem{Stu02}
B. Sturmfels. {\it Solving systems of polynomial equations}.
CBMS Regional Conference Series
in Mathematics, 97. American Mathematical Society, Providence, RI, 2002.



\bibitem{VuiSon}
H.~H.~Vui and P.~T.~ S\'{o}n.
Global optimization of polynomials using the
truncated tangency variety and sums of squares.
{\it SIAM J. Optim.}, 19 (2008), no.~2, pp. 941--951.

\end{thebibliography}
\end{document}